\documentclass[11pt]{article}
\usepackage[numbers,sort&compress]{natbib}
\usepackage{enumerate}
\usepackage{amscd}
\usepackage{amsmath}
\usepackage{latexsym}
\usepackage{amsfonts}
\usepackage{amssymb}
\usepackage{amsthm}
\usepackage{verbatim}
\usepackage{mathrsfs}
\usepackage{enumerate}
\usepackage{hyperref}

 \oddsidemargin .5cm \evensidemargin .5cm \marginparwidth 40pt
 \marginparsep 10pt \topmargin 0.30cm
 \headsep1pt
 \headheight 0pt
 \textheight 9.1in
 \textwidth 6in
 \sloppy

 \setlength{\parskip}{8pt}

\theoremstyle{plain}\newtheorem{definition}{Definition}[section]
\theoremstyle{definition}\newtheorem{theorem}{Theorem}[section]
\theoremstyle{plain}\newtheorem{lemma}[theorem]{Lemma}
\theoremstyle{plain}\newtheorem{coro}[theorem]{Corollary}
\theoremstyle{plain}\newtheorem{prop}[theorem]{Proposition}
\theoremstyle{remark}\newtheorem{remark}{Remark}[section]
\usepackage{xcolor}

\newcommand{\Div}{\mathrm{div}\,}
\newcommand{\B}{\Big}
\newcommand{\D}{\text{Div\,}}

\newcommand{\be}{\begin{equation}}
\newcommand{\ee}{\end{equation}}
 \newcommand{\ba}{\begin{aligned}}
 \newcommand{\ea}{\end{aligned}}

\newcommand{\fbxo}{\int_{\tilde{B}_{k}}\!\!\!\!\!\!\!\!\!\! -~\,}
\newcommand{\fbxozero}{\int_{\tilde{B}_{k_{0}}}\!\!\!\!\!\!\!\!\!\!\!\! -~\,}
\newcommand{\fbxozeroo}{\int_{B_{k_{0}}}\!\!\!\!\!\!\!\!\!\!\!\! -~\,}
\newcommand{\fbxoi}{\int_{\tilde{B}_{i}} \!\!\!\!\!\!\!\!\! -~\,}
\newcommand{\fbxoz}{\int_{\tilde{B}_{3}}\! \!\!\!\!\!\!\!\!\! -~\,}

\newcommand{\fbx}{\int_{B_{k}}\!\!\!\!\!\!\!\!\!\! -~}

\newcommand{\kfqRxt}{\iint_{Q_{R}(x,t)} \!\!\!\!\!\!\!\!\!\!\!\!\!\!\!\!\!\!\!\!\!\!\!-\hspace{-0.2cm}-\hspace{-0.3cm}\,~~~~~~}

\newcommand{\fqxoo}{\iint_{\tilde{Q}_{k}} \!\!\!\!\!\!\!\!\!\!\!\!\!\!-\hspace{-0.2cm}-\hspace{-0.3cm}-\,~}

\newcommand{\fqxoth}{\iint_{\tilde{Q}_{3}} \!\!\!\!\!\!\!\!\!\!\!\!\!\!-\hspace{-0.2cm}-\hspace{-0.3cm}-\,~}

\newcommand{\fqxomu}{\iint_{Q(\mu)} \!\!\!\!\!\!\!\!\!\!\!\!\!\!\!\!\!\!-\hspace{-0.2cm}-\hspace{-0.3cm}-\,~}
\newcommand{\fqxol}{\iint_{\tilde{Q}_{l}} \!\!\!\!\!\!\!\!\!\!\!\!\!-\hspace{-0.2cm}-\hspace{-0.3cm}-\,~}
\newcommand{\fqxolo}{\iint_{Q_{l}} \!\!\!\!\!\!\!\!\!\!\!\!\!-\hspace{-0.2cm}-\hspace{-0.3cm}-\,~}

\providecommand{\bysame}{\leavevmode\hbox to3em{\hrulefill}\thinspace}
  \newcommand{\f}{\frac}
    
  \newcommand{\ben}{\begin{enumerate}}
   \newcommand{\een}{\end{enumerate}}

\newcommand{\ti}{\nabla}

\newcommand{\Rmnum}[1]{\expandafter\@slowromancap\romannumeral #1@}

\allowdisplaybreaks

\numberwithin{equation}{section}
\begin{document}
\title{Partial regularity of suitable weak solutions to the multi-dimensional generalized magnetohydrodynamics equations}
\author{Wei Ren,\footnote{School of Mathematics and Systems Science, Beihang University, Beijing 100191, P. R. China Email:  renwei4321@163.com}~~\; Yanqing Wang\footnote{Department of Mathematics and Information Science, Zhengzhou University of Light Industry, Zhengzhou, Henan  450002,  P. R. China Email: wangyanqing20056@gmail.com}\; and  Gang Wu\footnote{School of Mathematical Sciences,  University of Chinese Academy of Sciences, Beijing 100049, P. R. China Email: wugangmaths@gmail.com}}
\date{}
\maketitle
\begin{abstract}
In this paper, we are concerned with the partial regularity
of the  suitable weak solutions to the fractional MHD equations in $\mathbb{R}^{n}$ for $n=2,\,3$.
In comparison with the work of
the 3D fractional Navier-Stokes equations
obtained by  Tang and  Yu in
\cite[Commun. Math. Phys. 334: 1455--1482, 2015]{[TY1]}, our results include their   endpoint case $\alpha=3/4$ and the external force belongs to more general parabolic Morrey space. Moreover,
 we prove some interior regularity criteria just via the scaled mixed norm of the  velocity   for the suitable weak solutions to the fractional MHD equations.
 \end{abstract}
\noindent {\bf MSC(2000):}\quad 76D03, 76D05, 35B33, 35Q35 \\\noindent
{\bf Keywords:} Magnetohydrodynamics equations; Suitable weak solutions; Partial regularity \\
\section{Introduction}
\label{intro}
\setcounter{section}{1}\setcounter{equation}{0}
We consider  the following generalized incompressible magnetohydrodynamics (MHD)  equations in $\mathbb{R}^{n}$ ($n=2,\,3$)
\be\left\{\ba\label{GMHD}
&u_{t}+ (-\Delta)^{\alpha} u+ u\cdot\ti
u -h\cdot\ti h +\nabla p= f, \\
&h_{t}+(-\Delta)^{\beta} h + u\cdot\ti h-h\cdot\ti u= 0,\\
&\Div u=\Div h=0,\\
&(u,h)|_{t=0}=(u_0,h_0),
\ea\right.\ee
 where $u,\,h$ describe the flow  velocity field and the magnetic field, respectively, the scalar function $p=\pi+\f{1}{2}h^{2}$ stands for the total pressure and the external force  is denoted by  $f$	
with $\Div f=0$.
The fractional Laplacian $(-\Delta)^{\alpha}$ as the infinitesimal generator of a L\'{e}vy process is defined by $\widehat{(-\Delta)^{\alpha}f}(\xi)=|\xi|^{2\alpha}\hat{f}(\xi)$
,   where $\hat{f}(\xi)=\frac{1}{(2\pi)^{n}}\int_{\mathbb{R}^{n}}f (x)e^{-i\xi\cdot x}\,dx$.
 The  initial data $(u_{0},h_{0})$ satisfies $\Div u_{0}=\Div h_{0}=0$.

When $\alpha=\beta=1$, the system \eqref{GMHD} reduces to the classical MHD equations. MHD equations play an important role in electrically conducting fluids such as plasmas(see, e.g.\cite{[Biskamp]}).
There have been extensive
studies
on various topics concerning the MHD system and 
 fractional MHD equations (see, e.g., \cite{[TYZ],[CWY],[JZ],[HX1],[JW1],[PPS],[HX2],[Wu1],[Wu2],[Wu3],[CMZ],[DL],[RW],[Hasegawa],[WZ]} and  references therein).
 MHD equations without the the magnetic field degenerate to the Navier-Stokes equations. It is well-known that both the Navier-Stokes system and the Euler system in $\mathbb{R}^{2}$ are globally
well-posed.  Sermange and   Teman \cite{[ST]} showed that the weak solutions of the 2D MHD system are regular. Very recently, the 2D generalized MHD system has been mathematically investigated in
several works (see, e.g., \cite{[TYZ],[CWY],[JZ],[Wu3]}). However, due to strong coupling between the magnetic field and the velocity field, to our knowledge, whether smooth solutions of the 2D   MHD equations with fractional power dissipation $\alpha=\beta<1$ break down in a finite time
remains   open. In  \cite{[Wu3]},   the global smooth solutions   of  2D generalized MHD  with $\beta<1$ and $\alpha>1$ were established
by  Wu. One goal of this paper is to prove partial regularity of solutions satisfying local energy inequality to the 2D fractional MHD system for  $1/2<\alpha=\beta<1$.

The global weak solutions and the local strong solutions  to the 3D MHD equations were constructed by
 Duvaut  and   Lions \cite{[DL]}, and  Sermange and Teman \cite{[ST]}. Regularity  criteria of weak solutions to the 3D MHD equations only in terms of velocity field were proved in \cite{[HX1],[CMZ]}. Partial regularity of suitable weak solutions to the 3D MHD equations was investigated by  He and  Xin in \cite{[HX2]} (see also \cite{[JW1]}). The interior regularity criteria are shown for the suitable weak solutions via   the velocity field
with sufficiently small local scaled norm and the  magnetic field    with bounded local scaled norm in \cite{[HX2]}.
Very recently,  Wang and  Zhang \cite{[WZ]}  removed    the  magnetic field   hypothesis for the    regularity criteria for the suitable weak solutions to the 3D MHD equations.
These results indicate that the velocity field
  plays a more dominant role than the magnetic field  on the regularity of solutions to the
magnetohydrodynamic equations, which is  consistent with the numerical simulations in \cite{[Hasegawa],[PPS]}.

The partial regularity of suitable weak solutions to the 3D MHD equations obtained in \cite{[HX2]} is an analogue of the celebrated Caffarelli-Kohn-Nirenberg theorem to the 3D Navier-Stokes equations, namely, one dimensional Hausdorff measure of the set of
the possible space-time singular points of suitable weak solutions to the system is zero.  The partial regularity of weak solutions obeying the local energy inequality to the 3D Navier-Stokes equations was
originated from Scheffer \cite{[Scheffer1],[Scheffer2],[Scheffer3]}. The optimal
Hausdorff  dimension estimate of
the possible space-time singular points set of suitable weak solutions to the 3D Navier-Stokes was obtained by  Caffarelli,  Kohn and  Nirenberg in \cite{[CKN]}.
Since then, there have been extensive
studies on the partial regularity    of   solutions to the  Navier-Stokes equations, MHD equations and the related models  with fractional dissipation (see, e.g., \cite{[Lin],[WW],[HX2],[TX],[KP],[JW2],[GKT],[WZ],[TY2],[TY3],[JW1],[TY1],[RW]}).
In particular,  Katz and  Pavlovi\'c \cite{[KP]} proved that
the Hausdorff dimension of the singular set for generalized Navier-Stokes equations
with $1<\alpha<5/4$ at the first blow-up time
is at most  $5-4\alpha$, which was extended to the
generalized MHD equations in \cite{[RW]}.  It is shown in \cite{[JW2]} that the $(5-4\alpha)/2\alpha$ dimensional Hausdorff measure of
  possible time singular  points of  weak solutions to the 3D fractional Navier-Stokes equations on the interval $(0,\infty)$
is zero if $5/6\leq\alpha<5/4$.
Very recently,  Tang and  Yu \cite{[TY3]}
showed that the solutions of the 3D stationary fractional Navier-Stokes equations are regular away from a compact set whose $(5-6\alpha)$-Hausdoff measure is zero in the case
$1/2 <\alpha< 5/6$.

 Based on
 Caffarelli and  Silvestre's extension for the fractional Laplacian operator,
 Tang and  Yu \cite{[TY1]} successfully established the partial regularity of suitable weak solutions to the fractional
Navier-Stokes equations in the case $ 3/4<\alpha<1$ in \cite{[TY1]}, where the
Hausdorff dimension of the potential space-time singular points set of  suitable weak solutions
  is at most $5-4\alpha$.
Since Lemma 2.6 in \cite{[TY1]} collapses when $\alpha=3/4$, it seems that
   the limiting case  $\alpha=3/4$ can not be covered in their work.
One objective of this work is to address this borderline case. This is partially motivated by the previous investigation of partial regularity to the solutions of   the 4D Navier-Stokes equations in \cite{[WW]}.
The following   observation plays an important role in our proof.
  Just as the 4D Navier-Stokes equations, from the interpolation inequality and  Sobolev embedding theorem,
 we find that it holds $u\in L_{t,x}^{3} $ and
$p\in L^{3/2}_{t,x}$ for
the suitable  weak solutions  of the 3D generalized Navier-Stokes equations for $\alpha=3/4$,
 which  ensures that every term in the local energy inequality \eqref{loc} makes sense. Meanwhile, this    means a recurrence relation that the left hand of local energy inequality can control the right hand of local energy inequality.
Specifically,
we devote oneself to treating the partial regularity of suitable weak solutions of the fractional  magnetohydrodynamic equations \eqref{GMHD}.

Throughout this paper,
$v^{\ast}$   denotes the extension  of $v$ associated with the  fractional Laplacian operator $(-\Delta)^{\alpha}$ in the sense \cite{[CS]}, for more details see Sections 2. The norm of parabolic Morrey space $ M_{2\alpha,\gamma}$ will be defined at the end of this section. In what follows, we consider the system \eqref{GMHD} in the case $n/4\leq\alpha=\beta<1(n=2,\,3)$ and $\alpha\neq1/2$ unless otherwise stated.
We are now ready to state the main theorems of this paper.
\begin{theorem}\label{the1.1}
Suppose that the triplet $(u,\,h,\,p)$ is a suitable weak solution to (\ref{GMHD}) and    $f\in L_{t,x}^{q}$ with $q>\f{n+2\alpha}{2\alpha}$ if $1/2<\alpha<3/4$;
$f\in M_{2\alpha,\gamma}$ with $\gamma>0$ if $3/4\leq\alpha<1$
 in $\mathbb{R}^{n}$ with $n=2,\,3$.   Then $u$ and $h$ can be bounded by $1$ on $[-\f{1}{8^{2\alpha}},0]\times B(\f{1}{8})$  provided the following condition holds,
\be\label{cond}\ba \sup_{t\in {[-1,0]}}\int_{B(1)}(|u|^{2}+&|h|^{2})
+\iint_{Q^{\ast}(1)}  y^{1-2\alpha}\big(
|\nabla^{\ast}u^{\ast}|^{2}+|\nabla^{\ast}h^{\ast}|^{2}\big)+
\B(\iint_{Q(1)}|p|^{3/2}\B)^{2/3} +\|f\|  \leq \varepsilon_{1}, \ea\ee
for an absolute constant $\varepsilon_{1}>0$,
where
 $$\|f\|=\left\{\ba
& \|f\|_{L^{q}(Q(1))},\,~~~1/2<\alpha<3/4,\\
&\|f\|_{M_{2\alpha,\gamma}},~~~~~~~3/4\leq\alpha<1.
\ea\right.
$$
\end{theorem}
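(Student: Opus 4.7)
The plan is to run a Caffarelli--Kohn--Nirenberg-type $\varepsilon$-regularity argument in the Caffarelli--Silvestre extension framework, treating $(u,h)$ as a single vector so that the MHD case parallels the fractional Navier--Stokes analysis of \cite{[TY1]}. Working with the extensions $u^{\ast},h^{\ast}$ in the weighted half-space (degenerate weight $y^{1-2\alpha}$), I introduce scaled dimensionless quantities invariant under the natural scaling $(x,t)\mapsto(\lambda x,\lambda^{2\alpha}t)$, $(u,h)\mapsto\lambda^{2\alpha-1}(u,h)$, $p\mapsto\lambda^{4\alpha-2}p$:
\[
E(r)=\sup_{-r^{2\alpha}\le t\le 0}\frac{1}{r^{n-2\alpha}}\int_{B(r)}(|u|^{2}+|h|^{2}),\qquad E^{\ast}(r)=\frac{1}{r^{n-2\alpha}}\iint_{Q^{\ast}(r)} y^{1-2\alpha}\bigl(|\nabla^{\ast}u^{\ast}|^{2}+|\nabla^{\ast}h^{\ast}|^{2}\bigr),
\]
together with a scaled $L^{3/2}$-excess $P(r)$ of the pressure and a scaled term $F(r)$ for the force (either $L^{q}$ or $M_{2\alpha,\gamma}$). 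Hypothesis \eqref{cond} then reads $E(1)+E^{\ast}(1)+P(1)+F(1)\lesssim\varepsilon_{1}$.

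\textbf{One-step decay and iteration.} The central step is an excess-decay inequality of the schematic form
\[
E(\theta\rho)+E^{\ast}(\theta\rho)+P(\theta\rho)\le C_{0}\theta^{2\alpha}\bigl(E(\rho)+E^{\ast}(\rho)\bigr)+C_{0}\theta^{-\kappa}\bigl(E(\rho)+E^{\ast}(\rho)+P(\rho)\bigr)^{3/2}+C_{0}F(\rho),
\]
valid for $\theta\in(0,1/4)$ and some fixed $\kappa>0$. Localizing the MHD local energy inequality with a cut-off supported in $Q^{\ast}(\rho)$, the divergence-free structure $\operatorname{div}u=\operatorname{div}h=0$ rewrites the combined nonlinearity $(u\cdot\nabla u-h\cdot\nabla h)\cdot u+(u\cdot\nabla h-h\cdot\nabla u)\cdot h$ entirely in divergence form as $\tfrac12\partial_j(u_j(|u|^{2}+|h|^{2}))-\partial_j(h_j(u\cdot h))$; these cubic terms are then bounded by a weighted Sobolev--Poincar\'e inequality in the extension variables that upgrades $(E,E^{\ast})$ to a scaled $L^{3}_{t,x}$-norm of $(u,h)$. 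The pressure is split as $p=p_{\mathrm{loc}}+p_{\mathrm{har}}$: the local part is controlled via Calder\'on--Zygmund by the nonlinearity, the harmonic part enjoys sup-norm decay on smaller cylinders, and the force is absorbed by H\"older estimates ($L^{q}$ case) or a parabolic Morrey inclusion. Choosing first $\theta$ small to kill $C_{0}\theta^{2\alpha}$, then $\varepsilon_{1}$ small so that the cubic term remains strictly subdominant, iteration yields a geometric decay at all dyadic scales $\theta^{k}$, which via a Campanato-type embedding upgrades to H\"older continuity of $u$ and $h$ on $[-8^{-2\alpha},0]\times B(1/8)$ and, in particular, the claimed pointwise bound by $1$.

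\textbf{Main obstacle.} The crux is the endpoint $\alpha=3/4$ in dimension three, where the Gagliardo--Nirenberg inequality of \cite[Lem.~2.6]{[TY1]} degenerates. Following the observation emphasized in the introduction, at $\alpha=3/4$ interpolation between $L^{\infty}_{t}L^{2}_{x}$ and $L^{2}_{t}L^{4}_{x}$ (the latter from $\dot H^{3/4}(\mathbb{R}^{3})\hookrightarrow L^{4}$) places $u$ exactly in $L^{3}_{t,x}$ and hence $p$ in $L^{3/2}_{t,x}$, so every term of the local energy inequality is integrable and the inequality \emph{self-controls}: its left-hand side dominates the cubic right-hand side once $\varepsilon_{1}$ is small, providing the recurrence that replaces the lost Gagliardo--Nirenberg step. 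Combined with carefully tracking the fractional scaling of the pressure across the full range $3/4\le\alpha<1$ under the weaker Morrey-space hypothesis $f\in M_{2\alpha,\gamma}$, this is the most delicate technical piece of the proof.
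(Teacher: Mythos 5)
Your overall framework (extension variables, scale-invariant quantities, local energy inequality, splitting the pressure into a Calder\'on--Zygmund part and a harmonic part, and the $L^{3}_{t,x}$/$L^{3/2}_{t,x}$ observation at $\alpha=3/4$) is consistent with the paper, but the architecture of your argument does not actually deliver the conclusion of Theorem \ref{the1.1}, and this is a genuine gap rather than a stylistic difference. Your scheme --- a one-step excess decay $G(\theta\rho)\le C_0\theta^{2\alpha}G(\rho)+C_0\theta^{-\kappa}G(\rho)^{3/2}+C_0F(\rho)$ followed by iteration --- only propagates \emph{smallness} of the scale-invariant quantities down to all scales; the fixed point of that iteration is of size $O(\varepsilon_1)$ (the force term alone prevents decay to zero), so $G(\theta^{k})$ does not decay geometrically and no Campanato-type embedding applies. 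Worse, even if $E(r)\le\varepsilon_1$ held at every scale, this only says $\frac{1}{|B(r)|}\int_{B(r)}|u|^{2}\le\varepsilon_1 r^{2-4\alpha}$, and since $2-4\alpha<0$ for $\alpha>1/2$ this is not a pointwise bound; the quantities you iterate are zero-dimensional, not superinvariant oscillation quantities, so there is no route from their smallness to $|u|,|h|\le1$. What you have written is essentially the proof of Theorem \ref{the1.2} (the ``second step''), which in the paper explicitly \emph{invokes} Theorem \ref{the1.1} at the end; used as a proof of Theorem \ref{the1.1} itself it is circular. (There is also a normalization slip: the scale-invariant energy is $r^{-(n+2-4\alpha)}\int_{B(r)}|u|^{2}$, not $r^{-(n-2\alpha)}\int_{B(r)}|u|^{2}$.)

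The paper's actual mechanism for producing the pointwise bound is a Caffarelli--Kohn--Nirenberg-style induction run at \emph{every} center $(x_{0},t_{0})\in Q(1/8)$: one proves, by induction on $k$, the dimensionless estimate
\begin{equation*}
\frac{1}{r_{k}^{\,n+2\alpha}}\iint_{\tilde Q_{k}}\bigl(|u|^{3}+|h|^{3}\bigr)
+\min\bigl\{1,r_{k}^{(4\alpha-3)/3}\bigr\}\,\frac{1}{r_{k}^{\,n+2\alpha}}\iint_{\tilde Q_{k}}|u|\,|p-\overline p_{k}|\le\varepsilon_{1}^{2/3}
\qquad\text{for all }k\ge3,
\end{equation*}
and then concludes $|u|^{3}+|h|^{3}\le\varepsilon_{1}^{2/3}\le1$ a.e.\ by the Lebesgue differentiation theorem --- a step entirely absent from your proposal. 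To close that induction one cannot use a plain cutoff at scale $r_{k}$: the key device (Proposition \ref{keyinindu}) is to test the local energy inequality with the fundamental solution $\Gamma$ of the backward fractional heat equation (and its extension $\Gamma^{\ast}$), whose lower bound $\Gamma\gtrsim r_{k}^{-n}$ on $\tilde Q_{k}$ makes the left side control the scaled energy at scale $r_{k}$ while the right side becomes a \emph{sum over all intermediate dyadic scales} $k_{0}\le l\le k$ of the cubic and pressure terms, each of which is $\le\varepsilon_{1}^{2/3}$ by the induction hypothesis and carries a summable factor $r_{l}^{2\alpha-1}$. If you want to repair your proof, you must either adopt this heat-kernel test-function induction plus Lebesgue differentiation, or replace it by an independent first-step mechanism such as a De Giorgi iteration (as in Vasseur's proof cited in Remark 1.1) or a parabolic bootstrap; smallness propagation plus ``Campanato'' will not do it.
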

\begin{remark}
  A slightly different version of Theorem \ref{the1.1} was obtained for the 3D Navier-Stokes equations by Vasseur in \cite{[Vasseur]} and the 4D Navier-Stokes equations in \cite{[WW]} (see \cite{[JW1]} for the MHD equations), where all the proofs  rely on the De Giorigi iteration. Here, we mainly follow the   pathway of \cite{[CKN],[TY1]}  to
prove this theorem.  In contrast with the work of \cite{[CKN],[TY1]}, we will use estimate on pressure $p$ in $L^{3/2}_{t,x} $  norm instead of $L^{p}_{t}L^{1}_{x}$($p>1$) norm  utilized there.
Moreover,   Theorem \ref{the1.1} without the magnetic field seems to be  a new regularity
criterion for the suitable weak solutions of the    fractional Navier-Stokes equations in $\mathbb{R}^{3}$,
which is of independent
interest.
\end{remark}
\begin{theorem}\label{the1.2}
 Assume that
$(u,\,h,\,p)$ is a suitable weak solution to (\ref{GMHD}),  then
 $(0,\,0)$ is a regular point of $u(x,t)$ and $h(x,t)$  if the following condition holds,
\be\label{parabolic}
\limsup_{r\to 0_+}\f{1}{r^{n+2-4\alpha}}\iint_{Q^{\ast}(r)}  y^{1-2\alpha}\big(
|\nabla^{\ast}u^{\ast}|^{2}+  |\nabla^{\ast}h^{\ast}|^{2}\big) \leq\varepsilon_{2},~~~~n=2,~ 3,\ee
for an universal constant $\varepsilon_{2}>0$.
\end{theorem}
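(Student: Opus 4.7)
\textbf{Proof proposal for Theorem \ref{the1.2}.}
I would deduce Theorem \ref{the1.2} from Theorem \ref{the1.1} via a Caffarelli--Kohn--Nirenberg style rescaling argument. The natural scaling of (\ref{GMHD}) sends $(u,h,p,f)$ to
\[
u_{\lambda}(x,t)=\lambda^{2\alpha-1}u(\lambda x,\lambda^{2\alpha}t),\quad h_{\lambda}(x,t)=\lambda^{2\alpha-1}h(\lambda x,\lambda^{2\alpha}t),
\]
\[
p_{\lambda}(x,t)=\lambda^{4\alpha-2}p(\lambda x,\lambda^{2\alpha}t),\quad f_{\lambda}(x,t)=\lambda^{4\alpha-1}f(\lambda x,\lambda^{2\alpha}t),
\]
and the Caffarelli--Silvestre extension transforms as $u^{\ast}_{\lambda}(x,y,t)=\lambda^{2\alpha-1}u^{\ast}(\lambda x,\lambda y,\lambda^{2\alpha}t)$. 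A change of variables yields
\[
\iint_{Q^{\ast}(1)}y^{1-2\alpha}|\nabla^{\ast}u^{\ast}_{\lambda}|^{2}\,dx\,dy\,dt=\frac{1}{\lambda^{n+2-4\alpha}}\iint_{Q^{\ast}(\lambda)}y^{1-2\alpha}|\nabla^{\ast}u^{\ast}|^{2}\,dx\,dy\,dt,
\]
so the left side of (\ref{parabolic}) is precisely the scale-invariant form of the extension-gradient summand in (\ref{cond}). Consequently, (\ref{parabolic}) furnishes a $\lambda_{0}>0$ such that, for every $\lambda\le\lambda_{0}$, this summand of (\ref{cond}) applied to $(u_{\lambda},h_{\lambda},p_{\lambda},f_{\lambda})$ is bounded by $3\varepsilon_{2}$.

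The heart of the argument is to upgrade this single smallness into smallness of the full left-hand side of (\ref{cond}). First, the trace--Sobolev inequality for the $A_{2}$-weight $y^{1-2\alpha}$ (the Muckenhoupt machinery exploited in the preliminaries of \cite{[TY1]}) converts smallness of $\iint_{Q^{\ast}(1)}y^{1-2\alpha}|\nabla^{\ast}u^{\ast}_{\lambda}|^{2}$ into smallness of $\|u_{\lambda}-\bar{u}_{\lambda}\|_{L^{2}_{t}L^{2n/(n-2\alpha)}_{x}(Q(1))}$, and analogously for $h_{\lambda}$. Second, testing the local energy inequality against a smooth cutoff adapted to the extended domain propagates this mixed-norm control into smallness of $\sup_{t}\int_{B(r)}(|u_{\lambda}|^{2}+|h_{\lambda}|^{2})$ after absorbing the averages via a parabolic Poincar\'e inequality. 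Third, I would decompose $p_{\lambda}=p_{1}+p_{2}$ on $B(1)$, where $p_{1}$ is the Newtonian potential of $\Div\Div(u_{\lambda}\otimes u_{\lambda}-h_{\lambda}\otimes h_{\lambda})$ against a cutoff of $B(2)$; Calder\'on--Zygmund then bounds it by $\|u_{\lambda}\|_{L^{3}(Q(2))}^{2}+\|h_{\lambda}\|_{L^{3}(Q(2))}^{2}$, which is small by interpolation between the two preceding estimates, while $p_{2}$ is $x$-harmonic on $B(1)$ and satisfies a classical Campanato/mean-value iteration across scales that lets one trade its bounded $L^{3/2}$ norm at scale $2$ for a small version at a smaller scale. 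For the forcing, $\|f_{\lambda}\|_{L^{q}(Q(1))}=\lambda^{4\alpha-1-(n+2\alpha)/q}\|f\|_{L^{q}(Q(\lambda))}$, whose exponent exceeds $2\alpha-1>0$ under the assumption $q>(n+2\alpha)/(2\alpha)$; in the regime $3/4\le\alpha<1$ the parabolic Morrey norm instead gains a factor $\lambda^{\gamma}$. Either way $\|f_{\lambda}\|\to 0$ as $\lambda\to 0_{+}$.

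Choosing $\varepsilon_{2}$ small and $\lambda$ subsequently small, every summand of (\ref{cond}) for $(u_{\lambda},h_{\lambda},p_{\lambda},f_{\lambda})$ falls below $\varepsilon_{1}$, so Theorem \ref{the1.1} forces $|u_{\lambda}|,|h_{\lambda}|\le 1$ on $[-8^{-2\alpha},0]\times B(1/8)$; undoing the rescaling yields $u,h\in L^{\infty}$ on a parabolic neighborhood of $(0,0)$, which is the desired regularity. The most delicate step is the pressure bootstrap: the harmonic remainder $p_{2}$ is only bounded, not small, at the unit scale, so extracting smallness requires iterating a Campanato-type decay estimate for $x$-harmonic functions across a geometric sequence of scales, and the iteration must be compatible with the extension scaling so that the weighted extension integrals remain consistent from scale to scale. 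The MHD coupling compounds this difficulty, since $|h|^{2}$ enters $p$ through the total pressure convention $p=\pi+\tfrac{1}{2}|h|^{2}$, forcing the magnetic mixed-norm estimate to be maintained in parallel with the velocity one at every stage of the iteration.
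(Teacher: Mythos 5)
Your overall strategy (drive every summand of \eqref{cond} below $\varepsilon_{1}$ at some small scale, then rescale and invoke Theorem \ref{the1.1}) is the same as the paper's, and your treatment of the force is correct. The gap is in your second step: you claim that one application of the local energy inequality, ``after absorbing the averages via a parabolic Poincar\'e inequality,'' yields smallness of $\sup_{t}\int_{B}(|u_{\lambda}|^{2}+|h_{\lambda}|^{2})$. No Poincar\'e-type inequality can do this, because the hypothesis \eqref{parabolic} controls only the oscillation of $u$: a field that is (locally) constant in space has $\nabla^{\ast}u^{\ast}=0$ yet arbitrarily large mean, and the a priori bound coming from $u\in L^{\infty}_{t}L^{2}_{x}$ gives only $E(u,\mu)\leq C\mu^{-(n+2-4\alpha)}$, which blows up as $\mu\to 0$ for $\alpha<(n+2)/4$. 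Moreover, a single application of \eqref{loc} at a fixed scale returns on the right-hand side the term $\mu^{-2\alpha}\iint_{Q(2\mu)}|u|^{2}$, which after normalization is of exactly the same order as the quantity $E(2\mu)$ you are trying to bound; nothing at one scale makes it small. The same objection applies to your pressure step: you correctly see that the harmonic remainder needs a Campanato iteration across scales, but the Calder\'on--Zygmund piece $P_{1}$ at each scale is itself bounded by $E$ and $E_{\ast}$ at that scale, so the pressure iteration cannot be decoupled from an iteration of the energy quantities.

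What the paper actually does is iterate the coupled quantity $G(\mu)=E(\mu)+E_{\ast}(\mu)+P_{\frac{n+2\alpha}{n}}^{\frac{2n}{n+2\alpha}}(\mu)$ over a geometric sequence of radii $\mu=\lambda^{k}\mu_{0}$. The contraction comes from two sources: Lemma \ref{ineq} and Lemma \ref{presure} split each right-hand term into a piece carrying a positive power of $E_{\ast}(\rho)\leq\varepsilon_{2}$ (small by hypothesis \eqref{parabolic}) times a negative power of $\mu/\rho$, plus a piece carrying $E(\rho)$ or $P(\rho)$ multiplied by a positive power of $\mu/\rho$ such as $(\mu/\rho)^{4\alpha-2}$. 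Choosing first the ratio $\lambda$ and then $\varepsilon_{2}$ gives $G(\lambda\rho)\leq\bar{q}\,G(\rho)+C\lambda^{-6}\varepsilon_{2}^{\tau_{5}}$ with $\bar{q}<1$, and iterating beats down the uncontrolled low-frequency part of $u$, $h$ and $p$ simultaneously. Only after this multi-scale argument produces $G(\mu)\leq\varepsilon_{1}$ for all small $\mu$ does the rescaling-plus-Theorem \ref{the1.1} endgame you describe go through. As written, your proposal is missing this coupled contraction, which is the actual content of the proof.
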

\begin{remark}
In $\mathbb{R}^{3}$,
 Vitali  covering lemma utilized in \cite{[CKN]}  together with  Theorem \ref{the1.2} implies that the $5-4\alpha$ ($3/4\leq\alpha<1$) dimensional parabolic Hausdorff measure of the possible  singular points set of $u$ and $h$ is zero for any suitable solution of (\ref{GMHD}), which   extends the recent work of  Tang and  Yu \cite{[TY1]} in the case $3/4<\alpha<1$.
 It should be point out,
just as the 4D Navier-Stokes equations,  that it is not known whether the suitable weak solution to the system (\ref{GMHD}) exists.
At least, we can obtain the partial regularity of smooth solutions of the 3D generalized  Navier-Stokes equations with $\alpha=3/4$ at the first blow-up time.
\end{remark}
\begin{remark}
It worth noting that  Tang and  Yu \cite{[TY1]} proved \eqref{parabolic}
without the magnetic field in $\mathbb{R}^{3}$
under the condition  that the force  $f$ lies in $ L_{t,x}^{q'}$ with $q'>\f{9+6\alpha}{4\alpha+1}$.
Notice  that  $M_{2\alpha,\gamma}\supset L^{q}$ for $q>\f{2\alpha+n}{2\alpha}$ when $0<\gamma\leq2\alpha$ and $M_{2\alpha,\gamma}=\{0\}$ if $\gamma>2\alpha$. Hence, the assumption on the force $f$ in \cite{[TY1]} is relaxed.
Furthermore, under the definition of regular point, the  hypothesis that $f\in L^{q}, q>\f{n+2\alpha}{2\alpha}$ is optical in the sense of   scaling.
We mention that
 the  Caffarelli-Kohn-Nirenberg theorem to the 3D Navier-Stokes equations with the external force belonging to parabolic Morrey space is due to the work of  Ladyzenskaja and    Seregin in \cite{[LS]}, where the proof relies on a blow-up procedure and compact argument.
\end{remark}
Since the existence of
  the magnetic field in MHD equations \eqref{GMHD},
  the proof of Theorem \ref{the1.2}  is  more involved than  the generalized Navier-Stokes equations. Particularly,
a difficulty arises when we deal with the case $\alpha=\beta\leq3/4$. As mentioned above,  Lemma 2.6 in \cite{[TY1]} breaks down in this case.
To  build an effective iteration scheme via local energy inequality \eqref{loc},  our observation is that, under the hypothesis \eqref{parabolic}, the right hand side of the local energy inequality \eqref{loc}
should be seen as the  magnitude  like $\|u\|^{2}_{L^{3}_{t,x}}$ ($\|h\|^{2}_{L_{t,x}^{3}}$) rather than
$\|u\|^{3}_{L^{3}_{t,x}}$ as usual.
Based on this,
we find that Lemma \ref{ineq} established in the next section   instead of Lemma 2.6 with $\alpha>3/4$ in \cite{[TY1]} works for $\alpha>1/2$.
However, this  makes the
  estimate of term  $\iint u\cdot\nabla \phi p$ be subtle.
To this end, making  full use of
 the interior estimate of Harmonic function, we could
establish the decay estimate of pressure $p-\bar{p}$
in  $L_{t,x}^{\f{n+2\alpha}{n}}$ norm. Meanwhile,
the divergence-free algebraical structure  of \eqref{GMHD}
plays a crucial role in
  dealing with the interaction terms between the  magnitude field and the velocity field in the local energy inequality   to avoid the appearance of terms   similar to $\|u\|^{3}_{L^{3}_{t,x}}$.
This enables us
to achieve the proof of Theorem \ref{the1.2}.

Partially motivated by the works \cite{[HX2],[HX1],[CMZ],[WZ]}, we show
 that the velocity field  plays  a more
important role than the  magnetic field in the local regularity
theory of the MHD equations \eqref{GMHD}. Precisely, we shall prove some
interior regularity criteria which do  not
explicitly involve the magnetic field  for
the suitable weak solutions to the MHD equations \eqref{GMHD}. Without loss of
generality, we assume that $f= 0$ in the following theorems.
\begin{theorem}\label{the1.3}
Assume that the triplet
$(u,\,h,\,p)$ is a suitable weak solution  to  (\ref{GMHD}),  then
 $(0,0)$ is regular point of $(u,\,h)$  provided the
following condition holds, \\
  \begin{equation}
\limsup_{r\to 0_+}\,\, \f{1}{r^{n+3-4\alpha}}\iint_{Q(r)}
|u|^{3}dxdt \leq \varepsilon_{3},
\label{33}
\end{equation}
for an universal constant $\varepsilon_{3}>0$.
 \end{theorem}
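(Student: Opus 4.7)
The plan is to reduce to Theorem \ref{the1.2}: it suffices to prove that the hypothesis \eqref{33} forces
\[
\limsup_{r\to 0_+}\f{1}{r^{n+2-4\alpha}}\iint_{Q^{\ast}(r)} y^{1-2\alpha}\bigl(|\nabla^{\ast} u^{\ast}|^{2}+|\nabla^{\ast} h^{\ast}|^{2}\bigr)\leq\varepsilon_{2},
\]
after which Theorem \ref{the1.2} delivers the regularity of $(u,h)$ at $(0,0)$. The task is thus to upgrade the $L^{3}$-smallness of the velocity to simultaneous gradient smallness of \emph{both} $u$ and $h$, via a Caffarelli--Kohn--Nirenberg-type iteration on the Caffarelli--Silvestre extension, in the same spirit that drives the proofs of Theorems \ref{the1.1}--\ref{the1.2}.

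I would introduce the scale-invariant quantities
\[
\ba
A(r)&=\f{1}{r^{n+2-4\alpha}}\sup_{-r^{2\alpha}\le t\le 0}\int_{B(r)}(|u|^{2}+|h|^{2}),\\
E(r)&=\f{1}{r^{n+2-4\alpha}}\iint_{Q^{\ast}(r)} y^{1-2\alpha}\bigl(|\nabla^{\ast} u^{\ast}|^{2}+|\nabla^{\ast} h^{\ast}|^{2}\bigr),\\
U_{3}(r)&=\f{1}{r^{n+3-4\alpha}}\iint_{Q(r)}|u|^{3},\quad H_{3}(r)=\f{1}{r^{n+3-4\alpha}}\iint_{Q(r)}|h|^{3},\\
P(r)&=\f{1}{r^{(2/3)(n+3-4\alpha)}}\Bigl(\iint_{Q(r)}|p|^{3/2}\Bigr)^{2/3},
\ea
\]
so that \eqref{33} reads $\limsup_{r\to 0_+}U_{3}(r)\le\varepsilon_{3}$. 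Testing the local energy inequality \eqref{loc} against a standard cutoff supported in $Q(r)$, the decisive algebraic point is that $\Div u=\Div h=0$ merges the two cubic magnetic-velocity self-interactions $\iint(h\cdot\nabla)u\cdot h\,\phi$ and $-\iint(h\cdot\nabla)h\cdot u\,\phi$, after integration by parts, into a single mixed boundary-type term $\iint(h\cdot u)(h\cdot\nabla\phi)$, which prevents any stand-alone $\|h\|_{L^{3}}^{3}$ from appearing. A standard splitting $p=p_{1}+p_{2}$ on $B(r)$ (with $p_{1}$ the local Calder\'on--Zygmund solution of $-\Delta p_{1}=\partial_{i}\partial_{j}(u_{i}u_{j}-h_{i}h_{j})\chi_{B(r)}$, bounded in $L^{3/2}$ by $\iint(|u|^{3}+|h|^{3})$, and $p_{2}$ harmonic in $x$ so that the interior estimate yields the decay $P_{p_{2}}(\theta r)\lesssim\theta^{2/3}P(r)$) then produces a Morrey-type recurrence
\[
A(\theta r)+E(\theta r)+P(\theta r)\le C\theta^{\kappa}\bigl(A(r)+E(r)+P(r)\bigr)+C\theta^{-N}\bigl(U_{3}(r)^{\sigma_{1}}+H_{3}(r)^{\sigma_{2}}\bigr),
\]
for positive exponents $\kappa,N,\sigma_{1},\sigma_{2}$ depending only on $n$ and $\alpha$.

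The principal obstacle is the complete absence of any a priori smallness of $H_{3}$; the remedy is to dominate $H_{3}$ by $A+E$ through Lemma \ref{ineq}, which in the extension setting yields $\|h\|_{L^{2(n+2\alpha)/n}(Q(r))}\lesssim(A(r)+E(r))^{1/2}$, and hence by interpolation against $L^{\infty}_{t}L^{2}_{x}$
\[
H_{3}(r)\lesssim r^{\tau}\bigl(A(r)+E(r)\bigr)^{3/2},
\]
with $\tau=\tau(n,\alpha)>0$ (positivity relying on the standing assumption $\alpha\geq n/4$). Substituted back, this closes the recurrence into a self-improving bound for $A+E+P$ driven solely by the small quantity $U_{3}\le\varepsilon_{3}$: choosing $\theta$ with $C\theta^{\kappa}<1/2$ and iterating from a scale at which $A+E+P$ is finite (available because $(u,h,p)$ is a suitable weak solution), we obtain $A(r)+E(r)+P(r)\to 0$ as $r\to 0_{+}$. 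In particular $E(r_{0})\le\varepsilon_{2}$ for some $r_{0}>0$, and Theorem \ref{the1.2} delivers the desired regularity of $(u,h)$ at $(0,0)$.
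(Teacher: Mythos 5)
Your overall strategy --- reduce to Theorem \ref{the1.2} by showing that \eqref{33} forces smallness of the scale-invariant energy, via a CKN-type recurrence with a Calder\'on--Zygmund/harmonic splitting of the pressure --- is indeed the paper's strategy, and your observation that the divergence-free structure prevents a stand-alone $\|h\|_{L^3}^3$ is the right key remark. However, two steps of your proposal do not close.

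First, testing \eqref{loc} with a ``standard cutoff supported in $Q(r)$'' does not produce the recurrence you write down. As in \eqref{loc1.2}, such a test function leaves $C\,E_{\ast}(2\mu)$ and $C\,E_3^{2/3}(h,2\mu)$-type terms on the right-hand side with absolute constants, \emph{not} accompanied by a positive power of $\theta=\mu/\rho$. In the proof of Theorem \ref{the1.2} this is harmless because $E_{\ast}$ is assumed small there; under \eqref{33} it is fatal, since nothing controls $E_{\ast}$ a priori. This is exactly why the paper proves Corollary \ref{coro}: by testing with the fractional heat kernel $\Gamma$ (Proposition \ref{keyinindu}) one obtains the crucial decay factor $\B(\f{\mu}{\rho}\B)^{4\alpha-2}$ in front of $E(\rho/4)+E_{\ast}(\rho/4)$, and the only error terms left are $\iint\big(|u|^{3}+|u||h|^{2}+|u||p-p_{\rho/4}|\big)$, each carrying at least one factor of $u$. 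Your proposal is missing this choice of test function, without which the contraction never starts.

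Second, your treatment of $H_3$ breaks down. Splitting the error into $U_3^{\sigma_1}+H_3^{\sigma_2}$ by Young discards the only available smallness: the correct estimate is $\rho^{-(n+3-4\alpha)}\iint|u||h|^{2}\leq E_3^{1/3}(u,\rho)\,E_3^{2/3}(h,\rho)$, where $E_3^{2/3}(h,\rho)\leq C\big(E(\rho)+E_{\ast}(\rho)\big)$ is \emph{linear} in the energy (this is \eqref{ineq1/2} with exponent $2/3$), so the whole term is $\leq C\varepsilon_3^{1/3}\big(E(\rho)+E_{\ast}(\rho)\big)$ and the factor $\varepsilon_3^{1/3}$ beats the large constant $\theta^{-N}$ once $\varepsilon_3$ is chosen after $\theta$; the same device handles $\iint|u||p-\overline{p}_{\rho}|\leq E_3^{1/3}(u,\rho)P_{3/2}^{2/3}(\rho)$, which is why the paper iterates $G_1=E+E_{\ast}+\varepsilon_3^{1/4}P_{3/2}^{2/3}$. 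By contrast, your claimed bound $H_3(r)\lesssim r^{\tau}\big(A(r)+E(r)\big)^{3/2}$ with $\tau>0$ cannot hold: both sides are dimensionless under the natural scaling, and Lemma \ref{ineq} gives exactly $\tau=0$. With $\tau=0$ the superlinear term $\theta^{-N}\big(A+E\big)^{3/2}$ is not absorbable, because for a suitable weak solution $A(r)+E(r)$ has no a priori bound as $r\to0_+$ (it may grow like $r^{-(n+2-4\alpha)}$), so the recurrence does not self-improve.
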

\begin{remark}
It is worth remarking that the regularity criterion \eqref{33} is given just in terms of the velocity field $u$ instead of the combination between the velocity field and the magnetic field. Moreover, even for the 3D fractional Navier-Stokes system,  although the extension of $u$ appears in  the righthand side of its local energy inequality,  this sufficient regularity condition \eqref{33}  does not involve  $u^{\ast}$.
\end{remark}
The key issue to prove \eqref{33} is to  resort the appropriate test function to  circumvent the straightforward control of
the terms involving the magnetic field $h$ and the extensions $u^{\ast},\,h^{\ast}$ on the right
hand side of the local energy inequality \eqref{loc}.
Furthermore,    the  magnitude of the left  hand of the local energy inequality likes $\|h\|^{2/3}_{L_{t,x}^{3}}$ ($\|p\|^{2/3}_{L_{t,x}^{3/2}}$) helps us to handle  the terms $\iint |u||h|^{2}+|u||p-p_{\rho}|$. This allows us to complete the
proof of Theorem \ref{the1.3}. Due to the  pressure in terms of the
the magnetic field and the velocity field,  it seems difficult to  extend the integral norms with different exponents in space and
time in \eqref{33}. Using the completely different  iteration scheme involving  the pressure and
 a slight variant of  treating
 the terms $\iint |u||h|^{2}+|u||p-p_{\rho}|$ in the proof of Theorem \ref{the1.3} gives
 the regularity conditions via  the velocity with sufficiently small local scaled norm and the magnetic field with bounded local scaled norm.
However, we would like to point out that we can   remove the hypothesis
of the  magnetic field when $(n+4)/8<\alpha=\beta<1$.
\begin{theorem}\label{the1.4}
Assume that the triplet
$(u,\,h,\,p)$ is a suitable weak solution  to  (\ref{GMHD}),  then
 $(0,0)$ is regular point of $(u,\,h)$  provided one of the
following conditions holds.

(1)  For any
constant $M>0$, there exists a positive constant $\varepsilon_{4}(M)$ such that
\begin{align}
\limsup_{r\to 0_+}\,\, r^{-( \frac n{\ell} + \frac {2\alpha}{q} -(2\alpha-1))}
\Big(\int^{0}_{-r^{2\alpha}}\Big(\int_{B(r)}
|u|^{\ell}dx\Big)^{\f{q}{\ell }}ds\Big)^{\f{1}{q}} \leq \varepsilon_{4},\nonumber\\
\limsup_{r\to 0_+}\,\, r^{-( \frac n{\ell} + \frac {2\alpha}{q} -(2\alpha-1))}
\Big(\int^{0}_{-r^{2\alpha}}\Big(\int_{B(r)}
|h|^{\ell}dx\Big)^{\f{q}{\ell }}ds\Big)^{\f{1}{q}} \leq M,\label{hbd}
\end{align}
where the pair $(q,\,\ell)$  satisfies
\be  \label{pq1.1}
 2\alpha- 1\leq \f{n}{\ell} +\f{2\alpha}{q} \leq 2\alpha,~~ 1\leq q\leq\infty,~n=2,\,3.
 \ee

(2)  There exists a positive constant $\varepsilon_{5} $
such that
\begin{equation}
\limsup_{r\to 0_+}\,\, r^{-( \frac n{\ell} + \frac {2\alpha}{q} -(2\alpha-1))}
\Big(\int^{0}_{-r^{2\alpha}}\Big(\int_{B(r)}
|u|^{\ell}dx\Big)^{\f{q}{\ell }}ds\Big)^{\f{1}{q}} \leq \varepsilon_{5},~~
 (n+4)/8<\alpha=\beta<1,
\label{pq1}
\end{equation}
where the pair $(\ell,\,q)$ satisfies
\begin{equation} \label{pq1.2}
 2\alpha- 1\leq \f{n}{\ell} +\f{2\alpha}{q} \leq 2\alpha, \quad ~~~\max\big\{1,\f{n}{4\alpha-n}\big\}<\ell,     ~n=2,\,3.
\end{equation}
\end{theorem}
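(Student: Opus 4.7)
The plan is to run a CKN-style iteration combining the local energy inequality~\eqref{loc} with a harmonic decomposition of the pressure, and then to invoke Theorem~\ref{the1.2} as the stopping criterion. Introduce the scale-invariant quantities
$$
A(r):=\f{1}{r^{n+2-4\alpha}}\sup_{-r^{2\alpha}\le t\le 0}\int_{B(r)}(|u|^{2}+|h|^{2}),\qquad
\delta(r):=\f{1}{r^{n+2-4\alpha}}\iint_{Q^{\ast}(r)} y^{1-2\alpha}\bigl(|\ti^{\ast}u^{\ast}|^{2}+|\ti^{\ast}h^{\ast}|^{2}\bigr),
$$
a scale-invariant pressure-oscillation quantity $D(r)$ built from $\|p-(p)_{B(r)}\|_{L^{(n+2\alpha)/n}(Q(r))}$, and the scale-invariant mixed-norm $\Phi_{v}(r):=r^{-(n/\ell+2\alpha/q-(2\alpha-1))}\|v\|_{L^{q}_{t}L^{\ell}_{x}(Q(r))}$ for $v\in\{u,h\}$. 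By Theorem~\ref{the1.2} it suffices to exhibit one scale $\rho>0$ at which $\delta(\rho)\le\varepsilon_{2}$.

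For a cutoff $\phi$ equal to one on $Q^{\ast}(r/2)$ and supported in $Q^{\ast}(r)$, the local energy inequality yields, modulo lower-order terms $\omega(r)\to 0$,
$$
A(r/2)+\delta(r/2)\lesssim r^{-(n+3-4\alpha)}\!\!\iint_{Q(r)}\!\!|u|(|u|^{2}+|h|^{2})+r^{-(n+3-4\alpha)}\!\!\iint_{Q(r)}\!\!|u|\,|p-(p)_{B(r)}|+\omega(r).
$$
I would pair the two cubic integrals by H\"older as $\Phi_{u}(r)\cdot\|u\|_{L^{q'}_{t}L^{\ell'}_{x}}^{2}$ and $\Phi_{u}(r)\cdot\|h\|_{L^{q'}_{t}L^{\ell'}_{x}}^{2}$, and convert the second factor into an interpolation of $A(r)$ and $\delta(r)$ by applying Sobolev embedding slice-wise through the Caffarelli--Silvestre extension; the constraint $2\alpha-1\le n/\ell+2\alpha/q\le 2\alpha$ is precisely what makes the resulting $r$-powers cancel. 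For the pressure, decompose $p=p_{1}+p_{2}$ on $B(r)$ with $-\la p_{1}=\partial_{i}\partial_{j}\bigl[(u_{i}u_{j}-h_{i}h_{j})\chi_{B(r)}\bigr]$ and $p_{2}$ harmonic: Calder\'on--Zygmund controls $p_{1}$ in $L^{(n+2\alpha)/n}$ by $|u|^{2}+|h|^{2}$, while interior regularity of the harmonic $p_{2}$ produces, for each $\theta\in(0,1/2)$, a decay of the form $D(\theta r)\lesssim\theta^{\kappa}D(r)+\theta^{-N}(A(r)+\delta(r))^{\sigma}$. Assembling these, one obtains
$$
A(r/2)+\delta(r/2)+D(r/2)\lesssim \bigl(\Phi_{u}(r)+\theta^{\kappa}\bigr)\bigl(A(r)+\delta(r)+D(r)\bigr)+\Phi_{u}(r)\,\Phi_{h}(r)^{2}+\omega(r).
$$
Choosing $\theta$ small first, then $r$ small so that $\Phi_{u}(r)\le\varepsilon_{4}(M)$ and $\Phi_{h}(r)\le M$, a standard contraction argument forces $A(r)+\delta(r)+D(r)\to 0$; this proves part~(1).

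For part~(2) the upper bound on $\Phi_{h}$ is no longer given, so I would run an auxiliary bootstrap on the magnetic equation alone. Testing the second equation of~\eqref{GMHD} by $h\phi^{2}$ and using $\s u=\s h=0$, the only uncontrolled term becomes $\iint (u\cdot\ti\phi^{2})|h|^{2}$, bounded by $\Phi_{u}(r)\cdot\|h\|_{L^{q'}_{t}L^{\ell'}_{x}}^{2}$. The hypotheses $(n+4)/8<\alpha=\beta<1$ and $\ell>\max\{1,n/(4\alpha-n)\}$ are exactly what enforce the Sobolev exponent produced by the $\beta$-dissipation on $h^{\ast}$ to be strictly larger than the one demanded by H\"older, permitting absorption of this term into the LHS after paying a small factor $\Phi_{u}(r)^{\eta}$. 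The outcome is a bound $\Phi_{h}(r)\lesssim\Phi_{u}(r)^{\eta'}(A(r)+\delta(r))^{1-\eta'}$, which feeds back into the iteration above and reduces part~(2) to part~(1).

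The principal obstacle is the simultaneous control of $\iint|u||h|^{2}$ and $\iint|u|\,|p-(p)_{B(r)}|$ without regenerating a $\|u\|_{L^{3}_{t,x}}^{3}$ factor: each occurrence of $|u|^{2}$, $|h|^{2}$, or $|p-(p)_{B(r)}|$ must be paired with exactly one factor of $u$ drawn from $\Phi_{u}$. At the critical end $n/\ell+2\alpha/q=2\alpha-1$ the H\"older pairing is scale-invariant, so the contraction of the iteration comes solely from the harmonic decay of $p_{2}$ and from the divergence-free algebraic cancellation in $u\cdot\ti h-h\cdot\ti u$. Part~(2) is additionally delicate because the auxiliary bootstrap must produce a strict contraction on $\Phi_{h}$ itself, and it is only under the lower bound $(n+4)/8<\alpha$ that the Sobolev gain from the $\beta$-extension of $h$ is large enough for this absorption to succeed.
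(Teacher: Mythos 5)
Your part~(1) follows essentially the paper's route: the local energy inequality in the reduced form of Corollary~\ref{coro} (where the divergence-free structure has already eliminated $|h|^{3}$ and $|h||p|$ from the right-hand side), the H\"older pairing $\iint|u|(|u|^{2}+|h|^{2})\lesssim\|u\|_{L^{q,\ell}}\|(u,h)\|^{2}_{L^{2q',2\ell'}}$ with the second factor interpolated between $E$ and $E_{\ast}$ through the Caffarelli--Silvestre extension as in \eqref{keyinq7}, a harmonic decomposition of the pressure, and an iteration closed by Theorem~\ref{the1.2}. One caveat: measuring the pressure oscillation in $L^{(n+2\alpha)/n}_{t,x}$ forces you to pair it with $u$ in $L^{(n+2\alpha)/(2\alpha)}_{t,x}$, which is an energy-interpolation norm and therefore carries no factor of $\varepsilon_{4}$; the paper instead uses $P_{3/2}$, pairs it with $E_{3}^{1/3}(u,\rho)\lesssim E_{q,\ell}^{1/3}(u,\rho)\,(E(\rho)+E_{\ast}(\rho))^{1/3}$, and lets the smallness enter through \eqref{fns}, in which the scaled $L^{q,\ell}$ norm of $h$ --- bounded by $M$ --- is needed exactly once, to control the $h\otimes h$ contribution to the Calder\'on--Zygmund part of $p$. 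That is the only place \eqref{hbd} is used, and your scheme must reproduce it; also, the iteration drives $A+\delta+D$ below a threshold of order $\varepsilon_{4}M^{2}$, not to zero.

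The genuine gap is in part~(2). Your auxiliary bootstrap on the $h$-equation is sound as far as it goes: testing by $h\phi^{2}$ and using $\mathrm{div}\,u=\mathrm{div}\,h=0$, the nonlinear terms reduce to $\iint(u\cdot\nabla\phi^{2})|h|^{2}$ and $\iint(u\cdot h)(h\cdot\nabla\phi^{2})$, both bounded by $\Phi_{u}(r)\bigl(E(h,r)+E_{\ast}(h,r)\bigr)$, so the localized magnetic energy decays. But the conclusion you extract, $\Phi_{h}(r)\lesssim\Phi_{u}(r)^{\eta'}(A(r)+\delta(r))^{1-\eta'}$, cannot hold for a general admissible pair: for $(q,\ell)$ with $n/\ell+2\alpha/q<n/2$ (for instance $q=\infty$, $\ell=n/(2\alpha-1)>2$), the scaled $L^{q}_{t}L^{\ell}_{x}$ norm of $h$ requires strictly more integrability than $L^{\infty}_{t}L^{2}_{x}\cap L^{2}_{t}\dot H^{\alpha}_{x}$ provides, so no interpolation of $A$ and $\delta$ controls $\Phi_{h}$, and part~(2) does not reduce to part~(1). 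Moreover the role you assign to the hypotheses $(n+4)/8<\alpha$ and $\ell>\max\{1,n/(4\alpha-n)\}$ (a Sobolev gain from the $\beta$-dissipation in the $h$-bootstrap) is not where they are actually needed. The device your proposal is missing is to measure the pressure oscillation in the dual mixed norm $P_{q',\ell'}$, estimate \eqref{wzp}: its Calder\'on--Zygmund part is bounded by $E^{1/q}(\rho)E_{\ast}^{1-1/q}(\rho)$ alone (for both $u\otimes u$ and $h\otimes h$, with no mixed norm of $h$ required), its harmonic part decays with exponent $4\alpha-n/\ell-1$, which is positive precisely because $\ell>n/(4\alpha-n)$, and then $\iint|u|\,|p-\overline{p}_{\rho}|\leq E_{q,\ell}(u,\rho)\,P_{q',\ell'}(\rho)$ carries the small factor $\varepsilon_{5}$ with no hypothesis on $h$ at all. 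Without this, or an equivalent substitute, your iteration for part~(2) does not close.
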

 \begin{remark}
 As a straightforward consequence of \eqref{hbd}, Serrin type   sufficient regular condition for the 3D generalized Navier-Stokes equations  is obtained. More precisely, let $(u,\,p)$ be the suitable weak solutions to the 3D fractional Navier-Stokes system for $3/4\leq\alpha<1$, then $u$ is regular on $Q(r/2)$ provided that   $u$ lies in $L^{q,\,\ell}(Q(r))$ with $2\alpha/q+3/\ell=2\alpha-1$\,($\ell>3/(2\alpha-1)$ or $\|u\|_{L^{\infty, 3/(2\alpha-1)}(Q(r))}$ is sufficiently small.
\end{remark}
\begin{remark}
We emphasize that the magnetic field with bounded local scaled norm in \eqref{hbd} is only used for treating the term $\iint |u||p-p_{\rho}|$.
\end{remark}

The remainder of this paper is organized as follows.
In the next section, we  will begin with some facts related to Caffarelli and Silvestre's generalized extension and collecting some useful inequalities associated with this extension.
Then we will  present  the definition of the suitable weak solutions and  establish various   dimensionless decay estimates.
In Sections 3, by means of induction argument, we complete the proof of Theorem \ref{the1.1}.
Combining Theorem \ref{the1.1} proved in
Sections 3 with the preliminary lemmas  in Section 2, we prove Theorem  \ref{the1.2} as well as Theorem \ref{the1.3} and Theorem \ref{the1.4} in  Sections 4. Finally, for completeness, an appendix is dedicated to proving inequalities  stated in Section 2.

\noindent
{\bf Notations:} Throughout this paper, we denote
\begin{align*}
     &B(x,\mu):=\{y\in \mathbb{R}^{n}||x-y|\leq \mu\}, && B(\mu):= B(0,\mu), && \tilde{B}(\mu):=B(x_{0},\,\mu),\\
     &B^{\ast}(x,\mu):=B(x,\mu)\times(0,\mu), && B^{\ast}(\mu):= B^{\ast}(0,\mu), && \tilde{B}^{\ast}(\mu):=B^{\ast}(x_{0},\,\mu),\\
     &Q(x,t,\mu):=B(x,\,\mu)\times(t-\mu^{2\alpha}, t),  && Q(\mu):= Q(0,0,\mu), && \tilde{Q}(\mu):= Q(x_{0},t_{0},\mu),\\
     &Q^{\ast}(x,t,\mu):=B^{\ast}(x,\mu)\times(t-\mu^{2\alpha},t), && Q^{\ast}(\mu):= Q^{\ast}(0,0,\mu), && \tilde{Q}^{\ast}(\mu):= Q^{\ast}(x_{0},t_{0},\mu),\\
     &r_{k}=2^{-k},\quad \tilde{B}_{k}:= \tilde{B}(r_{k}), \quad \tilde{B}^{\ast}_{k}:=\tilde{B}^{\ast}(r_{k}), && \tilde{Q}_{k}:=\tilde{Q}(r_{k}), &&\tilde{Q}^{\ast}_{k}:=\tilde{Q}^{\ast}(r_{k}).
\end{align*}
The classical Sobolev norm $\|\cdot\|_{H^{s}}$  is defined as   $\|f\|^{2} _{{H}^{s}}= \int_{\mathbb{R}^{n}} (1+|\xi|)^{2s}|\hat{f}(\xi)|^{2}d\xi$, $s\in \mathbb{R}$.
  We denote by  $ \dot{H}^{s}$ homogenous Sobolev spaces with the norm $\|f\|^{2} _{\dot{H}^{s}}= \int_{\mathbb{R}^{n}} |\xi|^{2s}|\hat{f}(\xi)|^{2}d\xi$.
 For $q\in [1,\,\infty]$, the notation $L^{q}(0,\,T;\,X)$ stands for the set of measurable functions on the interval $(0,\,T)$ with values in $X$ and $\|f(t,\cdot)\|_{X}$ belongs to $L^{q}(0,\,T)$.
   For simplicity,   we write
$$\|f\| _{L^{q,\,\ell}(Q(\mu))}:=\|f\| _{L^{q}(-\mu^{2\alpha},\,0;\,L^{\ell}(B(\mu)))}~~~\text{ and}~~~~
  \|f\| _{L^{q}(Q(\mu))}:=\|f\| _{L^{q,\,q}(Q(\mu))}. $$
  The parabolic Morrey space $M_{2\alpha,\gamma}$ is equipped with the norm
$$
\|f\|_{M_{2\alpha,\gamma}}=
\sup_{(x,\,t)\in\mathbb{R}^{n}\times(-T,0)}\sup_{R>0}
\B\{\f{1}{R^{\gamma-2\alpha}}
\B(\kfqRxt|f|^{2\alpha}\B)^{1/2\alpha}\B\}.
$$
 Denote
  the average of $f$ on the ball $B(r)$ by
$\overline{f}_{r}$ and the average of $p$ on the ball $\tilde{B}(l)$ by
$\overline{p}_{l}$. $K$  stands for the standard normalized fundamental solution of Laplace equation in $\mathbb{R}^{n}$ with $n\geq2$.
We denote by  $\D$ the divergence operator in $\mathbb{R}^{n+1}_{+}$ and  $\nabla^{\ast}$ the gradient operator in $\mathbb{R}^{n+1}_{+}$.
  $|\Omega|$ represents the Lebesgue measure of the set $\Omega$. We will use the summation convention on repeated indices.
 $C$ is an absolute constant which may be different from line to line unless otherwise stated in this paper.

 \section{ Preliminaries}\label{sec2}
\setcounter{section}{2}\setcounter{equation}{0}

In this section, we first recall Caffarelli and Silvestre's generalized extension for the fractional Laplacian operator $(-\Delta)^{s}$ with $0<s<1$ in \cite{[CS]}. The   fractional Laplacian can be interpreted as
\be\label{csextension}
(-\Delta)^{s}u=-C_{s}\lim\limits
_{y\rightarrow0_{+}}y^{1-2s}\partial_{y}u^{\ast},
\ee
where $C_{s}$ is a constant depending only on $s$ and $u^{\ast}$ satisfies
\be\label{ellipticprblom}\left\{\ba
  &\D(y^{1-2s}\nabla^{\ast} u^{\ast})=0 ~~\text{in}~~ \mathbb{R}^{n+1}_{+},\\  &u^{\ast}|_{y=0}=u.\ea\right.
\ee
Furthermore, the Poisson   formula below is valid
\be\label{PF}u^{\ast}(x,y)=C_{n,s}
\int_{\mathbb{R}^{n}}\f{y^{2s}u(\xi)}{(|x-\xi|^{2}+y^{2})^{\f{n+2s}{2}}}
d\xi,\ee
where $C_{n,s}$ is a constant depending only on $n$ and $s$.
In addition, from  Section 3.2 in \cite{[CS]}, an equivalent definition of the $\dot{H}^{s}$ norm reads
\be\label{eqnorm}\|u\|^{2} _{\dot{H}^{s}}= \int_{\mathbb{R}^{n}} |\xi|^{2s}|\hat{u}(\xi)|^{2}d\xi=\int_{\mathbb{R}_{+}^{n+1}} y^{1-2s} | \nabla^{\ast} u^{\ast} |^{2}dxdy.
\ee
As a by-product of \eqref{ellipticprblom}, for any $v |_{y=0}=u$, it holds
\be\label{mini}
\int_{\mathbb{R}_{+}^{n+1}} y^{1-2s} | \nabla^{\ast} u^{\ast} |^{2}dxdy\leq \int_{\mathbb{R}_{+}^{n+1}} y^{1-2s} | \nabla^{\ast}v  |^{2}dxdy.
\ee
With  \eqref{eqnorm} and \eqref{mini} in hand, one can prove the following inequalities  frequently used later:
\begin{align}
&\|u-\overline{u}_{\mu}\|_{L^{\f{2n}{n-2s}}(B(\mu/2))}\leq C\B(\int_{B^{\ast}(\mu)}y^{1-2s}|\nabla^{\ast} u^{\ast}|^{2}\Big)^{1/2},\label{keyinq1}\\
&\|u\|_{L^{\f{2n}{n-2s}}(B(\mu/2))}\leq C\B(\int_{B^{\ast}(\mu)}y^{1-2s}|\nabla^{\ast} u^{\ast}|^{2}\Big)^{1/2}+C\mu^{-s}
\B(\int_{B(\mu)}|u|^{2}\B)^{1/2},\label{keyinq2}
\\
&\|u\|_{L^{2,\,\f{2n}{n-2s}}(Q(\mu/2))}\leq C\B(\iint_{Q^{\ast}(\mu)}y^{1-2s}|\nabla^{\ast} u^{\ast}|^{2}\Big)^{1/2}+C
\B(\sup_{-\mu^{2s}\leq t<0}\int_{B(\mu)}|u|^{2}\B)^{1/2},\label{keyinq7}
\\
&\int_{B^{\ast}(\mu)}y^{1-2s}|u^{\ast}|^{2}\leq C\mu^{2-2s}
\int_{B(\mu)}|u|^{2}+C\mu^{2}\int_{B^{\ast}(\mu)}
y^{1-2s}|\nabla^{\ast} u^{\ast}|^{2},\label{keyinq3}
\\
&\iint_{Q^{\ast}(\mu)}y^{1-2s}|u^{\ast}|^{2}\leq C\mu^{2-2s}
\iint_{Q(\mu)}|u|^{2}+C\mu^{2}\iint_{Q^{\ast}(\mu)}
y^{1-2s}|\nabla^{\ast} u^{\ast}|^{2},\label{keyinq5}\\
&\|u\|_{L^{\f{2n}{n-2s}}(B(2/3))}\leq C\B(\int_{B^{\ast}(1)}y^{1-2s}|\nabla^{\ast} u^{\ast}|^{2}\Big)^{1/2}+C
\B(\int_{B(1)}|u|^{2}\B)^{1/2},\label{keyinq6}\\
&\|u\|_{L^{2,\,\f{2n}{n-2s}}(Q(2/3))}\leq C\B(\iint_{Q^{\ast}(1)}y^{1-2s}|\nabla^{\ast} u^{\ast}|^{2}\Big)^{1/2}+C
\B(\sup_{-\mu^{2s}\leq t<0}\int_{B(1)}|u|^{2}\B)^{1/2}.\label{keyinq8}
\end{align}
For the proof, we refer the reader to \cite{[TY1]} and Appendix \ref{appendix}.

Now  we  present the definition of the suitable weak solution to the MHD equations \eqref{GMHD}.
\begin{definition}\label{defi}
A  triplet  $(u,\,h,\,p)$ is called a suitable weak solution to the generalized  MHD equations \eqref{GMHD} provided the following conditions are satisfied,
\begin{enumerate}[(1)]
\item $u,h\in L^{\infty}(0,\,T;\,L^{2}(\mathbb{R}^{n}))\cap L^{2}(0,\,T;\,\dot{H}^{\alpha}(\mathbb{R}^{n})),p\in
L^{3/2}(0,T;L^{3/2}(\mathbb{R}^{n})).$\label{SWS1}
\item$(u,~h,~p)$~solves (\ref{GMHD}) in $\mathbb{R}^{n}\times (0,T) $ in the sense of distributions.\label{SWS2}
\item$(u,~h,~p)$ satisfies the following inequality
\begin{align}
&\int_{\mathbb{R}^{n}}(|u|^{2}+|h|^{2})\varphi_{1}(x,t)
+2C\int^{t}_{-r^{2\alpha}}\int_{\mathbb{R}_{+}^{n+1}}  \varphi_{2}(x,y,t) y^{1-2\alpha}
(|\nabla^{\ast}u^{\ast}|^{2}+|\nabla^{\ast}h^{\ast}|^{2})\nonumber\\ \leq& C \int^{t}_{-r^{2\alpha} }\int_{\mathbb{R}_{+}^{n+1}}
(|u^{\ast}|^{2}+|h^{\ast}|^{2})\D(y^{1-2\alpha}\nabla^{\ast}( \varphi_{2}))\nonumber\\&
+\int^{t}_{-r^{2\alpha} }\int_{\mathbb{R}^{n}}(|u|^{2}+|h|^{2})
\Big[\partial_{t}\varphi_{1}+ C\lim\limits_{y\rightarrow0^{+}}y^{1-2\alpha}\partial_{y}( \varphi_{2})\Big]
\nonumber\\&+ \int^{t}_{-r^{2\alpha} }
\int_{\mathbb{R}^{n}}u\cdot\nabla\varphi_{1} (|u|^{2}+|h|^{2}+2p)-2 \int^{t}_{-r^{2\alpha} }
\int_{\mathbb{R}^{n}}h\cdot\nabla\varphi_{1} (u\cdot h)+2\int^{t}_{-r^{2\alpha} }\int_{\mathbb{R}^{n}}u\cdot f\varphi_{1}, \label{loc} 
\end{align}
where $\varphi_{1}(x,t)\in C_{0}^{\infty}(\mathbb{R}^{n}\times (0,T) )$ and $\lim\limits_{y\rightarrow0_{+}}\varphi_{2}(x,y,t)=\varphi_{1}(x,t)$.\label{SWS3}
\end{enumerate}
\end{definition}
A point $(x,t)$ is said to be a regular point of the suitable weak solutions to system \eqref{GMHD}  if one has the
boundedness of $(u,\,h)$ in some neighborhood of $(x,t)$; the remaining points are called  singular point denoted by $\mathcal{S}$.

Before we present the decay type lemmas, according to
the natural scaling property of system \eqref{GMHD}, we  introduce the following dimensionless quantities:
\begin{align}
&E_{\ell}(u,r)=\frac{1}{r^{n+2\alpha-(2\alpha-1)\ell}}\iint_{Q(r)}|u|^{\ell}dx dt,\nonumber\\
&P_{\ell}(r)=\frac{1}{r^{n+2\alpha-(4\alpha-2)\ell}}\iint_{Q(r)}|p-\overline{p}_{r}|^{\ell}dx
dt, \nonumber\\
&F_{\ell}(r)=\frac{1}{r^{n+2\alpha-(4\alpha-1)\ell}}\iint_{Q(r)}|f|^{\ell}dxdt, \nonumber
\end{align}
and
\begin{align}
&E_{\ast}(u,r)=\frac{1}{r^{n+2-4\alpha}}\iint_{Q^{\ast}(r)}y^{1-2\alpha}|\nabla^{\ast} u^{\ast}|^2dxdydt,& &E(u,r)=\sup_{-r^{2\alpha}\leq   t<0}\frac{1}{r^{n+2-4\alpha}}\int_{B(r)}|u|^2dx,\nonumber
\\
&E_{\ast}(r)=E_{\ast}(u,r)+E_{\ast}(h,r),&& E(r)=E(u,r)+E(h,r).\nonumber\vspace{-4.2mm}
\end{align}
Note that, by the H\"older inequality, it is enough  to prove Theorem \ref{the1.4} for the borderline case $ n/\ell +2\alpha/q=2\alpha$. Hence, we introduce the     dimensionless quantities below
\begin{align}
&E_{q,\ell}(u,\,r)=
r^{-1}\Big(\int^{0}_{-r^{2\alpha}}\Big(\int_{B(r)}|u|^{\ell}dx\Big)
^{\f{q}{\ell}}dt\Big)^{\f{1}{q}},\nonumber\\
&E_{q,\ell}(r)=E_{q,\ell}(u,r)+E_{q,\ell}(h,r),\nonumber\\
&P_{q,\ell}(r)=
r^{-(n+2-4\alpha)}\Big(\int^{0}_{-r^{2\alpha}}\Big(\int_{B(r)}|p-\overline{p}_{r}|^{\ell}dx\Big)
^{\f{q}{\ell}}dt\Big)^{\f{1}{q}}.\nonumber
\end{align}
\begin{lemma}\label{ineq}
For $0<\mu\leq\f{1}{2}\rho$,~
there is an absolute constant $C$  independent of  $\mu$ and $\rho$,~ such that
 \begin{align}
 E_{3}(u,\mu) &\leq  C \left(\dfrac{\rho}{\mu}\right)^{\f{6+3n-12\alpha}{2}}
E^{\f{6\alpha-n}{4\alpha}}(\rho)E^{\f{n }{4\alpha}}_{\ast}(\rho)
    +C\left(\dfrac{\mu}{\rho}\right)^{6\alpha-3}E^{3/2}(u,\rho).\label{ineq1/2}
 \end{align}
\end{lemma}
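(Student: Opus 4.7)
The plan is to decompose $u = (u-\bar{u}_\rho) + \bar{u}_\rho$ on $B(\rho)$, estimate the oscillation $u-\bar{u}_\rho$ by a Sobolev--Poincar\'e type bound coming from the Caffarelli--Silvestre extension, and bound the mean $\bar{u}_\rho$ directly by Cauchy--Schwarz. This cleanly produces the two terms on the right-hand side.

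For the oscillation part, I would fix a time slice $t$, and interpolate $L^3$ between $L^2$ and $L^{2n/(n-2\alpha)}$: a short computation gives $\tfrac{1}{3} = \tfrac{1-\theta}{2} + \tfrac{(n-2\alpha)\theta}{2n}$ with $\theta = \tfrac{n}{6\alpha}$, so
\[
\int_{B(\mu)}|u-\bar{u}_\rho|^{3} \,dx \le \|u-\bar{u}_\rho\|_{L^{2}(B(\mu))}^{(6\alpha-n)/(2\alpha)}\,\|u-\bar{u}_\rho\|_{L^{2n/(n-2\alpha)}(B(\mu))}^{n/(2\alpha)}.
\]
Since $\mu \le \rho/2$, I can enlarge $B(\mu)$ to $B(\rho/2)$, apply the key Sobolev inequality \eqref{keyinq1} to the second factor, and use the trivial bound $\|u-\bar{u}_\rho\|_{L^2(B(\rho))}\le 2\|u\|_{L^2(B(\rho))}$ for the first. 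Next I would integrate in time; here is where the standing hypothesis $\alpha \ge n/4$ is essential, because it yields $n/(4\alpha)\le 1$, which allows H\"older in $t$:
\[
\int_{-\mu^{2\alpha}}^{0} g(t)^{n/(4\alpha)}\,dt \le \mu^{(4\alpha-n)/2}\Bigl(\int_{-\rho^{2\alpha}}^{0}g(t)\,dt\Bigr)^{n/(4\alpha)}
\]
applied with $g(t) = \int_{B^{\ast}(\rho)} y^{1-2\alpha}|\nabla^{\ast} u^{\ast}|^{2}\,dx\,dy$. Using $\sup_{t}\|u\|_{L^2(B(\rho))}^{2} \le \rho^{n+2-4\alpha} E(u,\rho)$ and $\iint_{Q^{\ast}(\rho)} y^{1-2\alpha}|\nabla^{\ast}u^{\ast}|^{2} = \rho^{n+2-4\alpha} E_{\ast}(u,\rho)$, and dividing through by $\mu^{n+3-4\alpha}$, a bookkeeping check on the exponents of $\mu$ and $\rho$ produces exactly the first term $(\rho/\mu)^{(6+3n-12\alpha)/2}\,E(\rho)^{(6\alpha-n)/(4\alpha)} E_{\ast}^{n/(4\alpha)}(\rho)$.

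For the mean part, Cauchy--Schwarz in space gives $|\bar{u}_\rho(t)| \le C\rho^{-n/2}\|u(t)\|_{L^{2}(B(\rho))}$, hence
\[
|\bar{u}_\rho(t)|^{3} \le C\rho^{-3n/2}\bigl(\rho^{n+2-4\alpha} E(u,\rho)\bigr)^{3/2} = C\rho^{3-6\alpha}\,E(u,\rho)^{3/2}.
\]
Since $|Q(\mu)| \simeq \mu^{n+2\alpha}$, integrating and dividing by $\mu^{n+3-4\alpha}$ yields $(\mu/\rho)^{6\alpha-3} E(u,\rho)^{3/2}$, which is the second term.

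The main obstacle, if any, is not conceptual but exponent bookkeeping, and making sure that the H\"older step in time is legitimate: one must verify that $n/(4\alpha)\le 1$, which is exactly why the hypothesis $n/4\le \alpha$ is imposed throughout the paper. Everything else is a direct assembly of \eqref{keyinq1}, interpolation, and the scaling of the dimensionless quantities $E(u,\rho)$ and $E_{\ast}(u,\rho)$.
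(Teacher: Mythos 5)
Your proposal is correct and follows essentially the same route as the paper's proof: the same decomposition $u=(u-\bar u_\rho)+\bar u_\rho$, the same $L^2$--$L^{2n/(n-2\alpha)}$ interpolation combined with \eqref{keyinq1}, the same H\"older step in time with exponent $n/(4\alpha)\le 1$, and the same exponent bookkeeping. Your explicit remark that the time-H\"older step is exactly where the standing hypothesis $\alpha\ge n/4$ enters is a useful clarification that the paper leaves implicit.
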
\begin{proof}
With the help of the triangle inequality, the H\"older inequality and \eqref{keyinq1}, we see that
\begin{align}\nonumber 
\int_{B(\mu)}|u|^{3} &\leq C\int_{B(\mu)}|u-\bar{u}_{{\rho}}|^{3}
+C\int_{B(\mu)}|\bar{u}_{{\rho}}|^{3} \\
&\leq C\B(\int_{B(\f{\rho}{2})}|u-\bar{u}_{{\rho}}|^{2}\B)
^{\f{6\alpha-n}{4\alpha}}
\B(\int_{B(\f{\rho}{2})}|u-\bar{u}_{{\rho}}|
^{\f{2n}{n-2\alpha}}dx\B)^{\f{ (n- 2\alpha)}{4\alpha}}+
 C\f{\mu^{n}}{\rho^{\f{3n}{2}}}\B( \int_{B(\rho)}|u|^{2}\B)^{3/2}\nonumber
\\
&\leq C\B(\int_{B(\rho)}|u-\bar{u}_{\rho}|^{2}dx\B)
^{\f{6\alpha-n}{4\alpha}}
 \B(\int_{B^{\ast}(\rho)}y^{1-2\alpha}|\nabla^{\ast} u^{\ast}|^{2}dxdy\B)^{\f{n}{4\alpha}}+ C\f{\mu^{n}}{\rho^{\f{3n}{2}}}\B( \int_{B(\rho)}|u|^{2}\B)^{3/2}.\label{lem2.31} 
 \end{align}
 
Integrating in time on $(-\mu^{2\alpha},\,0)$ this inequality, we obtain
 \begin{align}
\iint_{Q(\mu)}|u|^{3}
\leq& C\Big(\sup_{-\mu^{2\alpha}\leq t\leq0}\int_{B(\rho)}|u|^{2}\Big)^{\f{6\alpha-n}{4\alpha}}
\int^{0}_{-\mu^{2\alpha}} \Big(\int_{B^{\ast}(\rho)}y^{1-2\alpha}|\nabla^{\ast} u^{\ast}|^{2}\Big)^{\f{n}{4\alpha}}\nonumber\\&+
  C\f{\mu^{n+2\alpha}}{\rho^{\f{3n}{2}}}\B(\sup_{-\rho^{2\alpha}\leq t\leq0}\int_{B(\rho)}|u|^{2}\B)^{3/2}\nonumber \\
\leq& C\mu^{ \f{4\alpha-n}{2}}\B(\sup_{-\rho^{2\alpha}\leq t\leq0}\int_{B(\rho)}|u |^{2}\B)
^{\f{6\alpha-n}{4\alpha}}
 \B(\iint_{Q^{\ast}(\rho)}y^{1-2\alpha}|\nabla^{\ast} u^{\ast}|^{2}\B)^{\f{n}{4\alpha}}\nonumber \\
 &+
 C\f{\mu^{n+2\alpha}}{\rho^{\f{3n}{2}}}\B(\sup_{-\rho^{2\alpha}\leq t\leq0}\int_{B(\rho)}|u|^{2}\B)^{3/2},\nonumber
 \end{align}
which leads to
$$
E_{3}(u,\mu) \leq  C \left(\dfrac{\rho}{\mu}\right)^{\f{6+3n-12\alpha}{2}}
E^{\f{6\alpha-n}{4\alpha}}(\rho)E^{\f{n }{4\alpha}}_{\ast}(\rho)
    +C\left(\dfrac{\mu}{\rho}\right)
    ^{6\alpha-3}E^{3/2}(u,\rho).
$$
 This achieves the proof of this lemma.
\end{proof}
To prove Theorems \ref{the1.2}, \ref{the1.3} and \ref{the1.4}, we need different decay type estimates involving the pressure.
\begin{lemma}\label{presure}
For $0<\mu\leq\f{1}{8}\rho$, there exists an absolute constant $C$  independent of $\mu$ and $\rho$ such that
\begin{align}
P_{\f{n+2\alpha}{n}}(\mu)&\leq C \left(\dfrac{\rho}{\mu}\right)^{\f{(2\alpha+n)(n+2-4\alpha)}{n}}
E^{\f{2\alpha }{n  }}(\rho)E_{\ast}(\rho)
+C
\left(\f{\mu}{\rho}\right)^{\f{8\alpha^{2}
+2\alpha(n-1)-n}{n}}P_{\f{n+2\alpha}{n}}(\rho),\label{presure2}
\\
P_{q',\ell'}(\mu)&\leq C \left(\dfrac{\rho}{\mu}\right)^{n+2-4\alpha}
E^{\f{1}{q}}(\rho)E^{1-\f{1}{q}}_{\ast}(\rho)
+C
\left(\f{\mu}{\rho}\right)
^{4\alpha-\f{n}{\ell}-1}P_{q',\ell'}(\rho),
\label{wzp}\\
P_{3/2}(\mu)&\leq C\left(\dfrac{\rho}{\mu}\right)^{\f{6+3n-12\alpha}{2}}
E^{\f{6\alpha-n}{4\alpha}}(\rho)E^{\f{n }{4\alpha}}_{\ast}(\rho)+C\left(\f{\mu}{\rho}\right)
^{\f{8\alpha-3}{2}}P_{3/2}(\rho),
\label{presure33}\\
P_{3/2}(\mu)\leq &
C\left(\f{\rho}{\mu}\right)
^{n+3-4\alpha}E(\rho)^{\f{1}{q}}E_{\ast}(\rho)
^{1-\f{1}{q}}E_{q,\ell}(\rho)
+C\left(\f{\mu}{\rho}\right)^{\f{8\alpha-3}{2}}P_{3/2}(\rho),
\label{fns}
\end{align}
where the pair $(q',\ell')$ is  the conjugate index of $(q,\,\ell)$ in \eqref{pq1.1}.
\end{lemma}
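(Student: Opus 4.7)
The plan is to split the pressure on each time slice into a Calder\'on--Zygmund part driven by the nonlinearity and a harmonic remainder, then feed in the Caffarelli--Silvestre Sobolev inequalities \eqref{keyinq7}--\eqref{keyinq8} via interpolation. Taking $\Div$ of the first equation of \eqref{GMHD} and using $\Div u = \Div h = 0$ produces $-\Delta p = \partial_i\partial_j(u_iu_j - h_ih_j)$ on $\mathbb{R}^n$. Fix a cutoff $\chi\in C_c^\infty(B(3\rho/4))$ with $\chi\equiv 1$ on $B(\rho/2)$, and decompose $p = p_1 + p_2$ on $B(\rho/2)$ with
\[
p_1(x,t) = \partial_i\partial_j K\star\bigl[\chi(u_iu_j - h_ih_j)\bigr](x,t),
\]
so that $p_2 = p - p_1$ is harmonic on $B(\rho/2)$ at every fixed time.

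Two generic bounds drive the argument. Calder\'on--Zygmund gives, for any $1 < r < \infty$,
\[
\|p_1(\cdot,t)\|_{L^r(\mathbb{R}^n)} \leq C\bigl(\|u(\cdot,t)\|_{L^{2r}(B(\rho))}^2 + \|h(\cdot,t)\|_{L^{2r}(B(\rho))}^2\bigr),
\]
while the interior gradient estimate for harmonic functions, applied to $p_2 - c$ for any constant $c$, yields whenever $\mu\leq\rho/8$
\[
\int_{B(\mu)}|p_2 - \overline{(p_2)}_\mu|^r\,dx \leq C(\mu/\rho)^{n+r}\int_{B(\rho)}|p - \bar p_\rho|^r\,dx + C\int|p_1|^r\,dx.
\]
Interpolating \eqref{keyinq7} against $L^\infty_t L^2_x$ gives, for every pair $(a,b)$ with $2\alpha/a + n/b = n/2$ and $2\leq a\leq\infty$,
\[
\|u\|_{L^{a,b}(Q(\rho/2))} \leq C\rho^{(n+2-4\alpha)/2}\,E(u,\rho)^{(1-2/a)/2}\bigl(E(u,\rho)+E_*(u,\rho)\bigr)^{1/a},
\]
and similarly for $h$. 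Composing the two halves of the decomposition with this interpolation bound and tracking the scaling produces all four estimates once $r$ is chosen correctly.

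For \eqref{presure2}, set $r = (n+2\alpha)/n$, so that after CZ the relevant space-time norm of $u$ is $L^{2r}_{t,x}$ with $a = b = 2r$; a direct exponent count yields the factor $(\rho/\mu)^{(n+2\alpha)(n+2-4\alpha)/n}E^{2\alpha/n}E_*$ from $p_1$ and $(\mu/\rho)^{(8\alpha^2+2\alpha(n-1)-n)/n}P_{(n+2\alpha)/n}(\rho)$ from the harmonic decay. For \eqref{wzp}, take $r = \ell'$; after CZ the $u$-factor sits in $L^{2q',2\ell'}_{t,x}$, which is admissible because the equality case of \eqref{pq1.1}, $2\alpha/q + n/\ell = 2\alpha$, dualizes to $2\alpha/q' + n/\ell' = n$. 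Unpacking the harmonic estimate in the $L^{q',\ell'}$ norm produces the decay factor $(\mu/\rho)^{4\alpha - n/\ell - 1}$. For \eqref{presure33}, use $r = 3/2$ and bound $\iint(|u|^3+|h|^3)$ by the same Gagliardo--Nirenberg split already used in the proof of Lemma \ref{ineq}, yielding the $E^{(6\alpha-n)/(4\alpha)}E_*^{n/(4\alpha)}$ factor together with harmonic decay $(\mu/\rho)^{(8\alpha-3)/2}$.

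The subtlest case is \eqref{fns}. Starting again from $\iint|p_1|^{3/2}\lesssim\iint(|u|^3+|h|^3)$, I would split $|u|^3 = |u|\cdot|u|^2$ and apply H\"older in $L^{q,\ell}_{t,x}$ against $L^{q',\ell'}_{t,x}$, letting the isolated factor of $u$ contribute $\|u\|_{L^{q,\ell}(Q(\rho))} = \rho\,E_{q,\ell}(u,\rho)$. The remaining $\|u\|_{L^{2q',2\ell'}(Q(\rho))}^2$ is controlled by $\rho^{n+2-4\alpha}E^{1/q}(E+E_*)^{1/q'}$ via the same interpolation, with admissibility once more guaranteed by $2\alpha/q + n/\ell = 2\alpha$. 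The first term of \eqref{fns} then emerges after normalizing, and the second comes from the harmonic part as in \eqref{presure33}. I expect this bookkeeping to be the main obstacle: the H\"older split must be arranged so that the exponent constraint \eqref{pq1.1} converts exactly into the admissibility condition of the interpolation, and so that the resulting powers of $E$, $E_*$, and $E_{q,\ell}$ match those stated on the right-hand side.
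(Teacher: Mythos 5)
Your overall route is the paper's own: localize the pressure equation, split $p$ into a Calder\'on--Zygmund part driven by the quadratic nonlinearity and a harmonic remainder, extract the $(\mu/\rho)$-decay from the interior gradient estimate for harmonic functions combined with the mean value theorem, and control the Calder\'on--Zygmund part through the Caffarelli--Silvestre Sobolev inequalities. The decay exponents you record for the harmonic remainder ($\f{8\alpha^{2}+2\alpha(n-1)-n}{n}$, $4\alpha-\f{n}{\ell}-1$, $\f{8\alpha-3}{2}$) are the correct ones, and your dualization $2\alpha/q'+n/\ell'=n$ is exactly the admissibility condition needed for the $L^{2q',2\ell'}$ interpolation.

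The gap is in the Calder\'on--Zygmund half. You feed $u_iu_j-h_ih_j$ itself into the Riesz kernel and bound $\|p_1\|_{L^r}$ by $\|u\|^2_{L^{2r}(B(\rho))}+\|h\|^2_{L^{2r}(B(\rho))}$, and your interpolation inequality for $\|u\|_{L^{a,b}}$ necessarily carries the factor $(E(u,\rho)+E_*(u,\rho))^{1/a}$, because \eqref{keyinq7} contains the lower-order term $\|u\|_{L^{\infty,2}}$. Tracking this through gives, for \eqref{presure2}, the factor $E^{2\alpha/n}(\rho)\bigl(E(\rho)+E_*(\rho)\bigr)$ rather than the stated $E^{2\alpha/n}(\rho)E_*(\rho)$; the extra piece $E^{(n+2\alpha)/n}(\rho)$ carries no factor of $E_*$ and is superlinear in $E$, so it cannot be absorbed in the iteration for Theorem \ref{the1.2}, where only $E_*$ is assumed small and the scheme is built around terms of the form $G^{\tau_1}E_*^{\tau_2}$ with $\tau_2>0$. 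The same mismatch infects \eqref{wzp} (you would get $E^{1/q}(E+E_*)^{1-1/q}$ instead of $E^{1/q}E_*^{1-1/q}$), \eqref{presure33} and \eqref{fns}. The fix is precisely what the paper does: since $\Div u=\Div h=0$, one has $\partial_i\partial_j(u_iu_j)=\partial_i\partial_j\bigl[(u_i-\overline{u_i}_{\rho/2})(u_j-\overline{u_j}_{\rho/2})\bigr]$, so the Calder\'on--Zygmund part may be taken as $-\partial_i\partial_jK\ast(\chi[U_{i,j}-H_{i,j}])$ with the spatial averages subtracted inside the quadratic terms; then Calder\'on--Zygmund produces $\|u-\overline{u}_{\rho/2}\|^2_{L^{2r}}$, and the mean-zero Sobolev inequality \eqref{keyinq1}, which has no lower-order term, yields exactly $E^{2\alpha/n}E_*$, $E^{1/q}E_*^{1-1/q}$ and $E^{(6\alpha-n)/4\alpha}E_*^{n/4\alpha}$. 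With that single change your bookkeeping goes through.
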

\begin{proof}
We consider the usual cut-off function $\phi\in C^{\infty}_{0}(B(\rho/2))$ such that $\phi\equiv1$ on $B(\f{3}{8}\rho)$ with $0\leq\phi\leq1$ and
$|\nabla\phi |\leq C\rho^{-1},~|\nabla^{2}\phi |\leq
C\rho^{-2}.$

Due to the incompressibility, we may write
$$
\partial_{i}\partial_{i}(p\phi)=-\phi \partial_{i}\partial_{j}\big[U_{i,j}-H_{i,j}\big]
+2\partial_{i}\phi\partial_{i}p+p\partial_{i}\partial_{i}\phi
,$$
where $U_{i,j}=(u_{j}- \overline{u_{j}} _{\rho/2})(u_{i}-\overline{u_{i}}_{\rho/2})$ and $H_{i,j}=(h_{j}-\overline{h_{j}}_{\rho/2})(h_{i}-\overline{h_{i}}_{\rho/2})$.
This yields that, for any $x\in B(\f{3}{8}\rho)$,
\be\ba\label{pp}
p(x)=&K \ast \{-\phi \partial_{i}\partial_{j}[U_{i,j}-H_{i,j}]
+2\partial_{i}\phi\partial_{i}p+p\partial_{i}\partial_{i}\phi
\}\\
=&-\partial_{i}\partial_{j}K \ast (\phi [U_{i,j}-H_{i,j}])\\
&+2\partial_{i}K \ast(\partial_{j}\phi[U_{i,j}-H_{i,j}])-K \ast
(\partial_{i}\partial_{j}\phi[U_{i,j}-H_{i,j}])\\
& +2\partial_{i}K \ast(\partial_{i}\phi p) -K \ast(\partial_{i}\partial_{i}\phi p)\\
=: &P_{1}(x)+P_{2}(x)+P_{3}(x).
\ea\ee
Since $\phi(x)=1$ when $x\in B(\rho/4$), we know that
\[
\Delta(P_{2}(x)+P_{3}(x))=0.
\]
 By the interior  estimate of harmonic function
and the H\"older inequality, we see that, for every
$ x_{0}\in B(\rho/8)$,
$$\ba
|\nabla (P_{2}+P_{3})(x_{0})|&\leq \f{C}{\rho^{n+1}}\|(P_{2}+P_{3})\|_{L^{1}(B_{x_{0}}(\rho/8))}
\\
&\leq \f{C}{\rho^{n+1}}\|(P_{2}+P_{3})\|_{L^{1}(B(\rho/4))}\\
&\leq \f{C}{\rho^{n+1}}\rho^{n(1-\f{1}{q})} \|(P_{2}+P_{3})\|_{L^{q}(B(\rho/4))}
,
\ea$$
which in turn implies
$$\|\nabla (P_{2}+P_{3})\|^{q}_{L^{\infty}(B(\rho/8))}\leq C \rho^{-(n+q)}\|(P_{2}+P_{3})\|^{q}_{L^{q}(B(\rho/4))}.$$
The latter inequality together with the  mean value theorem leads to, for any  $\mu\leq \f{1}{8}\rho$,
$$\ba
\|(P_{2}+P_{3})-\overline{(P_{2}+P_{3})}_{\mu}\|^{q}_{L^{q}(B(\mu))}\leq&
C\mu^{n} \|(P_{2}+P_{3})-\overline{(P_{2}+P_{3})}_{\mu}\|^{q}_{L^{\infty}(B(\mu))}\\
\leq& C
\mu^{n+q} \|\nabla (P_{2}+P_{3})\|^{q}_{L^{\infty}(B(\rho/8))}\\
\leq& C\Big(\f{\mu}{\rho}\Big)^{n+q}\|(P_{2}+P_{3})\|^{q}_{L^{q}
(B(\rho/4))}.
\ea$$
Integrating the latter relation in time on    $(-\mu^{2\alpha},\,0)$ for $q=\f{n+2\alpha}{n}$, we infer that
$$
\|(P_{2}+P_{3})-\overline{(P_{2}+P_{3})}_{\mu}\|
^{\f{n+2\alpha}{n}}_{L^{\f{n+2\alpha}{n}}(Q(\mu))}\leq C\Big(\f{\mu}{\rho}\Big)^{n+\f{n+2\alpha}{n}}
\|(P_{2}+P_{3})\|^{\f{n+2\alpha}{n}}_{L^{\f{n+2\alpha}{n}}(Q(\rho/4))}.
$$
Notice that $(P_{2}+P_{3})-\overline{(P_{2}+P_{3})}_{\rho/4}$ is also a harmonic function  on $B(\rho/4)$, then the following estimate is valid
$$\ba
&\|(P_{2}+P_{3})-\overline{(P_{2}+P_{3})}_{\mu}\|
^{\f{n+2\alpha}{n}}_{L^{\f{n+2\alpha}{n}}(Q(\mu))}
\\
\leq & C\Big(\f{\mu}{\rho}\Big)^{n+\f{n+2\alpha}{n}}
\|(P_{2}+P_{3})-\overline{(P_{2}+P_{3})}_{\rho/4}\|
^{\f{n+2\alpha}{n}}
_{L^{\f{n+2\alpha}{n}}(Q(\rho/4))}.
\ea$$
In the light  of   the triangle inequality, we have
$$\ba
&\|(P_{2}+P_{3})-\overline{(P_{2}+P_{3})}_{\rho/4}\|_{L^{\f{n+2\alpha}{n}}(Q(\rho/4))}\\
\leq& \|p-\overline{p}_{\rho/4}\|_{L^{\f{n+2\alpha}{n}}(Q(\rho/4))}
+\|P_{1}-\overline{P_{1}}_{\rho/4}\|_{L^{\f{n+2\alpha}{n}}(Q(\rho/4))}
\\
\leq& C\|p-\overline{p}_{\rho}\|_{L^{\f{n+2\alpha}{n}}(Q(\rho/4))}
+C\|P_{1}\|_{L^{\f{n+2\alpha}{n}}(Q(\rho/4))},
\ea$$
which in turns  yields
\be\label{p2rou}\ba
&\|(P_{2}+P_{3})-\overline{(P_{2}+P_{3})}_{\mu}\|
^{\f{n+2\alpha}{n}}_{L^{\f{n+2\alpha}{n}}(Q(\mu))}\\
\leq& C\Big(\f{\mu}{\rho}\Big)^{n+\f{n+2\alpha}{n}}\Big(\|p-\overline{p}
_{\rho}\|^{\f{n+2\alpha}{n}}_{L^{\f{n+2\alpha}{n}}(Q(\rho))}
+\|P_{1}\|^{\f{n+2\alpha}{n}}_{L^{\f{n+2\alpha}{n}}(Q(\rho/4))}\Big).
\ea
\ee
Utilizing the H\"older inequality and \eqref{keyinq1}, we see that
$$\ba
\int_{B(\rho/2)}|u-\overline{u}_{\rho/2}|^{\f{2(n+2\alpha)}{n}}
&\leq
C\B(\int_{B(\rho/2)}|u-\bar{u}_{\rho/2}|^{2}dx\B)
^{\f{2\alpha}{n}}
 \B(\int_{B(\rho/2)}|u-\bar{u}_{\rho/2}|^{\f{2n}{n-2\alpha}} dx\B)^{\f{n-2\alpha}{n}}\\
 &\leq
C\B(\int_{B(\rho/2)}|u|^{2}dx\B)
^{\f{2\alpha}{n}}
 \B(\int_{B(\rho/2)}|u-\bar{u}_{\rho}|^{\f{2n}{n-2\alpha}} dx\B)^{\f{n-2\alpha}{n}}\\
 & \leq
C\B(\int_{B(\rho)}|u|^{2}dx\B)
^{\f{2\alpha}{n}}
\B(\int_{B^{\ast}(\rho)}y^{1-2\alpha}|\nabla^{\ast} u^{\ast}|^{2}dx\B).
\ea$$
According to the classical Calder\'on-Zygmund theorem and the latter inequality, we get that
\be\label{lem2.4.2}\ba
\int_{B(\rho/4)}|P_{1}(x)|^{\f{n+2\alpha}{n}}dx
\leq& C \int_{B(\rho/2)}|u-\overline{u}_{\rho/2}|^{\f{2(n+2\alpha)}{n}}
+|h-\overline{h}_{\rho/2}|^{\f{2(n+2\alpha)}{n}}dx\\
\leq&  C\B(\int_{B(\rho)}|u|^{2}dx\B)
^{\f{2\alpha}{n}}
 \B(\int_{B^{\ast}(\rho)}y^{1-2\alpha}|\nabla^{\ast} u^{\ast}|^{2}dx\B)\\&
 +C\B(\int_{B(\rho)}|h|^{2}dx\B)
^{\f{2\alpha}{n}}
 \B(\int_{B^{\ast}(\rho)}y^{1-2\alpha}|\nabla^{\ast} h^{\ast}|^{2}dx\B),
 \ea\ee
which obviously implies that, for any  $\mu\leq \f{1}{8}\rho$,
\be\label{lem2.4.3}\ba
\int_{B(\mu)}|P_{1}(x)|^{\f{n+2\alpha}{n}}dx \leq &C\B(\int_{B(\rho)}|u|^{2}dx\B)
^{\f{2\alpha }{n}}
 \B(\int_{B^{\ast}(\rho)}y^{1-2\alpha}|\nabla^{\ast} u^{\ast}|^{2}dx\B)\\&
 +C\B(\int_{B(\rho)}|h|^{2}dx\B)
^{\f{2\alpha}{n}}
 \B(\int_{B^{\ast}(\rho)}y^{1-2\alpha}|\nabla^{\ast} h^{\ast}|^{2}dx\B).
 \ea\ee
Let $\tau=(n+2-4\alpha)(1+\f{2\alpha}{n})$, then, it follows from  \eqref{p2rou}-\eqref{lem2.4.3} that
\be\label{lem2.42}
\ba
&\f{1}{\mu^{\tau}}\iint_{Q(\mu)}|p-\overline{p}_{\mu}|^{\f{n+2\alpha}{n}}
\\
 \leq&   C\B(\f{\rho}{\mu}\B)^{\tau}
 \B(\f{1}{\rho^{n+2-4\alpha}}\sup_{-\rho^{2\alpha}\leq t<0}\int_{B(\rho)}|u|^{2}\B)
^{\f{2\alpha }{n}}
 \B(\f{1}{\rho^{n+2-4\alpha}}\iint_{Q^{\ast}(\rho)}y^{1-2\alpha}|\nabla^{\ast} u^{\ast}|^{2}\B) \\&
 + C\B(\f{\rho}{\mu}\B)^{\tau}
  \B(\f{1}{\rho^{n+2-4\alpha}}\sup_{-\rho^{2\alpha}\leq t<0}\int_{B(\rho)}|h|^{2}\B)
^{\f{2\alpha }{n}}
 \B(\f{1}{\rho^{n+2-4\alpha}}\iint_{Q^{\ast}(\rho)}y^{1-2\alpha}|\nabla^{\ast} h^{\ast}|^{2}\B)
 \\
& +C\left(\f{\mu}{\rho}\right)
^{\f{8\alpha^{2}+2\alpha(n-1)-n}{n}}
\f{1}{\rho^{\tau}}\iint_{Q(\rho)}
|p-\overline{p}_{\rho}|^{\f{n+2\alpha}{n}},
\ea
\ee
which turns out that
\begin{align}
P_{\f{n+2\alpha}{n}}(\mu)&\leq C\left(\dfrac{\rho}{\mu}\right)^{\f{(2\alpha+n)(n+2-4\alpha)}{n}}
E^{\f{2\alpha }{n  }}(\rho)E_{\ast}(\rho)
+C
\left(\f{\mu}{\rho}\right)^{\f{8\alpha^{2}
+2\alpha(n-1)-n}{n}}P_{\f{n+2\alpha}{n}}(\rho).\label{presure4}
\end{align}
In view of
\eqref{keyinq1}, we know that
\be\label{wzi1}\ba
\|u-\overline{u}_{\rho/2}\|_{L^{2q',\,2\ell'}(Q(\rho/2))}
&\leq C  \|u-\overline{u}_{\rho/2}\|^{1-\f{1}{q'}}_{L^{\infty,\,2} (Q(\rho/2))}
\| u-\overline{u}_{\rho/2}\|^{\f{1}{q'}}_{L^{2,\,\f{2n}{n-2\alpha}}(Q(\rho/2))}\\
&\leq C  \|u\|^{\f{1}{q}}_{L^{\infty,\,2}(Q(\rho))}
 \|y^{1/2-\alpha}\nabla^{\ast} u^{\ast}\|_{L^2(Q^{\ast}(\rho))}^{1-\f{1}{q}}
, \ea\ee
where  $\alpha/q' + n/(2\ell') = n/2. $ A slight modification about the proof of \eqref{lem2.42} together with
the latter inequality \eqref{wzi1} gives \eqref{wzp}.

Next, we turn our attention to  proving \eqref{fns}.   It follows from the H\"older inequality and \eqref{wzi1}
that
\be\label{wzi2}\ba
\|u-\overline{u}_{\rho/2}\|^{3}_{L^{3}(Q(\rho/2))}
\leq& C\|u-\overline{u}_{\rho/2}\|_{L^{q,\,\ell}(Q(\rho/2))}
\|u-\overline{u}_{\rho/2}\|^{2}_{L^{2q',\,2\ell'}(Q(\rho/2))}\\
\leq& C\|u\|_{L^{q,\,\ell}(Q(\rho))}
\|u\|^{\f{2}{q}}_{L^{\infty,\,2} (Q(\rho))}
 \|y^{1/2-\alpha}\nabla^{\ast} u^{\ast}\|_{L^2(Q^{\ast}(\rho))}^{2-\f{2}{q}},
 \ea\ee
 where $\alpha/q' + n/(2\ell') = n/2. $
 Along the exact same line as in the proof of \eqref{lem2.42}, we obtain  \eqref{fns}.

It remains to show \eqref{presure33}. Indeed, in the light of the H\"older inequality and \eqref{keyinq1}, we get
$$\ba
\int_{B(\rho/2)}|u-\bar{u}_{{\rho/2}}|^{3}
&\leq C\B(\int_{B(\f{\rho}{2})}|u-\bar{u}_{{\rho/2}}|^{2}\B)
^{\f{6\alpha-n}{4\alpha}}
\B(\int_{B(\f{\rho}{2})}|u-\bar{u}_{{\rho/2}}|
^{\f{2n}{n-2\alpha}}\B)^{\f{ (n- 2\alpha)}{4\alpha}}
\\
&\leq C\B(\int_{B(\rho)}|u|^{2}\B)
^{\f{6\alpha-n}{4\alpha}}
 \B(\int_{B^{\ast}(\rho)}y^{1-2\alpha}|  \nabla ^{\ast} u^{\ast}|^{2}\B)^{\f{n}{4\alpha}}.
 \end{aligned}
$$
Integrating this inequality in time, we deduce that
$$ \begin{aligned}
\int_{-\mu^{2\alpha}}^{0}\int_{B(\mu)}|u-\bar{u}_{{\rho/2}}|^{3}
 &\leq C\Big(\sup_{-\mu^{2\alpha}\leq t\leq0}\int_{B(\rho)}|u|^{2}\Big)^{\f{6\alpha-n}{4\alpha}}
\int^{0}_{-\mu^{2\alpha}} \Big(\int_{B^{\ast}(\rho)}y^{1-2\alpha}|\nabla ^{\ast} u^{\ast}|^{2}\Big)^{\f{n}{4\alpha}}  \\
 &\leq C\mu^{ \f{4\alpha-n}{2}}\B(\sup_{-\rho^{2\alpha}\leq t\leq0}\int_{B(\rho)}|u |^{2}\B)
^{\f{6\alpha-n}{4\alpha}}
 \B(\iint_{Q^{\ast}(\rho)}y^{1-2\alpha}|\nabla ^{\ast} u^{\ast}|^{2}\B)^{\f{n}{4\alpha}}.
 \end{aligned}
$$
With this inequality in hand, the rest proof of \eqref{presure33} is parallel to the one of  \eqref{presure4}.
Thus, the proof of this lemma is   completed.
\end{proof}
A slight variant of the proof of
\cite[Lemma 3.2, p.786]{[CKN]} yields
the following lemma .
\begin{lemma}\label{inp}
For any $\mu\leq1/2\rho$,
a constant $C$ independent of $\mu$ and $\rho$ exists such that  the following estimate is valid
\begin{align}
&\f{1}{\mu^{n+3-4\alpha}}\iint_{\tilde{Q}(\mu)} |u||p-\overline{p}_{\tilde{B}(\mu)}|
\nonumber\\\leq &C\Big(\f{1}{\mu^{n+3-4\alpha}}\iint_{\tilde{Q}(\mu)} |u|^{3}\Big)^{1/3}\Big(\f{1}{\mu^{n+3-4\alpha}}
\iint_{\tilde{Q}(2\mu)} |u|^{3}+|h|^{3}\Big)^{2/3}\nonumber
\\&+C\mu^{4\alpha-1} \Big(\f{1}{\mu^{n+3-4\alpha}}\iint_{\tilde{Q}(\mu)} |u|^{3}\Big)^{1/3}\sup_{-\mu^{2\alpha}\leq t-t_{0}<0}
\int_{2\mu<|y-x_{0}|<\rho}\f{|u|^{2}+|h|^{2}}{|y-x_{0}|^{n+1}}dy\nonumber\\&
+ C\Big(\f{\mu}{\rho}\Big)^{\f{8}{3}\alpha-1}\Big(\f{1}{\mu^{n+3-4\alpha}}
\iint_{\tilde{Q}(\mu)} |u|^{3}\Big)^{1/3} \Big(\f{1}{\rho^{n+3-4\alpha}}\iint_{\tilde{Q}(\rho)} |u|^{3}+|h|^{3}\Big)^{2/3}\nonumber\\&
+C\Big(\f{\mu}{\rho}\Big)^{\f{8}{3}\alpha-1}
\Big(\f{1}{\mu^{n+3-4\alpha}}\iint_{\tilde{Q}(\mu)} |u|^{3}\Big)^{1/3}\Big(\f{1}{\rho^{n+3-4\alpha}}\iint_{\tilde{Q}(\rho)} |p|^{3/2}\Big)^{2/3}.\nonumber
\end{align}
\end{lemma}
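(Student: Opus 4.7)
The approach adapts the Caffarelli--Kohn--Nirenberg pressure splitting \cite[Lemma 3.2]{[CKN]} to the MHD setting. Translate so that $(x_0,t_0)=(0,0)$, write $\tilde B(r)=B(r)$ throughout, and set $U_{ij}:=u_iu_j-h_ih_j$, so that $-\Delta p=\partial_i\partial_j U_{ij}$ (the $|h|^2/2$ contribution to the total pressure is absorbed into the subtracted mean). Pick a cutoff $\eta\in C_c^{\infty}(B(\rho))$ with $\eta\equiv 1$ on $B(\rho/2)$ and decompose
\[
p=p_1+p_2+p_3,
\]
where $p_1(x,t):=-\partial_i\partial_j K *\bigl(\eta\,\mathbf{1}_{B(2\mu)}\,U_{ij}\bigr)(x,t)$, $p_2(x,t):=-\partial_i\partial_j K *\bigl(\eta(1-\mathbf{1}_{B(2\mu)})\,U_{ij}\bigr)(x,t)$, and $p_3:=p-p_1-p_2$ is harmonic in $x$ on $B(\rho/2)$ for a.e.\ $t$ (since $-\Delta p_3=\partial_i\partial_j U_{ij}-\partial_i\partial_j(\eta U_{ij})=0$ where $\eta=1$). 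Bound $\iint_{Q(\mu)}|u|\,|p-\overline{p}_{B(\mu)}|$ by summing the contributions of the three pieces, using H\"older $L^3\times L^{3/2}$ on each.

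For $p_1$, the classical Calder\'on--Zygmund estimate yields $\|p_1\|_{L^{3/2}(Q(\mu))}\le C\bigl(\iint_{Q(2\mu)}|u|^3+|h|^3\bigr)^{2/3}$, which together with $\|u\|_{L^3(Q(\mu))}=\mu^{(n+3-4\alpha)/3}E_3(u,\mu)^{1/3}$ produces the first term after the normalization $\mu^{-(n+3-4\alpha)}$. For $p_2$, the kernel is evaluated away from its singularity, so the mean value theorem gives $|\partial_i\partial_j K(x-y)-\partial_i\partial_j K(-y)|\le C|x||y|^{-(n+1)}$ for $x\in B(\mu)$ and $|y|\ge 2\mu$, whence
\[
|p_2(x,t)-\overline{p_2}(t)|\le C\mu\int_{2\mu<|y|<\rho}\frac{|u|^2+|h|^2}{|y|^{n+1}}(y,t)\,dy.
\]
Pairing this with $|u|$ and using $|Q(\mu)|^{2/3}=\mu^{2(n+2\alpha)/3}$ together with $\mu^{-(n+3-4\alpha)}$ collects the powers $1+2(n+2\alpha)/3+(n+3-4\alpha)/3-(n+3-4\alpha)=4\alpha-1$, producing the second term. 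For the harmonic $p_3$, the interior $L^\infty$ mean-value estimate and the gradient bound for harmonic functions give
\[
\|p_3-\overline{p_3}\|_{L^{3/2}(B(\mu))}\le C\Bigl(\tfrac{\mu}{\rho}\Bigr)^{1+2n/3}\|p_3\|_{L^{3/2}(B(\rho/2))}.
\]
Integrating in $t$ (extending freely from $(-\mu^{2\alpha},0)$ to $(-\rho^{2\alpha},0)$) and using
\[
\|p_3\|_{L^{3/2}(Q(\rho/2))}\le \|p-\overline{p}_{Q(\rho)}\|_{L^{3/2}(Q(\rho))}+C\Bigl(\iint_{Q(\rho)}|u|^3+|h|^3\Bigr)^{2/3}
\]
(Calder\'on--Zygmund on the non-harmonic parts extended to $Q(\rho)$), then pairing with $\|u\|_{L^3(Q(\mu))}$, produces the final two terms, with the prefactor $(\mu/\rho)^{8\alpha/3-1}$ arising from the identity $(2n/3+1)-\tfrac{2}{3}(n+3-4\alpha)=\tfrac{8\alpha}{3}-1$.

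The main obstacle is the exponent bookkeeping in the harmonic step: one must verify that the geometric factor $(2n/3+1)$ from the harmonic interior estimate combines with the normalizations $\mu^{-(n+3-4\alpha)}$ and $\rho^{-(n+3-4\alpha)}$ (distributed across the $L^3$ and $L^{3/2}$ factors in the $1/3,2/3$ proportions forced by H\"older) to produce exactly $(\mu/\rho)^{8\alpha/3-1}$, independently of the ambient dimension $n=2,3$. No genuinely new idea beyond the Navier--Stokes case is needed: the magnetic stress $-h_ih_j$ enters the elliptic pressure representation exactly as $u_iu_j$, and the fractional dissipation influences the estimate only through the parabolic time size $\mu^{2\alpha}$, which is why the bound depends on $\alpha$ solely through the normalizing powers.
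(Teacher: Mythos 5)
Your proposal is correct and follows essentially the same route as the paper's proof: the same CKN-type decomposition of $p$ into a near-field Calder\'on--Zygmund piece supported in $B(2\mu)$, a far-field piece estimated pointwise through the kernel decay $|y-x_0|^{-(n+1)}$, and a cutoff-induced remainder, with the same exponent bookkeeping leading to $\mu^{4\alpha-1}$ and $(\mu/\rho)^{\frac{8}{3}\alpha-1}$. The only cosmetic difference is that the paper bounds the gradients of its remainder terms $P_2,P_3$ directly from their explicit convolution representations against $\nabla\psi$ (whose support is an annulus away from $\tilde{B}(\mu)$), whereas you treat the remainder abstractly as a harmonic function and invoke interior estimates plus a triangle inequality; both give the same final two terms.
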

\begin{proof}
Replace the cut-off function $\phi$  in \eqref{pp} with $\psi$
chosen such that
$\psi(y)=1~~\text{on}~~ B (x_{0},\,3/4\rho) ,\psi(y)=0~~\text{on}~~ B^{c}(x_{0},\,\rho)$ and $\rho^{k}|\nabla^{k}\psi |\leq C$ ($k=1,\,2$).
First, we may write
\begin{align}
P_{1}=&-\partial_{i}\partial_{j}K \ast (\psi[U_{i,j}-H_{i,j}])\nonumber\\
=& \int_{|y-x_{0}|<2\mu}\partial_{i,j}\B[ {K(|x-y|)}\B]\psi[U_{i,j}-H_{i,j}]
\nonumber\\&+\int_{|y-x_{0}|\geq2\mu}\partial_{i,j}\B[ {K(|x-y|)}\B][\psi(U_{i,j}-H_{i,j})]
\nonumber\\=:&P_{11}+P_{12}.
\nonumber
\end{align}
Due to the classical Calder\'on-Zygmund theorem, for any $\mu\leq 1/2\rho$, we see that
$$\|P_{11}\|_{L^{3/2}(\tilde{B}(\mu))}\leq C (\|u\|^{2}_{L^{3}(\tilde{B}(2\mu))}
+\|h\|^{2}_{L^{3}(\tilde{B}(2\mu))}),$$
which together with the H\"older  inequality  implies that
$$\ba
\int_{\tilde{B}(\mu)}|u||P_{11}
-\overline{P_{11}}_{\tilde{B}(\mu)}|&\leq
\int_{\tilde{B}(\mu)}|u||P_{11}|+\int_{\tilde{B}(\mu)}
|u||\overline{P_{11}}_{\tilde{B}(\mu)}|\\
&\leq \|u\|_{L^{3}(\tilde{B}(\mu))}\|P_{11}\|_{L^{3/2}(B(\mu))}+
\|u\|_{L^{3}(\tilde{B}(\mu))}\|P_{11}\|_{L^{3/2}(\tilde{B}(\mu))}\\
&\leq C\|u\|_{L^{3}(\tilde{B}(\mu))}
(\|u\|^{2}_{L^{3}(\tilde{B}(2\mu))}+\|h\|^{2}_{L^{3}(\tilde{B}(2\mu))}).
\ea$$
By a straightforward computation, for any $|x-x_{0}|\leq \mu$, we have
\begin{align}
|\nabla P_{12}(x)|&\leq C\int_{2\mu<|y-x_{0}|<\rho}\f{|u|^{2}+|h|^{2}}{|y-x_{0}|^{n+1}}dy,\nonumber\\
|\nabla P_{2}(x)|&\leq C\rho^{-(n+1)}\int_{\tilde{B}(\rho)} |u|^{2}+|h|^{2}\leq C\rho^{-(2n+3)/3}\B(\int_{\tilde{B}(\rho)} |u|^{3}+|h|^{3}\B)^{2/3},\nonumber\\
|\nabla P_{3}(x)|&\leq C\rho^{-(n+1)}\int_{\tilde{B}(\rho)} |p|\leq C\rho^{-(2n+3)/3}\B(\int_{\tilde{B}(\rho)} |p|^{3/2}\B)^{2/3}.\nonumber
\end{align}
Using the H\"older inequality and the mean value theorem, we observe that
$$
\ba
\int_{\tilde{B}(\mu)}|u||P_{12}-\overline{P_{12}}_{\tilde{B}(\mu)}|&\leq
C\mu^{2n/3}\Big(\int_{\tilde{B}(\mu)}|u|^{3}\Big)^{1/3}
\sup_{x\in \tilde{B}(\mu)}|P_{12}-\overline{P_{12}}_{\tilde{B}(\mu)}|\\
&\leq
C\mu^{(2n+3)/3}\Big(\int_{\tilde{B}(\mu)}|u|^{3}\Big)^{1/3}
\sup_{x\in \tilde{B}(\mu)}|\nabla P_{12}|\\
&\leq
C\mu^{(2n+3)/3}\Big(\int_{\tilde{B}(\mu)}|u|^{3}\Big)^{1/3}
\int_{2\mu<|y-x_{0}|<\rho}\f{|u|^{2}+|h|^{2}}{|y-x_{0}|^{n+1}}dy.
\ea$$
By similar arguments, we can get
$$
\ba
\int_{\tilde{B}(\mu)}|u||P_{2}-\overline{P_{2}}_{\tilde{B}(\mu)}|
&\leq
C\Big(\f{\mu}{\rho}\Big)^{(2n+3)/3}\Big(\int_{\tilde{B}(\mu)}|u|^{3}\Big)^{1/3}
\Big(\int_{\tilde{B}(\rho)} |u|^{3}+|h|^{3}\Big)^{2/3},\\
\int_{\tilde{B}(\mu)}|u||P_{3}-\overline{P_{3}}_{\tilde{B}(\mu)}|
&\leq
C\Big(\f{\mu}{\rho}\Big)^{(2n+3)/3}\Big(\int_{\tilde{B}(\mu)}|u|^{3}\Big)^{1/3}
\Big(\int_{\tilde{B}(\rho)} |p|^{3/2}\Big)^{2/3}
.\ea
$$
Putting together with the above estimates and integrating in time yield  the desired estimate.
\end{proof}

\section{Induction arguments}
\label{sec3}
\setcounter{section}{3}\setcounter{equation}{0}
Based on the induction arguments developed in \cite{[CKN],[TY1],[TY2]}, this section contains the proof of   Theorem \ref{the1.1}. Before proving Theorem \ref{the1.1}, we present a key proposition, which can be seen as the bridge between the previous step and the next step for the given statement in the induction arguments. Moreover, as a by-product of this proposition,  we obtain a corollary, which help us
to circumvent the
straightforward control of the terms involving the magnetic field $h$ and the extensions $u^{\ast}$, $h^{\ast}$ in the local energy inequality \eqref{loc}
  to conclude the proof of Theorems \ref{the1.3} and \ref{the1.4}.
\begin{prop}\label{keyinindu}
There is a constant $C$ such that the following result holds. For any given $(x_{0},\,t_{0})\in\mathbb{R}^{n}\times \mathbb{R}^{-}$ and $k_{0}\in\mathbb{N}$, we have
for any $k>k_{0}$,
\be\label{eq3.1}\ba
&\sup_{-r_{k}^{2\alpha}\leq t-t_0\leq 0}\fbxo(|u|^{2}+|h|^{2})
+r_{k}^{-n}\iint_{\tilde{Q}^{\ast}_{k}}y^{1-2\alpha}
(|\nabla^{\ast}u^{\ast}|^{2}+|\nabla^{\ast}h^{\ast}|^{2}) \\
\leq& C\sup_{-r_{k_{0}}^{2\alpha}\leq t-t_0\leq 0}\fbxozero(|u|^{2}+|h|^{2})+ Cr_{k_{0}}^{-n}\iint_{\tilde{Q}^{\ast}_{k_{0}}}y^{1-2\alpha}
(|\nabla^{\ast}u^{\ast}|^{2}+|\nabla^{\ast}h^{\ast}|^{2})\\
& +C\sum_{l=k_{0}}^{k}r_{l}^{2\alpha-1}\fqxol\Big(|u|^{3}+|h|^{3}+|u||p-\bar{p}_{l}|) +C\sum^{k}_{l=k_{0}}r_{l}^{\gamma'}\B(\fqxol |u|^{3}\B)^{1/3}\|f\|,\ea
\ee
where
$$(\gamma', \|f\|)=\left\{\ba
& \Big(2\alpha-\f{n+2\alpha}{q},\,\|f\|_{L^{q}(Q(1))}\Big) ,\,~~~~1/2<\alpha<3/4,\\
&\Big(\gamma,\,\|f\|_{M_{2\alpha,\gamma}}\Big),~~~~~~
~~~~~~~~~~~~~~~3/4\leq\alpha<1.
\ea\right.
$$
\end{prop}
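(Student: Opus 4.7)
The plan is to derive \eqref{eq3.1} by applying the local energy inequality \eqref{loc} at each dyadic scale $r_l$ for $l=k_0,\dots,k$ and summing the resulting single-scale estimates (equivalently, by induction on $k-k_0$). For each $l$, I would choose an admissible test pair $(\varphi_1^l,\varphi_2^l)$ in the sense of Definition \ref{defi}, where $\varphi_1^l\in C_0^\infty$ is a space-time cutoff with $\varphi_1^l\equiv 1$ on $\tilde{Q}_{l+1}$, $\mathrm{supp}\,\varphi_1^l\subset\tilde{Q}_l$, $|\nabla\varphi_1^l|\leq Cr_l^{-1}$, $|\partial_t\varphi_1^l|\leq Cr_l^{-2\alpha}$, and $\varphi_2^l$ is a smooth extension to $\mathbb{R}_+^{n+1}$ satisfying $\varphi_2^l|_{y=0}=\varphi_1^l$, $|\mathrm{div}(y^{1-2\alpha}\nabla^{\ast}\varphi_2^l)|\leq Cr_l^{-2}y^{1-2\alpha}\mathbf{1}_{\tilde{Q}_l^{\ast}}$, and $|\lim_{y\to 0_+}y^{1-2\alpha}\partial_y\varphi_2^l|\leq Cr_l^{-2\alpha}\mathbf{1}_{\tilde{Q}_l}$, in the spirit of the test functions employed in \cite{[TY1]}.

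Next I would estimate each right-hand term in \eqref{loc} at scale $r_l$. The two $y$-weighted quadratic contributions $\iint(|u^{\ast}|^2+|h^{\ast}|^2)\mathrm{div}(y^{1-2\alpha}\nabla^{\ast}\varphi_2^l)$ and $\iint(|u|^2+|h|^2)\bigl[\partial_t\varphi_1^l+C\lim_{y\to 0_+}y^{1-2\alpha}\partial_y\varphi_2^l\bigr]$ are bounded, using the weighted interpolation inequalities \eqref{keyinq3} and \eqref{keyinq5}, by $Cr_l^{-n}$ times the level-$l$ energy $\sup_{-r_l^{2\alpha}\leq t-t_0\leq 0}\int_{\tilde{B}_l}(|u|^2+|h|^2)+\iint_{\tilde{Q}_l^{\ast}}y^{1-2\alpha}(|\nabla^{\ast}u^{\ast}|^2+|\nabla^{\ast}h^{\ast}|^2)$; this is exactly the left-hand side of \eqref{eq3.1} at level $l$ and is what drives the iteration. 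The transport/pressure integral $\iint u\cdot\nabla\varphi_1^l(|u|^2+|h|^2+2p)$ combined with the magnetic cross-term $-2\iint h\cdot\nabla\varphi_1^l(u\cdot h)$ is handled by invoking $\mathrm{div}\,u=0$ to replace $p$ by $p-\overline{p}_l$ and by Young's inequality $|u||h|^2\leq C(|u|^3+|h|^3)$ on the $(u,h,h)$-interaction, yielding the bound $Cr_l^{-1}\iint_{\tilde{Q}_l}(|u|^3+|h|^3+|u||p-\overline{p}_l|)$; after normalizing by $|\tilde{Q}_l|\sim r_l^{n+2\alpha}$ this matches $Cr_l^{2\alpha-1}\fqxol(|u|^3+|h|^3+|u||p-\overline{p}_l|)$. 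The force integral $\iint u\cdot f\,\varphi_1^l$ is estimated via Hölder's inequality: an $L^{q'}\times L^q$ pairing for $1/2<\alpha<3/4$, and an $L^{(2\alpha)'}\times L^{2\alpha}$ pairing combined with the Morrey scaling $\|f\|_{L^{2\alpha}(\tilde{Q}_l)}\leq Cr_l^{\gamma}\|f\|_{M_{2\alpha,\gamma}}$ for $3/4\leq\alpha<1$, in each case producing the factor $Cr_l^{\gamma'}(\fqxol|u|^3)^{1/3}\|f\|$ with $\gamma'$ as in the statement.

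Assembling these term-by-term bounds gives a single-step recursion $\Phi_{l+1}\leq C\Phi_l+(\text{cubic, pressure and force contributions at scale }l)$, where $\Phi_l$ denotes the left-hand side of \eqref{eq3.1} with $k$ replaced by $l$; iterating (or telescoping) from $l=k_0$ to $l=k$ then yields \eqref{eq3.1}, with the quadratic telescoping absorbing the intermediate-scale energies down to the base level $k_0$ and the cubic/force contributions aggregating into the two sums. The main obstacle is the construction of $\varphi_2^l$ so that the two $y$-weighted quadratic right-hand terms reproduce the level-$l$ left-hand energy with a constant independent of $l$; this is where the Caffarelli--Silvestre extension framework and the weighted inequalities \eqref{keyinq3}--\eqref{keyinq5} are decisive. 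A secondary but essential point is the substitution $p\mapsto p-\overline{p}_l$ justified by $\mathrm{div}\,u=0$, without which the pressure integral would fail to carry the correct local scaling needed for the dyadic sum.
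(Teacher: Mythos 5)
Your overall strategy---testing the local energy inequality with a plain cutoff at each dyadic scale and then telescoping---contains a genuine gap at precisely the step you describe as ``the quadratic telescoping absorbing the intermediate-scale energies down to the base level $k_0$.'' With ordinary cutoffs $\varphi_1^l$ (with $|\partial_t\varphi_1^l|\le Cr_l^{-2\alpha}$, $|\D(y^{1-2\alpha}\nabla^{\ast}\varphi_2^l)|\le Cr_l^{-2}y^{1-2\alpha}$), the two quadratic right-hand terms of \eqref{loc} at scale $l$ are bounded by $C$ times the level-$l$ energy $\Phi_l$ with a constant $C>1$ that cannot be made small: e.g.\ $r_l^{-2\alpha}\iint_{\tilde Q_l}(|u|^2+|h|^2)$ is genuinely comparable to $r_l^{n}\sup_t r_l^{-n}\int_{\tilde B_l}(|u|^2+|h|^2)$, with no gain from the scale separation. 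Iterating $\Phi_{l+1}\le C\Phi_l+\epsilon_l$ from $k_0$ to $k$ therefore yields a prefactor $C^{k-k_0}$ in front of the base-level energy, which blows up as $k\to\infty$, whereas Proposition \ref{keyinindu} asserts a constant independent of $k$ and $k_0$. There is no absorption mechanism available: the left-hand side at level $l+1$ is not large enough to swallow $C\Phi_l$.

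The paper's proof avoids this entirely by applying the local energy inequality \emph{once}, with $\varphi_1=\phi_1\Gamma$ and $\varphi_2=\phi_1\phi_2\Gamma^{\ast}$, where $\Gamma(x,r_k^{2\alpha}-t)$ is the fundamental solution of the backward fractional heat equation \eqref{fheq}, $\Gamma^{\ast}$ is its Caffarelli--Silvestre extension, and $\phi_1,\phi_2$ are cutoffs at the \emph{outer} scale $r_{k_0}$. The lower bounds $\Gamma,\Gamma^{\ast}\ge Cr_k^{-n}$ on $\tilde Q_k$, $\tilde Q_k^{\ast}$ produce the level-$k$ normalized energy on the left, while the identities $(\phi_1\Gamma)_t+C\lim_{y\to0^+}y^{1-2\alpha}\partial_y(\phi_1\phi_2\Gamma^{\ast})=\partial_t\phi_1\,\Gamma$ and $\D(y^{1-2\alpha}\nabla^{\ast}\Gamma^{\ast})=0$ guarantee that every quadratic term on the right is supported only where derivatives fall on $\phi_1$ or $\phi_2$, i.e.\ at scale $r_{k_0}$; hence only the $k_0$-level energy appears, with a universal constant. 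The dyadic sums over $l$ in \eqref{eq3.1} then come not from iterating, but from decomposing the single nonlinear, pressure and force integrals over the shells $Q_l\setminus Q_{l+1}$ using the pointwise decay $|\nabla\Gamma|\le Cr_{l+1}^{-(n+1)}$ there (and, for the pressure, the additional shell cutoffs $\chi_l-\chi_{l+1}$ so that a different mean $\bar p_l$ can be subtracted on each shell by incompressibility). Your term-by-term bounds for the cubic, pressure and force contributions at a fixed scale are in the right spirit, but without the backward heat kernel (or some equivalent device) the scheme cannot deliver the uniform constant that the statement requires.
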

\begin{proof}
Without loss of generality, we suppose $(x_{0},t_{0})=(0,0)$.
Let $\Gamma( x,r_{k}^{2\alpha}-t)$ be the fundamental solution of the backward fractional heat equation \be\label{fheq}\Gamma_{t}-(-\Delta)^{\alpha}\Gamma=\Gamma_{t}+ C_{\alpha}\lim\limits_{y\rightarrow0^{+}}y^{1-2\alpha}\partial_{y}\Gamma^{\ast }=0, \ee
 where $\Gamma^{\ast }=\Gamma^{\ast}(x,\,y ,\, r_{k}^{2\alpha}-t)$ is the extension of $\Gamma(x ,\,r_{k}^{2\alpha}-t)$ in the sense \eqref{ellipticprblom}. Then, by means of the Poisson   formula \eqref{PF}, we have
\be\label{pf}\Gamma^{\ast}=
\int_{\mathbb{R}^{n}}\f{y^{2\alpha}\Gamma(\xi,r_{k}^{2\alpha}-t)}{(|x-\xi|^{2}+y^{2})^{\f{n+2\alpha}{2}}}
d\xi.\ee
To proceed further, we list some properties of the test function $\Gamma(r_{k}^{2\alpha}-t,x)$ (whose deduction can be found in \cite{[CZ]}):
\begin{align}
&\int_{\mathbb{R}^{n}}\Gamma(x,r_{k}^{2\alpha}-t)dx=1,\label{property1}\\
&\f{C^{-1}(r_{k}^{2\alpha}-t)}{((r_{k}^{2\alpha}-t)^{\f{1}{2\alpha}}+|x|)^{n+2\alpha}} \leq\Gamma(x,r_{k}^{2\alpha}-t)
\leq \f{C(r_{k}^{2\alpha}-t)}{((r_{k}^{2\alpha}-t)^{\f{1}{2\alpha}}+|x|)^{n+2\alpha}},
\label{property2} \\
 &   |\nabla\Gamma(x,r_{k}^{2\alpha}-t)|
 \leq\f{C(r_{k}^{2\alpha}-t)^{1-\f{1}{2\alpha}}}
 {((r_{k}^{2\alpha}-t)^{\f{1}{2\alpha}}+|x|)
 ^{n+2\alpha}},\label{property3}\\
&|\Gamma(x_{1},r_{k}^{2\alpha}-t)-\Gamma(x_{2},r_{k}^{2\alpha}-t)|\leq  \f{C|x_{1}-x_{2}|^{\delta}
(r_{k}^{2\alpha}-t)^{1-\f{\delta}{2\alpha}}}
{\big((r_{k}^{2\alpha}-t)^{\f{1}{2\alpha}}+ ( |x_{1}|\wedge |x_{2}|)\big)^{n+2\alpha} },~~\delta\in(0,2\alpha \wedge 1).\label{property4}
\end{align}
Consider the smooth cut-off functions below
$$\phi_{1}(x,t)=\left\{\ba
&1,\,~~(x,t)\in Q(r_{k_{0}+1}),\\
&0,\,~~(x,t)\in Q^{c}(\f{3}{2}r_{k_{0}+1});
\ea\right.
$$
and
$$\phi_{2}(y)=\left\{\ba
&1,\,~~ 0\leq y\leq r_{k_{0}+1} ,\\
&0,\,~~ y>\f{3}{2} r_{k_{0}+1};
\ea\right.
$$
 satisfying
$$
0\leq \phi_{1},\,\phi_{2}\leq1,~~~~~~r^{2\alpha}_{k_{0}}|\partial_{t}\phi_{1} (x,t)|
+r^{l}_{k_{0}}|\partial^{l}_{x}\phi_{1}(x,t)|\leq C ~~\text{and}~~~~~r^{l}_{k_{0}}|\partial^{l}_{y}\phi_{2}(y)|\leq C.
$$
Notice that $\lim\limits_{y\rightarrow0}\Gamma^{\ast}\phi_{2}(y)=\Gamma$, therefore, setting $\varphi_{1}=\phi_{1}\Gamma$ and $\varphi_{2}=\phi_{1}\phi_{2}\Gamma^{\ast}$ in the  local energy inequality \eqref{loc}, we see that
\begin{align}
&\int_{\mathbb{R}^{n}}(|u|^{2}+|h|^{2})\phi_{1}(x,t)\Gamma
+2C\int^{t}_{-r^{2\alpha}_{k_{0}}}
\int_{\mathbb{R}_{+}^{n+1}}\phi_{1} \phi_{2}\Gamma^{\ast} y^{1-2\alpha}
(|\nabla^{\ast}u^{\ast}|^{2}+|\nabla^{\ast}h^{\ast}|^{2})\nonumber\\
\leq& C \int^{t}_{-r^{2\alpha}_{k_{0}}}\int_{\mathbb{R}_{+}^{n+1}}
(|u^{\ast}|^{2}+|h^{\ast}|^{2})\D(y^{1-2\alpha}\nabla^{\ast}(\phi_{1} \phi_{2}\Gamma^{\ast}))\nonumber\\&
+\int^{t}_{-r^{2\alpha}_{k_{0}}}\int_{\mathbb{R}^{n}}
(|u|^{2}+|h|^{2})
\Big[(\phi_{1}  \Gamma )_{t}+ C\lim\limits_{y\rightarrow0^{+}}y^{1-2\alpha}\partial_{y}(\phi_{1} \phi_{2}\Gamma^{\ast})\Big]\nonumber\\&
+ \int^{t}_{-r^{2\alpha}_{k_{0}}}
\int_{\mathbb{R}^{n}}u\cdot\nabla(\phi_{1} \Gamma)(|u|^{2}+|h|^{2}+2p)-2\int^{t}_{-r^{2\alpha}_{k_{0}}}
\int_{\mathbb{R}^{n}}h\cdot\nabla(\phi_{1} \Gamma) (u\cdot h)\nonumber\\&+2\int^{t}_{-r^{2\alpha}_{k_{0}}}
\int_{\mathbb{R}^{n}}u\cdot f(\phi_{1} \Gamma).
\label{loc21}\end{align}
First, we present the low bound estimates of the terms on the left hand side of this inequality.
Indeed, with the help of \eqref{property2}, we find
\be\label{1}\Gamma( x,r_{k}^{2\alpha}-t)\geq
\f{C^{-1}(r_{k}^{2\alpha}-t)}{((r_{k}^{2\alpha}-t)
^{\f{1}{2\alpha}}+|x|)^{n+2\alpha}}\geq Cr_{k}^{-n},-r_{k}^{2\alpha}\leq t\leq 0, x\in \tilde{B}_{k},\ee
which means that
$$\int_{B_{k}}(|u|^{2}+|h|^{2})\phi_{1} \Gamma\geq C{\fbx}(|u|^{2}+|h|^{2}).$$
For each $y\in[0,r_{k}],~x\in \tilde{B}_{k}$, the triangle inequality allows us to get $|\xi|\leq2r_{k}$ under the
hypothesis  $|x-\xi|<|y|$, then arguing in the same manner as \eqref{1}, we know that  $\Gamma(\xi,r_{k}^{2\alpha}-t)\geq Cr_{k}^{-n} $ if $|\xi|\leq2r_{k}$.
According to the Poisson formula
\eqref{pf}, we see that
$$\Gamma^{\ast}\geq
\int_{|x-\xi|<|y|}\f{y^{2\alpha}\Gamma
(\xi,r_{k}^{2\alpha}-t)}{(|x-\xi|^{2}+y^{2})^{\f{n+2\alpha}{2}}}
d\xi\geq Cr_{k}^{-n},
$$
which in turns implies
$$
\int^{t}_{-r^{2\alpha}_{k_{0}}}\int_{\mathbb{R}_{+}^{n+1}}\phi_{1} \phi_{2}\Gamma^{\ast} y^{1-2\alpha}
\big(|\nabla^{\ast}u^{\ast}|^{2}+|\nabla^{\ast}h^{\ast}|^{2}\big)\geq r_{k}^{-n}\iint_{Q^{\ast}_{k}}y^{1-2\alpha}
\big(|\nabla^{\ast}u^{\ast}|^{2}+|\nabla^{\ast}h^{\ast}|^{2}\big).$$
Secondly, we turn our attentions to the
right hand side of  \eqref{loc21}.
Since  $\Gamma^{\ast}$ is the extension of  $\Gamma$ in the sense \eqref{ellipticprblom}, namely,
 $\D(y^{1-2\alpha}\nabla^{\ast}\Gamma^{\ast})=0$, we write
$$\ba
&\D(y^{1-2\alpha}\nabla^{\ast}(\phi_{1} \phi_{2}\Gamma^{\ast}))\\
=&(1-2\alpha)y^{-2\alpha}\Gamma^{\ast}
\phi_{1}\partial_{y}\phi_{2}+
y^{1-2\alpha}\Gamma^{\ast} \D(\nabla^{\ast} (\phi_{1} \phi_{2}))+2 y^{1-2\alpha}\nabla^{\ast}\Gamma^{\ast}\nabla^{\ast} (\phi_{1} \phi_{2})\\
=&(1-2\alpha)y^{ -2\alpha }\Gamma^{\ast}\phi_{1}\partial_{y}\phi_{2}\\
&+y^{1-2\alpha}\Gamma^{\ast}\phi_{1}
\partial_{yy}\phi_{2}+
y^{1-2\alpha}\Gamma^{\ast}\phi_{2}\Delta\phi_{1}
 \\&+ 2 y^{1-2\alpha}\phi_{1}\partial_{y}\Gamma^{\ast}\partial_{y}\phi_{2}+2 y^{1-2\alpha}\phi_{2} \nabla\Gamma^{\ast} \nabla\phi_{1}\\
 =:&I_{1}+I_{21}+I_{22}+I_{31}+I_{32}
 .\ea$$
Thanks to the support property of $\partial_{y}\phi_{2}$, the estimates of $I_{1},I_{21},I_{31}$ are easy.
Using the Poisson formula   \eqref{pf} and \eqref{property1}, we arrive at
$$\partial_{y}\phi_{2}\Gamma^{\ast}\leq Cy^{-n-1}\int_{\mathbb{R}^{n}}\Gamma(\xi,r_{k}^{2\alpha}-t)d\xi\leq Cy^{-n-1}.$$
By the latter inequality and \eqref{keyinq5}, we know that
$$\ba
\int^{t}_{-r^{2\alpha}_{k_{0}}}\int_{\mathbb{R}_{+}^{n+1}}
|u^{\ast}|^{2}(I_{1}+I_{21})&\leq C r_{k_{0}}^{-(n+2)}\iint_{Q^{\ast}_{k_{0}}}y^{1-2\alpha}
\big|u^{\ast}|^{2}\\
&\leq C\sup_{-r_{k_{0}}^{2\alpha}\leq t\leq0}\fbxozeroo|u|^{2}+ Cr_{k_{0}}^{-n}\iint_{Q^{\ast}_{k_{0}}}y^{1-2\alpha}
|\nabla^{\ast}u^{\ast}|^{2}.
\ea$$
Likewise,
$$\ba
\int^{t}_{-r^{2\alpha}_{k_{0}}}\int_{\mathbb{R}_{+}^{n+1}}
|h^{\ast}|^{2}(I_{1}+I_{21})
&\leq C\sup_{-r_{k_{0}}^{2\alpha}\leq t\leq0}\fbxozeroo|h|^{2}+ Cr_{k_{0}}^{-n}\iint_{Q^{\ast}_{k_{0}}}y^{1-2\alpha}
|\nabla^{\ast}h^{\ast}|^{2}.
\ea$$
It follows from the Poisson formula \eqref{pf} that
\be\label{pf1}\partial_{y}\Gamma^{\ast}(x,y,r_{k}^{2\alpha}-t)=
\int_{\mathbb{R}^{n}}\f{2\alpha|x-\xi|^{2}y^{2\alpha-1}-ny^{2\alpha+1}}
{(|x-\xi|^{2}+y^{2})^{\f{n+2\alpha}{2}+1}}
 \Gamma(\xi,r_{k}^{2\alpha}-t)
d\xi\leq C y^{-n-1},\ee
which implies
$$\ba
\int^{t}_{-r^{2\alpha}_{k_{0}}}\int_{\mathbb{R}_{+}^{n+1}}
(|u^{\ast}|^{2}&+|h^{\ast}|^{2}) I_{31}\\
\leq &C\sup_{-r_{k_{0}}^{2\alpha}\leq t\leq0}\fbxozeroo(|u|^{2}+|h|^{2})+ r_{k_{0}}^{-n}\iint_{Q^{\ast}_{k_{0}}}y^{1-2\alpha}
(|\nabla^{\ast}u^{\ast}|^{2}+|\nabla^{\ast}h^{\ast}|^{2}),
\ea$$
where we have used \eqref{keyinq5} again.

It is clear that
$$
\Delta\phi_{1}\int_{|\xi|\leq2^{-2}r_{k_{0}}}\f{y^{2\alpha}
\Gamma(\xi,r_{k}^{2\alpha}-t)}
{(|x-\xi|^{2}+y^{2})^{\f{n+2\alpha}{2}}}
d\xi\leq|\Delta\phi_{1}|\int_{\mathbb{R}^{n}}
\f{\Gamma(\xi,r_{k}^{2\alpha}-t)1_{|x-\xi|\geq\f{r_{k_{0}}}{4}}}{(|x-\xi|^{2}+y^{2})
^{\f{n}{2}}}
d\xi\leq C r_{k_{0}}^{-n-2}.
$$
Thanks to \eqref{property2}, we have
$$
\int_{|\xi|>2^{-2}r_{k_{0}}}\f{y^{2\alpha}
\Gamma(\xi,r_{k}^{2\alpha}-t)}
{(|x-\xi|^{2}+y^{2})^{\f{n+2\alpha}{2}}}
d\xi\leq Cr_{k_{0}}^{-n} \int_{\mathbb{R}^{n}}\f{y^{2\alpha}
 }{(|x-\xi|^{2}+y^{2})^{\f{n+2\alpha}{2}}}
d\xi\leq Cr_{k_{0}}^{-n}.$$
Consequently, by \eqref{keyinq5} again, we find
\begin{equation*}
  \begin{split}
    &\int^{t}_{-r^{2\alpha}_{k_{0}}} \int_{\mathbb{R}_{+}^{n+1}}
\big(|u^{\ast}|^{2}+|h^{\ast}|^{2}\big) I_{22} \\
\leq &C\sup_{-r_{k_{0}}^{2\alpha}\leq t\leq0}\fbxozeroo(|u|^{2}+|h|^{2})+C r_{k_{0}}^{-n}\iint_{Q^{\ast}_{k_{0}}}y^{1-2\alpha}
(|\nabla^{\ast}u^{\ast}|^{2}+|\nabla^{\ast}h^{\ast}|^{2}).
  \end{split}
\end{equation*}
Moreover, some   straightforward computations   give
$$\ba\nabla \Gamma^{\ast}(x,y,r_{k}^{2\alpha}-t)
&=-(n+2\alpha)
\int_{\mathbb{R}^{n}}\f{y^{2\alpha}(x-\xi)
\Gamma(\xi,r_{k}^{2\alpha}-t)}
{(|x-\xi|^{2}+y^{2})^{\f{n+2\alpha}{2}+1}}
d\xi\\
&=-(n+2\alpha)
\int_{\mathbb{R}^{n}}\f{y^{2\alpha}(x-\xi)
\big[\Gamma(\xi,r_{k}^{2\alpha}-t)
-\Gamma(x,r_{k}^{2\alpha}-t)\big]}
{(|x-\xi|^{2}+y^{2})^{\f{n+2\alpha}{2}+1}}.
\ea$$
 The latter relation  leads to
$$I_{32}\leq C\phi_{2}y\int_{\mathbb{R}^{n}}\f{
|\nabla \phi_{1}|\big|\Gamma(\xi,r_{k}^{2\alpha}-t)
-\Gamma(x,r_{k}^{2\alpha}-t)\big|}
{(|x-\xi|^{2}+y^{2})^{\f{n+1+2\alpha}{2}}}
d\xi.$$
By the H\"older inequality, for any $y\leq r_{k_{0}}$, we easily verify that
$$\ba
|u^{\ast}(x,\,y)|^{2}&=\B|u(x)+\int^{y}_{0}\partial_{z}u^{\ast}dz\B|^{2}\\
&\leq C|u(x)|^{2}+Cy^{2\alpha}\int^{r_{k_{0}}}_{0}
z^{1-2\alpha}|\nabla^{\ast}u^{\ast}|^{2}dz,
\ea$$
which in turns implies,
abusing notation slightly,
\begin{align}
&\int^{t}_{-r^{2\alpha}_{k_{0}}}\int_{\mathbb{R}_{+}^{n+1}}
|u^{\ast}|^{2} I_{32}\nonumber\\
 \leq& C\iint_{Q^{\ast}_{k_{0}}}I_{32}
\Big(|u(x)|^{2}+ y^{2\alpha}\int^{r_{k_{0}}}_{0}z^{1-2\alpha}
|\nabla^{\ast}u^{\ast}|^{2}dz\Big)
\nonumber\\
\leq& C\iint_{Q^{\ast}_{k_{0}}}
 \Big(|u(x)|^{2}+ y^{2\alpha}\int^{r_{k_{0}}}_{0}z^{1-2\alpha}|\nabla^{\ast}
 u^{\ast}|^{2}dz \Big)
\int_{\mathbb{R}^{n}}\f{y
|\nabla \phi_{1}|\big|\Gamma(\xi,r_{k}^{2\alpha}-t)-
\Gamma(x,r_{k}^{2\alpha}-t)\big|}
{(|x-\xi|^{2}+y^{2})^{\f{n+1+2\alpha}{2}}}
d\xi\nonumber\\
\leq& C\iint_{Q_{k_{0}}}
 |u(x)|^{2}\int_{\mathbb{R}^{n}}\f{
|\nabla \phi_{1}|\big|\Gamma(\xi,r_{k}^{2\alpha}-t)
-\Gamma(x,r_{k}^{2\alpha}-t)\big|}
 {|x-\xi|^{n-1+2\alpha}}
d\xi \nonumber\\&+C r^{2\alpha}_{k_{0}} \iint_{Q^{\ast}_{k_{0}}} y^{1-2\alpha}|\nabla^{\ast}u^{\ast}|^{2}
\int_{\mathbb{R}^{n}}\f{
|\nabla \phi_{1}|\big|\Gamma(\xi,r_{k}^{2\alpha}-t)
-\Gamma(x,r_{k}^{2\alpha}-t)\big|}
 {|x-\xi|^{n-1+2\alpha}}
d\xi,\label{2}
\end{align}
where we have used the fact
$$ \int_{0}^{r_{k_{0}}}\f{dy^{2}}
{(|x-\xi|^{2}+y^{2})^{\f{n+1+2\alpha}{2}}}\leq \f{C }{|x-\xi|^{n-1+2\alpha}}.$$
For any $x$ belonging to the support of  $\nabla\phi_{1}$, it is valid that $|x-\xi|\geq 1/4 r_{k_{0}}$ for either
 $|\xi|\leq1/4 r_{k_{0}}$ or $|\xi|\geq2 r_{k_{0}}$.
Then, we deduce that
\begin{align}
&\bigg(\int_{|\xi|\leq1/4 r_{k_{0}}} +\int_{|\xi|\geq2 r_{k_{0}}}\bigg)\f{
 \big|\Gamma(\xi,r_{k}^{2\alpha}-t)-\Gamma(x,r_{k}^{2\alpha}-t)\big|}
 {|x-\xi|^{n-1+2\alpha}}
d\xi \nonumber\\
\leq&C r_{k_{0}}^{-(n-1+2\alpha)}\int_{|\xi|\leq1/4 r_{k_{0}}}\Gamma(\xi,r_{k}^{2\alpha}-t)d\xi+ C r_{k_{0}}^{-(n-1+2\alpha)} \nonumber\\& +Cr_{k_{0}}^{-(n-1+2\alpha)}\int_{|\xi|\geq  2r_{k_{0}}}\Gamma(\xi,r_{k}^{2\alpha}-t)d\xi
+Cr_{k_{0}}^{-n}\int_{|x-\xi|\geq  1/4 r_{k_{0}}}
\f{1}
 {|x-\xi|^{n-1+2\alpha}}d\xi\nonumber\\
 \leq&C r_{k_{0}}^{-(n-1+2\alpha)},\nonumber
\end{align}
where we have used \eqref{property1} and \eqref{property2} with $\f{3r_{k_{0}}}{4} \geq|x|\geq \f{r_{k_{0}}}{2}$.
In light of \eqref{property4}, for $\delta\in (2\alpha-1,1)$, we see that
\begin{align}
&\int_{1/4 r_{k_{0}}\leq|\xi|\leq2 r_{k_{0}}} \f{
 \big|\Gamma(\xi,r_{k}^{2\alpha}-t)-\Gamma(x,r_{k}^{2\alpha}-t)\big|}
 {|x-\xi|^{n-1+2\alpha}}
d\xi \nonumber\\
\leq&C\int_{1/4 r_{k_{0}}\leq|\xi|\leq2 r_{k_{0}}}
 \f{(r_{k}^{2\alpha}-t)^{1-\f{\delta}{2\alpha}}\Big[(r_{k}^{2\alpha}-t)^{\f{1}{2\alpha}}
 +(|x|\wedge|\xi|)\Big]^{-n-2\alpha}}
 {|x-\xi|^{n-1+2\alpha-\delta}}d\xi\nonumber\\
 \leq&Cr_{k_{0}}^{-n-\delta}\int_{|x-\xi|<4r_{k_{0}}}
 \f{1} {|x-\xi|^{n-1+2\alpha-\delta}}d\xi\nonumber\\
 \leq&C r_{k_{0}}^{-(n-1+2\alpha)}.\nonumber
\end{align}
Substituting the above estimates into
\eqref{2}, we arrive at
$$
\int^{t}_{-r^{2\alpha}_{k_{0}}}\int_{\mathbb{R}_{+}^{n+1}}
|u^{\ast}|^{2} I_{32}
\leq C\sup_{-r_{k_{0}}^{2\alpha}\leq t\leq0}\fbxozeroo|u|^{2}+ r_{k_{0}}^{-n}\iint_{Q^{\ast}_{k_{0}}}y^{1-2\alpha}
|\nabla^{\ast}u^{\ast}|^{2}.
$$
 Likewise, we have
 $$\ba
\int^{t}_{-r^{2\alpha}_{k_{0}}}\int_{\mathbb{R}_{+}^{n+1}}
|h^{\ast}|^{2} I_{32}
\leq& C\sup_{-r_{k_{0}}^{2\alpha}\leq t\leq0}\fbxozeroo|h|^{2}+ r_{k_{0}}^{-n}\iint_{Q^{\ast}_{k_{0}}}y^{1-2\alpha}
|\nabla^{\ast}h^{\ast}|^{2}.
 \ea$$
We deduce from \eqref{fheq} that
$$
(\phi_{1}  \Gamma )_{t}+ \lim\limits_{y\rightarrow0^{+}}y^{1-2\alpha}\partial_{y}(\phi_{1} \phi_{2}\Gamma^{\ast})=\partial_{t}\phi_{1}\Gamma.$$
As the support of  $\partial_{t}\phi_{1}$
is included in
$\tilde{Q}(\f{3r_{k_{0}}}{4})/\tilde{Q}(\f{r_{k_{0}}}{2}),$
from \eqref{property2}, we get
$$\int^{t}_{-r^{2\alpha}_{k_{0}}}\int_{\mathbb{R}^{n}}
(|u|^{2}+|h|^{2})
\Big[(\phi_{1}  \Gamma )_{t}+ \lim\limits_{y\rightarrow0^{+}}y^{1-2\alpha}\partial_{y}(\phi_{1} \phi_{2}\Gamma^{\ast})\Big]
\leq C\sup_{-r_{k_{0}}^{2\alpha}\leq t\leq0}\fbxozeroo(|u|^{2}+|h|^{2}).
$$
It follows from \eqref{property2} and \eqref{property3} that
$$\ba&\Gamma\leq  C r_{l+1}^{-n},~\nabla\Gamma \leq C  r_{l+1}^{-(n+1)} ~~~~\text{on}~~~~~Q_{l}/Q_{l+1}, \\
&\Gamma\leq  C r_{k}^{-n},~\nabla\Gamma \leq C  r_{k}^{-(n+1)} ~~~~\text{on}~~~~~Q_{k},\ea   $$
which yields
\begin{align}
&\iint_{Q_{k_{0}}}u\cdot\nabla(\phi_{1} \Gamma) (|u|^{2}+|h|^{2})\nonumber\\
\leq&\sum^{k-1}_{l=k_{0}}\iint_{Q_{l}/Q_{l+1}}
\big(|u|^{3}+|h|^{3}\big)|\nabla(\phi_{1} \Gamma)|  +\iint_{Q_{k}}(|u|^{3}+|h|^{3})|\nabla(\phi_{1} \Gamma)|   \nonumber\\
\leq& C\sum^{k}_{l=k_{0}}r_{l}^{2\alpha-1}\fqxolo(|u|^{3}+|h|^{3})\nonumber.
\end{align}
Exactly as in the above derivation, we derive from the Young inequality that
\begin{align} \int^{t}_{-r^{2\alpha}_{k_{0}}}
\int_{\mathbb{R}^{n}}h\cdot\nabla(\phi_{1} \Gamma) (u\cdot h)
&\leq  C\sum^{k}_{l=k_{0}}r_{l}^{2\alpha-1}\fqxolo(|u|^{3}+|h|^{3}).\nonumber \end{align}
For any $1/2<\alpha<3/4$, using the H\"older inequality twice, we get
\begin{align}
 \int^{t}_{-r^{2\alpha}_{k_{0}}}
\int_{\mathbb{R}^{n}}f\phi_{1} \Gamma  u
&\leq  C\sum^{k}_{l=k_{0}}r_{l}^{-n}\B(\iint_{Q_{l} } |u|^{3}\B)^{1/3} \B(\iint_{Q_{l} } |f|^{3/2}\B)^{2/3}\nonumber \\
 &\leq  C\sum^{k}_{l=k_{0}}r_{l}^{2\alpha-\f{n+2\alpha}{q}}\B(\fqxolo |u|^{3}\B)^{1/3}\B(\iint_{Q(1) } |f|^{q}\B)^{1/q}. \nonumber
\end{align}
Similarly, for any $3/4\leq\alpha<1$, we arrive at
$$\ba
 \int^{t}_{-r^{2\alpha}_{k_{0}}}
\int_{\mathbb{R}^{n}}f\phi_{1} \Gamma  u
 &\leq  C\sum^{k}_{l=k_{0}}r_{l}^{\gamma}
 \B(\fqxolo |u|^{3}\B)^{1/3}\|f\|_{M_{2\alpha,\gamma}}
 .\ea
 $$
Now, let us turn to the term involving pressure in \eqref{loc21}. Before going further, we
set $\chi_{l}=1$ if $|x|\leq7/8 r_{l}$ and $\chi_{l}=0 $ if $ |x|\geq r_{l}$.
Thanks to the support of $(\chi_{l}-\chi_{l+1})$, we derive $|\nabla((\chi_{l}-\chi_{l+1})\phi_{1} \Gamma)|\leq Cr^{-(n+1)}_{l+1}$ from \eqref{property2} and \eqref{property3}.
With the help  of  \eqref{property2} and \eqref{property3} again, we see that $|\nabla( \chi_{k} \phi_{1} \Gamma)|\leq Cr^{-(n+1)}_{k} $.
Thus, thanks to the divergence free condition, we have
$$\ba
\iint_{Q_{k_{0}}}u\cdot\nabla(\phi_{1} \Gamma) p
=&
\sum^{k-1}_{l=k_{0}}\iint_{Q_{l}}
u\cdot\nabla((\chi_{l}-\chi_{l+1})\phi_{1} \Gamma) (p-\bar{p}_{l})+\iint_{Q_{k }}u\cdot\nabla( \chi_{k} \phi_{1} \Gamma) (p-\bar{p}_{k})
\\
\leq& C
\sum^{k }_{l=k_{0}}r_{l}^{2\alpha-1}\fqxolo
|u||p-\bar{p}_{l}|.
\ea$$
Finally, collecting the estimates we have obtained leads  to \eqref{eq3.1}.
\end{proof}
A slight variant of the above proof provides the following
corollary, which allows us to complete the proof of Theorems
\ref{the1.3} and \ref{the1.4}.
\begin{coro}\label{coro}
For any $\mu\leq1/8\rho$, there exists a constant $C$ such that the following result is valid, under the condition $f=0$,
\begin{align} \nonumber
E(\mu)+E_{\ast}(\mu)\leq& C \B(\f{\mu}{\rho}\B)^{4\alpha-2}
(E(\rho/4)+E_{\ast}(\rho/4))\\&+C \Big(\f{\rho}{\mu}\Big)^{n+3-4\alpha}\rho^{-(n+3-4\alpha)}
\iint_{Q(\rho/4)} |u|^{3}+|u||h|^{2}+|u||p-p_{\rho/4}|.
\label{coro1}\end{align}
\end{coro}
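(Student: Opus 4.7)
The plan is to adapt the dyadic argument of Proposition \ref{keyinindu} to a two-scale version with only the pair $(\mu,\,\rho/4)$ in place of the telescoping radii $r_{k}, r_{k+1}, \ldots, r_{k_{0}}$. Specifically, I would again take $\varphi_{1} = \phi_{1} \Gamma$ and $\varphi_{2} = \phi_{1} \phi_{2} \Gamma^{\ast}$ in the local energy inequality \eqref{loc}, where $\Gamma(x,\,\mu^{2\alpha} - t)$ is the backward fractional heat kernel with pole scale $\mu$ as in \eqref{fheq}--\eqref{pf}, $\phi_{1} \in C_{0}^{\infty}(Q(\rho/4))$ satisfies $\phi_{1} \equiv 1$ on $Q(\rho/8)$ together with $\rho^{2\alpha}|\partial_{t}\phi_{1}| + \rho^{j}|\partial_{x}^{j}\phi_{1}| \leq C$, and $\phi_{2}(y)$ is an analogous cut-off at scale $\rho$ in the extension variable.

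The lower bounds on $\Gamma$ and $\Gamma^{\ast}$ near the pole, used identically to Proposition \ref{keyinindu}, yield the LHS lower bound $C^{-1}\mu^{2-4\alpha}(E(\mu) + E_{\ast}(\mu))$. On the right-hand side, the terms $I_{1}, I_{21}, I_{22}, I_{31}, I_{32}$ arising from derivatives of the cut-offs are all supported in a region of spatial scale $\rho$, where by \eqref{property2}--\eqref{property4} and the Poisson formula one has $\Gamma \leq C\rho^{-n}$, $\Gamma^{\ast} \leq C\rho^{-n}$, and $|\nabla^{\ast}\Gamma^{\ast}| \leq C\rho^{-(n+1)}$. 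Rerunning the five corresponding estimates of Proposition \ref{keyinindu} with $r_{k_{0}}$ replaced by $\rho/4$ and invoking \eqref{keyinq5}, the linear contributions combine to a bound of the form $C\rho^{2-4\alpha}(E(\rho/4) + E_{\ast}(\rho/4))$; dividing by $\mu^{2-4\alpha}$ then produces exactly the decay factor $(\mu/\rho)^{4\alpha-2}$ appearing in \eqref{coro1}.

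For the nonlinear pieces I would abandon the dyadic splitting entirely and simply use the uniform bound $|\nabla(\phi_{1}\Gamma)| \leq C\mu^{-(n+1)}$ on the whole support of $\phi_{1}$, which follows from \eqref{property3} since $|\nabla \Gamma|$ peaks at the pole. The kinetic transport term $u \cdot \nabla(\phi_{1}\Gamma)(|u|^{2} + |h|^{2})$ together with the magnetic cross term $-2(h \cdot \nabla(\phi_{1}\Gamma))(u \cdot h)$ are controlled directly by Cauchy--Schwarz, without any Young-type splitting, yielding the combined bound $C\mu^{-(n+1)}\iint_{Q(\rho/4)}(|u|^{3} + |u||h|^{2})$; this is precisely where $|u||h|^{2}$ (rather than the $|u|^{3}+|h|^{3}$ used in Proposition \ref{keyinindu}) appears. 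For the pressure term I would use $\Div u = 0$ to replace $p$ by $p - \overline{p}_{\rho/4}$ inside the integral before estimating, producing $C\mu^{-(n+1)}\iint |u||p - \overline{p}_{\rho/4}|$. Dividing the full inequality through by $\mu^{2-4\alpha}$ then converts the prefactor $C\mu^{-(n+1)}$ into $C\mu^{-(n+3-4\alpha)} = C(\rho/\mu)^{n+3-4\alpha}\rho^{-(n+3-4\alpha)}$, matching the second term on the right-hand side of \eqref{coro1}.

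The main technical obstacle, inherited from Proposition \ref{keyinindu}, will be controlling the $I_{32}$ contribution coming from $\nabla\phi_{1} \cdot \nabla^{\ast}\Gamma^{\ast}$: the H\"older-type difference estimate \eqref{property4} for $\Gamma$ must be re-run with $r_{k_{0}}$ replaced by $\rho/4$, splitting the $\xi$-integral over $|\xi| \leq \rho/16$, $|\xi| \geq 2\rho$, and the intermediate shell, and then choosing $\delta \in (2\alpha - 1,\,1)$ to close the borderline power count. The remaining bookkeeping is otherwise identical to the proof above.
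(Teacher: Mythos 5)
Your proposal is correct and is essentially the ``slight variant'' of Proposition \ref{keyinindu} that the paper invokes without writing out: the same test functions $\phi_{1}\Gamma$, $\phi_{1}\phi_{2}\Gamma^{\ast}$ with the kernel poled at scale $\mu$ and the cut-offs at scale $\rho/4$, the same lower bounds near the pole, the same treatment of $I_{1},\dots,I_{32}$ via \eqref{keyinq5} and \eqref{property4}, and the uniform bound $|\nabla(\phi_{1}\Gamma)|\leq C\mu^{-(n+1)}$ replacing the dyadic sum so that the cross term is kept as $|u||h|^{2}$ rather than Young-split into $|u|^{3}+|h|^{3}$. All the scaling factors check out ($\mu^{-(n+1)}/\mu^{2-4\alpha}=(\rho/\mu)^{n+3-4\alpha}\rho^{-(n+3-4\alpha)}$), so no further comment is needed.
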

Now, we are in a position to prove Theorem \ref{the1.1}.
\begin{proof}[Proof of Theorem \ref{the1.1}]
In what follows, let   $(x_{0},t_{0})\in Q(1/8)$ and $r_{k}=2^{-k}$.
According to the Lebesgue differentiation theorem, it  suffices to show
\be\label{goal}
\fqxoo |u |^{3}+ |h|^{3}+\min\B\{1,\,r_{k}^{\f{4\alpha-3}{3}}\B\}\fqxoo |u||p-\overline{p}_{k}|
\leq \varepsilon_{1}^{2/3}, ~~k\geq3.\ee
First, we show that the latter inequality is valid for $k=3$.  In fact, by means of interpolation inequality, the Young inequality and \eqref{keyinq8}, we infer that
\be\label{eq3.5}\ba
\B(\iint_{Q(2/3)}|u|^{3}\B)^{1/3}&\leq C
\B(\iint_{Q(2/3)}|u|^{\f{2n+4\alpha}{n}}\B)^{\f{n}{2n+4\alpha}}
\\&\leq C\B(\sup_{-(\f{2}{3})^{2\alpha}\leq t<0}\int_{B(1)}|u|^{2}\B)^{\f{n}{2(n+2\alpha)}}
\B(\int^{0}_{-(\f{2}{3})^{2\alpha}}\B(\int_{B(2/3)}|u|^{\f{2n}{n-2\alpha}}\B)
^{\f{n-2\alpha}{n}}\B)^{\f{\alpha}{n+2\alpha}}\\&\leq
C\B(\sup_{-1\leq t<0}\int_{B(2/3)}|u|^{2}\B)^{1/2}+C
\B(\int^{0}_{-(\f{2}{3})^{2\alpha}}\B(\int_{B(2/3)}|u|^{\f{2n}{n-2\alpha}}\B)
^{\f{n-2\alpha}{n}}\B)^{\f{1}{2}}
\\&\leq C\B(\sup_{-1\leq t<0}\int_{B(1)}|u|^{2}\B)^{1/2}+ C\B(\iint_{Q^{\ast}(1)}y^{1-2\alpha}|\nabla^{\ast} u^{\ast}|^{2}\Big)^{1/2}.
\ea\ee
It turns out that
$$
\fqxoth |u |^{3}+|h |^{3}\leq C\varepsilon_{1}^{3/2}.$$
 Applying Lemma \ref{inp} with $\mu=1/8$ and $\rho=1/2$, we observe  that
$$
\ba
&\iint_{\tilde{Q}_{3}}|u||p-\overline{p}_{3}|\\
\leq
&C \B(\iint_{\tilde{Q}_{3}}|u|^{3}\B)^{1/3}
\B(\iint_{\tilde{Q}_{2}}|u|^{3}+|h|^{3}\B)^{2/3}
\\&+C\B(\iint_{\tilde{Q}_{3}}|u|^{3}\B)^{1/3}
\sup_{-(\f{1}{8})^{2\alpha}\leq t-t_{0}<0}\int_{1/4<|y-x_{0}|<1/2}
\f{|u|^{2}+|h|^{2}}{|y-x_{0}|^{n+1}}dy\\&
+C\B(\iint_{\tilde{Q}_{3}}|u|^{3}\B)^{1/3}
\B(\iint_{\tilde{Q}(1/2)}|u|^{3}+|h|^{3}\B)^{2/3}
+C\B(\iint_{\tilde{Q}_{3}}|u|^{3}\B)^{1/3}
\B(\iint_{\tilde{Q}(1/2)}|p|^{3/2}\B)^{2/3}\\
\leq&C\B(\iint_{ Q(2/3)}|u|^{3}\B)^{1/3}
\B(\iint_{Q(2/3)}|u|^{3}+|h|^{3}\B)^{2/3}\\&
+C\B(\iint_{Q(2/3)}|u|^{3}\B)^{1/3}
\sup_{-(2/3)^{2\alpha}\leq t-t_{0}<0}\int_{ |y |<1} \Big(|u|^{2}+|h|^{2}\Big)dy \\&+C
\B(\iint_{Q(2/3)}|u|^{3}\B)^{1/3}
\B(\iint_{Q(2/3)}|u|^{3}+|h|^{3}\B)^{2/3}
+C\B(\iint_{Q(2/3)}|u|^{3}\B)^{1/3}
\B(\iint_{Q(2/3)}|p|^{3/2}\B)^{2/3},
\ea$$
which means
$$
\fqxoth (|u |^{3}+|h |^{3})+\min\B\{1,\,r_{3}^{\f{4\alpha-3}{3}}\B\}\fqxoth |u||p-\overline{p}_{3}|
\leq \varepsilon_{1}^{2/3}.$$
This proves \eqref{goal} in the case $k=3$.
Now, we assume that, for any $3\leq l\leq k$,
$$
\fqxol |u |^{3}+|h |^{3}+\min\B\{1,\,r_{l}^{\f{4\alpha-3}{3}}\B\}\fqxol |u||p-\overline{p}_{l}|
\leq \varepsilon_{1}^{2/3}.$$
For any $3\leq i\leq k$, by Proposition \ref{keyinindu}, \eqref{cond} and the above  induction hypothesis, we find that
\be\label{keyin}\ba
&\sup_{-r_{i}^{2\alpha}\leq t-t_{0}\leq0}\fbxoi(|u|^{2}+|h|^{2})
+r_{i}^{-n}\iint_{\tilde{Q}^{\ast}_{i}}y^{1-2\alpha}
(|\nabla^{\ast}u^{\ast}|^{2}+|\nabla^{\ast}h^{\ast}|^{2})\\
\leq& C \sup_{-r_{3}^{2\alpha}\leq t-t_{0}\leq0}\fbxoz(|u|^{2}+|h|^{2})+ r_{3}^{-n}\iint_{\tilde{Q}^{\ast}_{3}}y^{1-2\alpha}
(|\nabla^{\ast}u^{\ast}|^{2}+|\nabla^{\ast}h^{\ast}|^{2})\\
& +C\sum_{l=3}^{i}r_{l}^{2\alpha-1}
\fqxol\big(|u|^{3}+|h|^{3}+|u||p-\bar{p}_{l}|\big) +C \sum^{i}_{l=3}r_{l}^{\gamma'}\B(\fqxol |u|^{3}\B)^{1/3}\|f\|
\\\leq&
C  \varepsilon_{1} +C\sum_{l=3}^{i}\max\{r_{l}^{2\alpha-1},\,r_{l}^{\f{2}{3}\alpha}\}
\varepsilon_{1}^{2/3}+C\sum^{i}_{l=3}r_{l}^{\gamma'}\varepsilon_{1}^{11/9}\\
\leq&
C  \varepsilon_{1}^{2/3}. \ea
\ee
Making using of the H\"older inequality and \eqref{keyinq2}, we obtain
$$\ba
\int_{\tilde{B}_{k+1} }|u|^{3}dx
&\leq C\B(\int_{\tilde{B}_{k+1}}|u |^{2}dx\B)^{\f{6\alpha-n}{4\alpha}}
\B(\int_{\tilde{B}_{k+1}}|u |^{\f{2n}{n-2\alpha}}dx\B)^{\f{ (n- 2\alpha)}{4\alpha}}  \\
&\leq C\B(\int_{\tilde{B}_{k}}|u |^{2}\B)^{\f{6\alpha-n}{4\alpha}}
\B[\B(\int_{\tilde{B}_{k}^{\ast}}y^{1-2\alpha}|\nabla^{\ast} u^{\ast}|^{2}\Big)^{1/2}+r_{k}^{-\alpha}
\B(\int_{\tilde{B}_{k}}|u|^{2}\B)^{1/2}\B]^{\f{n}{2\alpha}},
\ea$$
which yields
$$\ba
\iint_{\tilde{Q}_{k+1} }|u|^{3}
\leq& C\B(\sup_{-r^{2\alpha}_{k}\leq t-t_{0}<0}\int_{\tilde{B}_{k}}|u |^{2}\B)^{\f{6\alpha-n}{4\alpha}}
 r_{k}^{2\alpha(1-\f{n}{4\alpha})}
 \B(\iint_{\tilde{Q}_{k}^{\ast}}y^{1-2\alpha}|\nabla^{\ast} u^{\ast}|^{2}\Big)^{\f{n}{4\alpha}}\\
 &+r_{k}^{-\alpha+2\alpha}
\B(\sup_{-r^{2\alpha}_{k}\leq t-t_{0}<0}\int_{\tilde{B}_{k}}|u|^{2}\B)^{3/2}.
\ea$$
This inequality, combined with \eqref{keyin}, implies that
\be\label{last2}\ba
\f{1}{r^{n+2\alpha}_{k+1}}\iint_{\tilde{Q}_{k+1}}
|u|^{3}\leq&  C\B( \f{1}{r^{n}_{k}}\sup_{-r^{2\alpha}_{k}\leq t-t_{0}<0}\int_{\tilde{B}_{k}}|u |^{2}\B)^{3/2}\\&
+C\B( \f{1}{r^{n}_{k}}\sup_{-r^{2\alpha}_{k}\leq t-t_{0}<0}\int_{\tilde{B}_{k}}|u |^{2}\B)^{\f{6\alpha-n}{4\alpha}}\B(r_{k}^{-n}\iint_{\tilde{Q}^{\ast}_{k}}y^{1-2\alpha}
|\nabla^{\ast}u^{\ast}|^{2}\B)^{\f{n}{4\alpha}}\\
\leq& C  \varepsilon_{1}.
\ea
\ee
Arguing in the
same manner as above, we see that
\be\label{last22}\ba
\f{1}{r^{n+2\alpha}_{k+1}}\iint_{\tilde{Q}_{k+1}}
|h|^{3}
 \leq  C  \varepsilon_{1}.
\ea
\ee
Employing Lemma \ref{inp} with $\mu=r_{k+1} $ and $\rho=1/2$, we arrive at
\begin{align}
&\f{1}{r_{k+1}^{n+3-4\alpha}}\iint_{\tilde{Q}_{k+1} } |u||p-\overline{p}_{k+1} |\nonumber
\\\leq &C\Big(\f{1}{r_{k+1}^{n+3-4\alpha}}\iint_{\tilde{Q}_{k+1}} |u|^{3}\Big)^{1/3}\Big(\f{1}{r_{k+1}^{n+3-4\alpha}}\iint_{\tilde{Q}_{k}} |u|^{3}+|h|^{3}\Big)^{2/3}\nonumber
\\&+Cr_{k+1}^{4\alpha-1} \B(\f{1}{r_{k+1}^{n+3-4\alpha}}\iint_{\tilde{Q}_{k+1}} |u|^{3}\B)^{1/3}
\sup_{-r^{2\alpha}_{k+1}\leq t -t_{0}<0}\int_{r_{k}<|y-x_{0}|<\f{1}{2}}
\f{|u|^{2}+|h|^{2}}{|y-x_{0}|^{n+1}}dy\nonumber\\&
+ C\Big( r_{k+1} \Big)^{\f{8}{3}\alpha-1}\Big(\f{1}{r_{k+1}^{n+3-4\alpha}}
\iint_{\tilde{Q}_{k+1}} |u|^{3}\Big)^{1/3} \Big(\iint_{\tilde{Q}(1/2)} |u|^{3}+|h|^{3}\Big)^{2/3}\nonumber\\
&+C\Big( r_{k+1} \Big)^{\f{8}{3}\alpha-1}\Big(\f{1}{r_{k+1}^{n+3-4\alpha}}
\iint_{\tilde{Q}_{k+1}} |u|^{3}\Big)^{1/3}\Big(\iint_{\tilde{Q}(1/2)} |p|^{3/2}\Big)^{2/3}.\nonumber
\end{align}
A simple computation together with \eqref{keyin} yields
$$\ba
\int_{r_{k}<|y-x_{0}|<\f12}\f{|u|^{2}+|h|^{2}}{|y-x_{0}|^{n+1}}dy
&=\sum^{k-1}_{l=1}
\int_{r_{l+1}<|y-x_{0}|<r_{l}}\f{|u|^{2}+|h|^{2}}{|y-x_{0}|^{n+1}}\\
&\leq C\sum^{k}_{l=1}r_{l}^{-1}
\varepsilon^{2/3}_{1},
\ea$$
which implies
\begin{align}
&\f{1}{r_{k+1}^{n+2\alpha}}\iint_{\tilde{Q}_{k+1} } |u||p-\overline{p}_{k+1} |\nonumber
\\\leq &C\Big(\f{1}{r_{k+1}^{n+2\alpha}}\iint_{\tilde{Q}_{k+1}} |u|^{3}\Big)^{1/3}\Big(\f{1}{r_{k}^{n+2\alpha}}\iint_{\tilde{Q}_{k}} |u|^{3}+|h|^{3}\Big)^{2/3}
\nonumber\\&+Cr_{k+1}\varepsilon^{2/3}_{1} \B(\f{1}{r_{k+1}^{n+2\alpha}}\iint_{\tilde{Q}_{k+1}} |u|^{3}\B)^{1/3}
  \sum^{k}_{l=1}r_{l}^{-1}
\nonumber\\&+ C r_{k+1}^{\f{3-4\alpha}{3}}\Big(\f{1}{r_{k+1}^{n+2\alpha}}
\iint_{\tilde{Q}_{k+1}} |u|^{3}\Big)^{1/3} \Big( \iint_{ Q(2/3)} |u|^{3}+|h|^{3}\Big)^{2/3}\nonumber\\
&+Cr_{k+1}^{\f{3-4\alpha}{3}}\Big(\f{1}{r_{k+1}^{n+2\alpha}}
\iint_{\tilde{Q}_{k+1}} |u|^{3}\Big)^{1/3}\Big( \iint_{Q(1)} |p|^{3/2}\Big)^{2/3}\nonumber\\
\leq &C(2\varepsilon_{1}
+r_{k+1}^{\f{3-4\alpha}{3}}\varepsilon^{4/3}_{1}
+r_{k+1}^{\f{3-4\alpha}{3}}\varepsilon_{1}^{4/3})\nonumber\\
\leq &\f{1}{2}\varepsilon_{1}^{2/3}
\max\{1,\,r_{k+1}^{\f{3-4\alpha}{3}}\}\nonumber,
\end{align}
where we have used \eqref{last2}, \eqref{last22}, \eqref{eq3.5} and \eqref{cond}.

Collecting the above bounds, we eventually conclude
that
$$\iint_{\tilde{Q}_{k+1}} \!\!\!\!\!\!\!\!\!\! \! \!\!\!\!\!\!\!\!-\hspace{-0.15cm}-\hspace{-0.15cm}~~\,~
(|u|^{3}+|h|^{3})+\min\{1,\,r_{k+1}^{\f{4\alpha-3}{3}}\}
\iint_{\tilde{Q}_{k+1}} \!\!\!\!\!\!\!\!\!\! \! \!\!\!\!\!\!\!\!-\hspace{-0.15cm}-\hspace{-0.15cm}~~\,~|u| |p-\overline{p}_{k+1}|
\leq  \varepsilon_{1}^{2/3}. $$
This completes the proof of this theorem.
\end{proof}

\section{Proofs of Theorems \ref{the1.2}, \ref{the1.3} and \ref{the1.4}}
\label{sec4}
\setcounter{section}{4}\setcounter{equation}{0}
This section is devoted to proving Theorems \ref{the1.2}, \ref{the1.3} and   \ref{the1.4}. More precisely, based on Theorem \ref{the1.1},   we will exploit   appropriate iteration scheme via local
energy inequality \eqref{loc} with respect to different smallness hypothesis condition in these theorems.
Our fundamental tools are
 the decay type lemmas established in Section \ref{sec2} and Corollary \ref{coro} in Section \ref{sec3}.
Eventually, an iteration argument helps us to finish the proof.

\begin{proof}[Proof of Theorem 1.2]
Consider the smooth cut-off functions below
$$\phi_{1}(x,t)=\left\{\ba
&1,\,(x,t)\in Q(\mu),\\
&0,\,(x,t)\in Q^{c}(2\mu );
\ea\right.
$$
and
$$\phi_{2}(y)=\left\{\ba
&1,\, 0\leq y\leq \mu  ,\\
&0,\, y>2 \mu ;
\ea\right.
$$
 satisfying
$$
0\leq \phi_{1},\,\phi_{2}\leq1,~~~~~~\mu^{2\alpha}|\partial_{t}\phi_{1} (x,t)|
+\mu^{l}|\partial^{l}_{x}\phi_{1}(x,t)|\leq C ~~\text{and}~~~~~\mu^{l}|\partial^{l}_{y}\phi_{2}(y)|\leq C.
$$
Choosing   $\varphi_{1}=\phi_{1}$ and $\varphi_{2}=\phi_{1}\phi_{2}$ in  the local energy inequality  \eqref{loc} and utilizing the incompressible condition, we deduce from  \eqref{keyinq5}  that
\begin{align}
&\sup_{-\mu^{2\alpha}\leq t<0}\int_{B(\mu )}|u|^{2}+|h|^{2}
+2C\iint_{Q^{\ast}(\mu )}  y^{1-2\alpha}
\Big(|\nabla^{\ast}u^{\ast}|^{2}
+|\nabla^{\ast}h^{\ast}|^{2}\Big)\nonumber\\
\leq& C
\mu^{-2\alpha}\iint_{Q(2\mu)}(|u|^{2}+|h|^{2})+ C\iint_{Q^{\ast}(2\mu)}
y^{1-2\alpha}(|\nabla^{\ast} u^{\ast}|^{2}+|\nabla^{\ast} h^{\ast}|^{2})  \nonumber\\ &
 + C \mu^{-1}\iint_{Q (2\mu)}  (|u|\big||u|^{2}-|\bar{u}_{\rho}|^{2}\big||+2|u||p-\bar{p}_{\rho}|)\nonumber\\
 & + C \mu^{-1}\iint_{Q(2\mu)}|h|\big|u\cdot h- \overline{u}_{\rho}\cdot \overline{h}_{\rho}  \big|
 +2\iint_{Q(2\mu)}uf.
\label{loc1.2} \end{align}
According to the H\"older inequality,  \eqref{keyinq1} and \eqref{keyinq2},  for any $\mu\leq \rho/4$, we know that
\begin{align}
&\int_{B (2\mu)}|u|\big||u|^{2}-|\bar{u}_{\rho}|^{2}\big| \nonumber\\
\leq & C\mu^{\f{ (4\alpha-n)}{2 }}\B(\int_{B(2\mu)}|u|^{2n/(n-2\alpha)}\B)^{\f{n-2\alpha}{2n}}
\B(\int_{B (2\mu)}|u+\overline{u}_{\rho}|^{2}\B)^{1/2}
\B (\int_{B (2\mu)}|u-\overline{u}_{\rho}|^{2n/(n-2\alpha)}\B)^{\f{n-2\alpha}{2n}}
\nonumber\\
\leq & C\mu^{\f{ (4\alpha-n)}{2 }}\B(\int_{B(\rho/2)}|u|^{2n/(n-2\alpha)}\B)^{\f{n-2\alpha}{2n}}
\B(\int_{B (\rho/2)}|u+\overline{u}_{\rho}|^{2}\B)^{1/2}
\B (\int_{B (\rho/2)}|u-\overline{u}_{\rho}|^{2n/(n-2\alpha)}\B)^{\f{n-2\alpha}{2n}}
\nonumber\\
\leq & C\mu^{\f{ (4\alpha-n)}{2 }}\B\{\B(\int_{B^{\ast}(\rho)}y^{1-2\alpha}|\nabla^{\ast} u^{\ast}|^{2} \Big)^{1/2}\nonumber\\
&+\rho^{-\alpha}
\B(\int_{B(\rho)}|u|^{2}\B)^{1/2}\B\}
\B(\int_{B (\rho)}|u|^{2}\B)^{1/2}
\B (\int_{B^{\ast}(\rho)}
y^{1-2\alpha}|\nabla^{\ast} u^{\ast}|^{2}\B)^{1/2}\nonumber
\\
\leq & C\mu^{\f{ (4\alpha-n)}{2 }} \B(\int_{B^{\ast}(\rho)}y^{1-2\alpha}|\nabla^{\ast} u^{\ast}|^{2} \Big)\B(\int_{B (\rho)}|u|^{2}\B)^{1/2}\nonumber\\
&+C\rho^{-\alpha}\mu^{\f{ (4\alpha-n)}{2 }}
\B(\int_{B(\rho)}|u|^{2}\B)
\B (\int_{B^{\ast}(\rho)}
y^{1-2\alpha}|\nabla^{\ast} u^{\ast}|^{2}\B)^{1/2}.\nonumber
\end{align}
From the triangle inequality, arguing as the latter inequality, we infer that
 \begin{align}
&\int_{B(2\mu )}|h|\big|u\cdot h- \overline{u}_{\rho}\cdot \overline{h}_{\rho} \big| \nonumber\\
\leq & \int_{B(2\mu )}|h u|\big|  h-   \overline{h}_{\rho}\big|+ |h|\big|u -
 \overline{u}_{\rho}   \big||\overline{h}_{\rho}|
\nonumber\\
 \leq &
C\mu^{\f{ (4\alpha-n)}{2 }}\B(\int_{B(\rho/2)}|h|^{2n/(n-2\alpha)}\B)^{\f{n-2\alpha}{2n}}
\B(\int_{B(\rho/2)}|u|^{2}\B)^{1/2}
\B(\int_{B(\rho/2)}|h-\overline{h}_{\rho}|^{2n/(n-2\alpha)}\B)^{\f{n-2\alpha}{2n}}
\nonumber\\
&+
C\mu^{\f{ (4\alpha-n)}{2 }}\B(\int_{B(\rho/2)}|h|^{2n/(n-2\alpha)}\B)^{\f{n-2\alpha}{2n}}
\B(\int_{B(\rho/2)}|\overline{h}_{\rho}|^{2}\B)^{1/2}
\B (\int_{B(\rho/2)}|u-\overline{u}_{\rho}|^{2n/(n-2\alpha)}\B)^{\f{n-2\alpha}{2n}}
\nonumber\\
\leq & C\mu^{\f{ (4\alpha-n)}{2 }} \B(\int_{B^{\ast}(\rho)}y^{1-2\alpha}(|\nabla^{\ast} u^{\ast}|^{2} +|\nabla^{\ast} h^{\ast}|^{2})\Big)\B(\int_{B(\rho)}(|u|^{2}+|h|^{2})\B)^{1/2} \nonumber\\
&+C\rho^{-\alpha}\mu^{\f{ (4\alpha-n)}{2 }}
\B(\int_{B(\rho)}|u|^{2}+|h|^{2}\B)
\B (\int_{B^{\ast}(\rho)}
y^{1-2\alpha}(|\nabla^{\ast} u^{\ast}|^{2}+|\nabla^{\ast} h^{\ast}|^{2})\B)^{1/2}.
 \end{align}
As a consequence, we know that
\begin{align}
&\f{1}{\mu^{n+3-4\alpha}}\iint_{Q(\mu)}|u|\big||u|^{2}-|\bar{u}_{\rho}|^{2}\big|+|h|\big|u\cdot h- \overline{u}_{\rho}\cdot \overline{h}_{\rho} \big| \nonumber\\
\leq & C\B(\f{\rho}{\mu}\B)^{\f{3}{2}(n+2-4\alpha)} \B(\f{1}{\rho^{n+2-4\alpha}}\iint_{Q^{\ast}(\rho)}y^{1-2\alpha}\big(|\nabla^{\ast} u^{\ast}|^{2}+|\nabla^{\ast} h^{\ast}|^{2}\big) \Big)\nonumber\\
&\times\B(\f{1}{\rho^{n+2-4\alpha}}\sup_{-\rho^{2\alpha}\leq t<0}\int_{B (\rho)}(|u|^{2}+|h|^{2})\B)^{1/2} \nonumber\\&+C\B(\f{\rho}{\mu}\B)^{\f{3}{2}(n+2-4\alpha)-\alpha}
\B(\f{1}{\rho^{n+2-4\alpha}}\sup_{-\rho^{2\alpha}\leq t<0}\int_{B(\rho)}|u|^{2}+|h|^{2}\B)\nonumber\\
&\times\B (\f{1}{\rho^{n+2-4\alpha}}\iint_{Q^{\ast}(\rho)}
y^{1-2\alpha}\big(|\nabla^{\ast} u^{\ast}|^{2}+|\nabla^{\ast} h^{\ast}|^{2}\big)\B)^{1/2}\nonumber\\
\leq & C
\B(\f{\rho}{\mu}\B)^{\f{3}{2}(n+2-4\alpha)} \B(E(\rho)^{1/2} E_{\ast}(\rho)+ E(\rho)E^{1/2}_{\ast}(\rho)\B).\label{u3}
\end{align}

For any $1/2<\alpha<3/4$, we find that
\be\label{fff}\ba
F_{q}(\mu)\leq  C\mu^{\f{2\alpha-1}{2\alpha}(2\alpha+n)}\B(\iint_{\mathbb{R}^{n}}|f(x,t)|^{q}dx dt\Big),~~q>\f{2\alpha+n}{2\alpha},
\ea\ee
which means  that $F_{q}(\mu)$  tends to  $0$ as $\mu \rightarrow 0$.
For $\alpha\geq3/4$, we see that
$$
\mu^{2\alpha-1+\gamma}\|f\|_{M_{2\alpha,\gamma}}
\rightarrow0 ~~\text{as}~~\mu\rightarrow0.
$$
Thus the smallness of external force has been shown.
Next, we will prove the smallness of the other terms in \eqref{cond}.
First, by the H\"older inequality, for any $\alpha\geq3/4$, it holds
$$\ba
F_{3/2}(\mu)&=\mu^{3/2(4\alpha-1) -(2\alpha+n)}\iint_{Q(\mu)}|f(x,t)|^{3/2}dx dt\\
\leq& C\mu^{\f{3\gamma}{2}+3\alpha-\f{3}{2}}
\Big(\f{1}{\mu^{2\alpha(\gamma-2\alpha)}}\fqxomu|f(x,t)|^{2\alpha}dx dt\Big)^{3/4\alpha}\\
\leq& C\mu^{\f{3\gamma}{2}+3\alpha-\f{3}{2}}
\|f\|^{3/2}_{M_{2\alpha,\gamma}}.
\ea
$$
Therefore, we see that $F_{3/2}(\mu)\rightarrow0$ as $\mu\rightarrow0$ when $\alpha\geq3/4$.
By similar arguments, the same statement is also valid if  $1/2<\alpha<3/4$.
Therefore, we see that there is a constant $\mu_{1}$ such that for any $\mu\leq \mu_{1}$, $F_{3/2}(\mu)\leq\varepsilon_{2}$.
The condition   \eqref{parabolic} yields
that there exists $\mu_{0}\in(0,\,\mu_1)$ such that
\be\label{condition1}E_{\ast}(\mu)\leq \varepsilon_{2},~\text{for any} ~~\mu\leq \mu_{0}.\ee
Plugging \eqref{u3},  \eqref{presure2} in Lemma \ref{presure} into \eqref{loc1.2} and using
the Cauchy-Schwarz inequality, we have
\begin{align}
&E(\mu )+E_{\ast}(\mu )
\nonumber\\\leq & C
E_{3}^{2/3}(2\mu)+CE_{\ast}(2\mu)+
C
\B(\f{\rho}{\mu}\B)^{\f{3}{2}(n+2-4\alpha)} \B(E(\rho)^{1/2} E_{\ast}(\rho)+ E(\rho)E^{1/2}_{\ast}(\rho)\B)\nonumber\\
&+ CE^{1/3}_{3}(u,2\mu)P^{2/3}_{3/2}(2\mu)+CE^{1/3}_{3}(u,2\mu)F^{2/3}_{3/2}(2\mu)
\nonumber\\\leq& C
E_{3}^{2/3}( 2\mu)+CE_{\ast}(2\mu)+
C\B(\f{\rho}{\mu}\B)^{\f{3}{2}(n+2-4\alpha)} \B(E(\rho)^{1/2} E_{\ast}(\rho)+ E(\rho)E^{1/2}_{\ast}(\rho)\B)\nonumber\\
 &+ P^{\f{2n}{n+2\alpha}}_{\f{n+2\alpha}{n}}( 2\mu)+F^{4/3}_{3/2}(2\mu)
\nonumber\\
\leq&  C \left(\dfrac{\rho}{\mu}\right)^{2+n-4\alpha}
E^{\f{6\alpha-n}{6\alpha}}(\rho)E^{\f{n }{6\alpha}}_{\ast}(\rho)
    +C\left(\dfrac{\mu}{\rho}\right)^{4\alpha-2}E(\rho)\nonumber\\&
    +C\B(\f{\rho}{\mu}\B)^{n+2-4\alpha} E_{\ast}(\rho)+
\B(\f{\rho}{\mu}\B)^{\f{3}{2}(n+2-4\alpha)} \B(E(\rho)^{1/2} E_{\ast}(\rho)+ E(\rho)E^{1/2}_{\ast}(\rho)\B)\nonumber\\& + C\left(\dfrac{\rho}{\mu}\right)^{2n+4-8\alpha}
E^{\f{4\alpha }{n+2\alpha  }}(\rho)E^{\f{2n}{n+2\alpha}}_{\ast}(\rho)\nonumber\\
&+C\left(\f{\mu}{\rho}\right)
^{\f{2(8\alpha^{2}+2\alpha(n-1)-n)}{n+2\alpha}}
P^{\f{2n}{n+2\alpha}}_{\f{n+2\alpha}{n}}(\rho)
+\B(\f{\rho}{\mu}\B)^{\f{4}{3}n+2-\f{16\alpha}{3}}F^{4/3}_{3/2}(\rho),
\label{4.6}\end{align}
where we have used \eqref{ineq1/2} in Lemma \ref{ineq}.
Before going further, we set
$$G(\mu)=E(\mu)+E_{\ast}(\mu)
+P^{\f{2n}{n+2\alpha}}_{\f{n+2\alpha}{n}}(\mu).$$
It follows from \eqref{4.6} and \eqref{presure2} in Lemma \ref{presure} that
\begin{align}
 G(\mu)
\leq&   C \left(\dfrac{\rho}{\mu}\right)^{2+n-4\alpha}
E^{\f{6\alpha-n}{6\alpha}}(\rho)E^{\f{n }{6\alpha}}_{\ast}(\rho)
    +C\left(\dfrac{\mu}{\rho}\right)^{4\alpha-2}E(\rho)
    \nonumber\\&
+\B(\f{\rho}{\mu}\B)^{\f{3(n+2-4\alpha)}{2}}
\B[G^{1/2}(\rho)E_{\ast}(\rho)
+G(\rho)E^{1/2}_{\ast}(\rho)\B]
\nonumber\\ &  +\B(\f{\rho}{\mu}\B)^{n+2-4\alpha}E_{\ast}(\rho)
+ \B(\f{\rho}{\mu}\B)^{2n}F^{4/3}_{3/2}(\rho)     \nonumber\\ &    +\left(\dfrac{\rho}{\mu}\right)^{2n+4-8\alpha}
E^{\f{4\alpha }{n+2\alpha  }}(\rho)E^{\f{2n}{n+2\alpha}}_{\ast}(\rho)
+C\left(\f{\mu}{\rho}\right)
^{\f{2(8\alpha^{2}+2\alpha(n-1)-n)}{n+2\alpha}}
P^{\f{2n}{n+2\alpha}}_{\f{n+2\alpha}{n}}(\rho).\nonumber
\end{align}
Notice that $\f{4\alpha }{n+2\alpha}<1$ and $8\alpha^{2}+2\alpha(n-1)-n>0$ for $n/4\leq\alpha<n/2.$ Then, the latter inequality yields
$$
 G(\mu)
\leq C \B(\f{\rho}{\mu}\B)^{2(n+2-4\alpha)}
 G^{\tau_{1}}(\rho)E^{\tau_{2}}_{\ast}(\rho)
 +\B(\f{\mu}{\rho}\B)^{^{\tau_{3}}} G(\rho)+ \B(\f{\rho}{\mu}\B)^{2n}F^{4/3}_{3/2}(\rho),
$$
where $0\leq\tau_{1}<1,\,\tau_{2},\,\tau_{3}>0$.
Using the Young inequality, we infer that
$$\ba
 G(\mu)
&\leq C \B(\f{\rho}{\mu}\B)^{6}
 G(\rho)E^{\tau_{4}}_{\ast}(\rho)+
 \B(\f{\mu}{\rho}\B)^{^{\tau_{3}}} G(\rho)+\B(\f{\rho}{\mu}\B)^{6}(E^{\tau_{4}}_{\ast}(\rho)
 +F^{4/3}_{3/2}(\rho))\\
&\leq C_{1}\lambda^{-6}\varepsilon_{2}^{\tau_{4}}G(\rho)+
C_{2}\lambda^{\tau_{3}}G(\rho)+C_{3}\lambda^{-6}
\varepsilon_{2}^{\tau_{5}},
\ea
$$
where  ${\alpha}>0 , \,\tau_{4},\,\tau_{5}>0,\,\lambda=\f{\mu}{\rho}\leq \f{1}{32}$ and $\rho\leq \mu_{0}$.

Choosing $\lambda,~\varepsilon_{2}$ such that $\bar{q}=2C_{2}\lambda^{\tau_{3}}<1$ and
$$
\varepsilon_{2}=\min\bigg\{\B(\f{\bar{q}\lambda^{6}}{2C_{1}}\B)^{1/\tau_{4}},
\B(\f{(1-\bar{q})\lambda^{10+2n-8\alpha}\varepsilon_{1}}{2C_{3}}\B)
^{\f{1}{ \tau_{5}}}\bigg\},
$$
we obtain
\be\label{iterr}
G(\lambda\rho)\leq \bar{q}G(\rho)+C_{3}\lambda^{-6}\varepsilon_{2}^{\tau_{5}}. \ee
Iterating $\eqref{iterr}$, we deduce that
 \[
G(\lambda^{k}\rho)\leq \bar{q}^{k}G(\rho)+\f{1}{2}\lambda^{2n+4-8\alpha}\varepsilon_{1}. \]
From the definition of $G(\mu)$, there exists a positive number $ K_{0}$~such that $$\bar{q}^{K_{0}}G(\mu_{0})\leq 4 \f{M(\|u\|_{L^{\infty}L^{2}},\|u\|_{L^{2}H^{\alpha}},
\|p\|_{L^{3/2}L^{3/2}})}{\mu_{0}^{n+2-4\alpha}}\bar{q}^{K_{0}}
\leq\dfrac{1}{2}\varepsilon_{1}\lambda^{2n+4-8\alpha}.$$
Let  $\mu_{2}:=\lambda^{K_{0}}\mu_{0}$, ~therefore for all $0<\mu\leq \mu_{2}$, $\exists k\geq
K_{0}$,~such that $\lambda^{k+1}\mu_{0}\leq \mu\leq \lambda^{k} \mu_{0}$.
Thus, we have
 \begin{align}
&E(\mu)+E_{\ast}(\mu)+P^{\f{2n}{n+2\alpha}}_{\f{n+2\alpha}{n}}(\mu)\nonumber\\
\leq& \sup_{-(\lambda^{k}\mu_{0})^{2\alpha}\leq   t<0}\frac{1}{(\lambda^{k+1}\mu_{0})^{n+2-4\alpha}}\int_{B(\lambda^{k}\mu_{0})}(|u|^2+|h|^{2})\nonumber\\
&+\frac{1}{(\lambda^{k+1}\mu_{0})^{n+2-4\alpha}}\iint_{Q^{\ast}(\lambda^{k}\mu_{0})}y^{1-2\alpha}(|\nabla^{\ast} u^{\ast}|^2+|\nabla^{\ast} h^{\ast}|^2)\nonumber\\
&+
\frac{1}{(\lambda^{k+1}\mu_{0})^{2n+4-8\alpha}}
\B(\iint_{Q(\lambda^{k}\mu_{0})}|p|^{\frac{n+2\alpha}{n}}dxdt\B)^{2n/(n+2\alpha)}
\nonumber\\
\leq & \f{1}{\lambda^{2n+4-8\alpha}}G(\lambda^{k}\mu_{0}) \nonumber\\
\leq &\f{1}{\lambda^{2n+4-8\alpha}}(\bar{q}^{k-K_{0}}\bar{q}^{K_{0}}
G(\mu_{0})+\f{1}{2}\lambda^{2n+4-8\alpha}\varepsilon_{1})\nonumber\\
\leq &\varepsilon_{1}\label{itera}.
 \end{align}
By a  scaling argument together with Theorem \ref{the1.1}, we end the proof of Theorem \ref{the1.2}.
\end{proof}

\begin{proof}[Proof of Theorem  \ref{the1.3}]
Thanks to \eqref{ineq1/2} in Lemma \ref{ineq} and \eqref{presure33} in Lemma \ref{presure}, we see that
$$\ba &E^{2/3}_{3}(\rho/4) \leq  C
(E (\rho)+E_{\ast}(\rho))
    +CE(\rho),\\
&P_{3/2}^{2/3}(\rho/4)\leq  C
\B(E (\rho)+E_{\ast}(\rho)\B)+C P_{3/2}^{2/3}(\rho).
\ea
$$
By the H\"older inequality, \eqref{coro1} in Corollary \ref{coro} and the latter inequalities, we see that
\begin{align}
E(\mu)+E_{\ast}(\mu)
\leq& C    \B(\f{\mu}{\rho}\B)^{4\alpha-2}\B(E (\rho/4)+E_{\ast}(\rho/4)\B)+
 \Big(\f{\rho}{\mu}\Big)^{n+3-4\alpha}E_{3}(u,\rho/4)\nonumber\\&
+ \Big(\f{\rho}{\mu}\Big)^{n+3-4\alpha}E^{1/3}_{3}(u,\rho/4)
E^{2/3}_{3}(h,\rho/4)+\Big(\f{\rho}{\mu}\Big)^{n+3-4\alpha}E^{1/3}_{3}(u,\rho/4)
P^{2/3}_{ 3/2}(\rho/4)\nonumber\\
\leq&    C \B(\f{\mu}{\rho}\B)^{4\alpha-2}\B(E ( \rho)+E_{\ast}(  \rho)\B)+
 \Big(\f{\rho}{\mu}\Big)^{n+3-4\alpha}E_{3}(u, \rho)\nonumber\\&
+ \Big(\f{\rho}{\mu}\Big)^{n+3-4\alpha}E^{1/3}_{3}(u,\rho)
\B(E ( \rho)+E_{\ast}( \rho)\B) \nonumber\\
&+\Big(\f{\rho}{\mu}\Big)^{n+3-4\alpha}E^{1/3}_{3}(u,\rho)
\B(E ( \rho)+E_{\ast}( \rho)+ P_{3/2}^{2/3}( \rho)\B)\label{4.10}.
 \end{align}
With the help of \eqref{presure33} again and Young's inequality, we infer that
$$\ba
P_{3/2}^{2/3}(\mu) &\leq \left(\dfrac{\rho}{\mu}\right)^{\f{6+3n-12\alpha}{3}}
E^{\f{6\alpha-n}{6\alpha}}(\rho)E^{\f{n }{6\alpha}}_{\ast}(\rho)+C\left(\f{\mu}{\rho}\right)
^{\f{8\alpha-3}{3}}P_{3/2}^{2/3}(\rho)  \\
&\leq \left(\dfrac{\rho}{\mu}\right)^{ 2+ n-4\alpha }
\B(E(\rho)+E_{\ast}(\rho)\B)+C\left(\f{\mu}{\rho}\right)
^{\f{8\alpha-3}{3}}
P^{2/3}_{3/2}(\rho).
\ea$$
Now, set
$$
G_{1}(\mu)=E (\mu)+E_{\ast}(\mu)+ \varepsilon_{3}^{1/4}P_{3/2}^{2/3}(\mu).
$$
It follows from \eqref{4.10} and the latter inequality that
\begin{align}
G_{1}(\mu)\leq & C \B(\f{\mu}{\rho}\B)^{4\alpha-2}G_{1}(\rho)+
 \Big(\f{\rho}{\mu}\Big)^{n+3-4\alpha}E_{3}(u,\rho)
+ \Big(\f{\rho}{\mu}\Big)^{n+3-4\alpha}E^{1/3}_{3}(u,\rho)
G_{1}(\rho)\nonumber\\
&+\Big(\f{\rho}{\mu}\Big)^{n+3-4\alpha}E^{1/3}_{3}(u,\rho)
\B(G_{1}(\rho)+\varepsilon_{3}^{-1/4}G_{1}(\rho)\B)\nonumber\\&
+\left(\dfrac{\rho}{\mu}\right)^{2+ n-4\alpha}
\varepsilon_{3}^{1/4}G_{1}(\rho)+C\left(\f{\mu}{\rho}\right)
^{\f{8\alpha-3}{3}}G_{1}(\rho)\nonumber\\
\leq & C \Big(\f{\rho}{\mu}\Big)^{n+3-4\alpha}\varepsilon_{3}^{1/12}
 G_{1}(\rho)+ C \B(\f{\mu}{\rho}\B)^{\tau_{6}}G_{1}(\rho)+
 \Big(\f{\rho}{\mu}\Big)^{n+3-4\alpha} \varepsilon_{3}
 \nonumber\\
\leq & C \lambda^{4\alpha -n-3}\varepsilon_{3}^{1/12}
 G_{1}(\rho)+ C \lambda^{\tau_{6}}G_{1}(\rho)+
 \lambda^{4\alpha -n-3} \varepsilon_{3},\nonumber
\end{align}
where $\tau_{6}>0$, $\lambda=\frac{\mu}{\rho}\leq \frac{1}{32}$.
An iteration argument completely analogous to that adopted in the proof of Theorem \ref{the1.2} yields the smallness of $E (\mu)+E_{\ast}(\mu)$.  This together with Theorem
 \ref{the1.2} completes the
 proof of Theorem \ref{the1.3}.
\end{proof}
\begin{proof}[Proof of Theorem \ref{the1.4}]
(1)~Recall that the pair $(q',\ell')$ is  the conjugate index of $(q,\,\ell)$ in \eqref{pq1.1}.
By means of the H\"{o}lder  inequality,  \eqref{keyinq7}  and the Young inequality,  we arrive at
$$\ba
\|u\|_{L^{2q'}_{t}L^{2\ell'}_{x}(Q(\rho/2))}
&\leq C  \|u\|^{1-\f{1}{q'}}_{L_{t}^{\infty}L^{2}_{x} (Q(\rho/2))}
\| u\|^{\f{1}{q'}}_{L_{t}^2L_{x}^{\f{2n}{n-2\alpha}}(Q(\rho/2))}\\
&\leq C  \|u\|^{\f{1}{q}}_{L_{t}^{\infty}L^{2}_{x} (Q(\rho))}
\B(\|y^{1/2-\alpha}\nabla^{\ast} u^{\ast}\|_{L^2(Q^{\ast}(\rho))}+\|u\|_{L^{\infty,2}(Q(\rho))}\B)^{1-\f{1}{q}}
,\ea
$$
where  $\alpha/q' + n/2\ell' = n/2. $
In view of the H\"older inequality and the last inequality, we deduce that
\be\label{wzii1}\ba
\rho^{-(n+3-4\alpha)} \int_{-(\rho/2)^{2\alpha}}^{0}\int_{B(\rho/2)}|u||h|^{2}
\leq& C\rho^{-(n+3-4\alpha)} \|u\|_{L^{q,\ell}(Q(\rho/2))}
\|h\|^{2}_{L^{2q',2\ell'}(Q(\rho/2))}\\
\leq&  C [E(\rho)+E_{\ast}(\rho)] E_{q,\ell}(u,\rho).\ea
\ee
A similar procedure yields
 \be\label{wzii}\ba
\rho^{-(n+3-4\alpha)} \int_{-(\rho/2)^{2\alpha}}^{0}\int_{B(\rho/2)}|u|^{3}
\leq   C [E(\rho)+E_{\ast}(\rho)] E_{q,\ell}(u,\rho)
,\ea
\ee
which together with \eqref{wzii1} and \eqref{coro1} in Corollary \ref{coro}
implies that
\be\label{4.13}\ba
&E(\mu)+E_{\ast}(\mu)\\
\leq&C \B(\f{\mu}{\rho}\B)^{4\alpha-2}
\B(E(\rho/4)+E_{\ast}(\rho/4)\B)
+C\Big(\f{\rho}{\mu}\Big)^{n+3-4\alpha}
\B[E(\rho)+E_{\ast}(\rho)\B] E_{q,\ell}(u,\rho)
\\&+C\Big(\f{\rho}{\mu}\Big)^{n+3-4\alpha}
E^{1/3}_{3}(u,\rho)P^{2/3}_{3/2}(\rho)\\
\leq&C \B(\f{\mu}{\rho}\B)^{4\alpha-2}
\B(E(\rho)+E_{\ast}(\rho)\B)+\Big(\f{\rho}{\mu}\Big)^{n+3-4\alpha}
\B[E(\rho)+E_{\ast}(\rho)\B] E_{q,\ell}(u,\rho)
\\&+\Big(\f{\rho}{\mu}\Big)^{n+3-4\alpha}
E^{1/3}_{q,\ell}(u,\rho)\B[E(\rho)+E_{\ast}(\rho)\B]^{1/3} P^{2/3}_{3/2}(\rho).
\ea
\ee
It follows form \eqref{fns} and condition \eqref{hbd}  that
$$\ba
\varepsilon_{4}^{1/4}P_{3/2}(\mu)\leq &
C\varepsilon_{4}^{1/4}\left(\f{\rho}{\mu}\right)
^{n+3-4\alpha}E(\rho)^{\f{1}{q}}E_{\ast}(\rho)
^{1-\f{1}{q}}E_{q,\ell}(\rho)
+C\left(\f{\mu}{\rho}\right)^{\f{8\alpha-3}{2}}\varepsilon_{4}^{1/4}P_{3/2}(\rho)\\
\leq &
C(M)\varepsilon_{4}^{1/4}\left(\f{\rho}{\mu}\right)
^{n+3-4\alpha}E(\rho)^{\f{1}{q}}E_{\ast}(\rho)
^{1-\f{1}{q}}
+C\left(\f{\mu}{\rho}\right)^{\f{8\alpha-3}{2}}
\varepsilon_{4}^{1/4}P_{3/2}(\rho).
 \ea$$
Set $G_{2}(\mu)=E(\mu)+E_{\ast}(\mu)+\varepsilon_{4}^{1/4}P_{3/2}(\mu)$ and $\lambda=\f{\mu}{\rho} $, then the latter inequality  together with \eqref{4.13} implies that
\begin{align}
 G_{2}(\lambda\rho)\leq& C
\lambda^{4\alpha-2}
\B(E(\rho)+E_{\ast}(\rho)\B)+\lambda^{4\alpha -n-3}
\B[E(\rho)+E_{\ast}(\rho)\B] E_{q,\ell}(u,\rho)
\nonumber\\
&+\lambda^{4\alpha -n-3}
E^{1/3}_{q,\ell}(u,\rho)\varepsilon_{4}^{-\f{1}{6}}G_{2}(\rho)
\nonumber\\
&+C\varepsilon_{4}^{1/4}\left(\f{\rho}{\mu}\right)
^{n+3-4\alpha}E(\rho)^{\f{1}{q}}E_{\ast}(\rho)
^{1-\f{1}{q}}
+C\left(\f{\mu}{\rho}\right)^{\f{8\alpha-3}{2}}
\varepsilon_{4}^{1/4}P_{3/2}(\rho)\nonumber\\
\leq& C\lambda^{4\alpha-2}
 G_{2}(\rho)+\lambda^{4\alpha -n-3}
 G_{2}(\rho) E_{q,\ell}(u,\rho)\nonumber\\
&+\lambda^{4\alpha -n-3}
E^{1/3}_{q,\ell}(u,\rho)\varepsilon_{4}^{-\f{1}{6}}G_{2}(\rho)
\nonumber\\
&+C\varepsilon_{4}^{1/4}\lambda^{4\alpha -n-3}G_{2}(\rho)
+C\lambda^{\f{8\alpha-3}{2}}
G_{2}(\rho)\nonumber\\
\leq&\lambda^{-\tau_{8}}
\varepsilon_{4}^{\f{1}{6}}G_{2}(\rho) +C
\lambda^{\tau_{7}}
 G_{2}(\rho),
 \end{align}
where $\tau_{7},~\tau_{8}>0$.
With this in hand,
 exactly as in the derivation of  \eqref{itera} and using Theorem \ref{the1.2}, one can finish the proof of first part of Theorem \ref{the1.4}.

\noindent
(2) By a slight modified proof of \eqref{4.13} and \eqref{wzp}, we arrive at
 \begin{align}
&E(\mu)+E_{\ast}(\mu)\nonumber\\
\leq&
C \B(\f{\mu}{\rho}\B)^{4\alpha-2}
\B(E(\rho)+E_{\ast}(\rho)\B)+\Big(\f{\rho}{\mu}\Big)^{n+3-4\alpha}
\B[E(\rho)\\&+E_{\ast}(\rho)\B] E_{q,\ell}(u,\rho)+\Big(\f{\rho}{\mu}\Big)^{n+3-4\alpha}E_{q,\ell}(u,\rho)P_{q',\ell'}(\rho)\nonumber\\
\leq&
C \B(\f{\mu}{\rho}\B)^{4\alpha-2}
\B(\f{\varrho}{\rho}\B)^{n+2-4\alpha}
\B(E(\varrho)+E_{\ast}(\varrho)\B)\nonumber\\&+
\Big(\f{\rho}{\mu}\Big)^{n+3-4\alpha}\Big(\f{\varrho}{\rho}\Big)^{n+3-4\alpha}
\B[E(\varrho)+E_{\ast}(\varrho)\B] E_{q,\ell}(u,\varrho)\nonumber\\&+
\Big(\f{\rho}{\mu}\Big)^{n+3-4\alpha}\Big(\f{\varrho}{\rho}\Big)
E_{q,\ell}(u,\varrho)\nonumber\\
&\times\B[\left(\dfrac{\varrho}{\rho}\right)^{n+2-4\alpha}
E^{\f{1}{q}}(\varrho)E^{1-\f{1}{q}}_{\ast}(\varrho)
+C
\left(\f{\rho}{\varrho}\right)
^{4\alpha-\f{n}{\ell}-1}P_{q',\ell'}(\varrho)\B].\nonumber
 \end{align}
It follows from \eqref{wzp} that
$$\ba
P_{q',\ell'}(\mu)&\leq \left(\dfrac{\varrho}{\mu}\right)^{n+2-4\alpha}
E^{\f{1}{q}}(\varrho)E^{1-\f{1}{q}}_{\ast}(\varrho)
+C
\left(\f{\mu}{\varrho}\right)
^{4\alpha-\f{n}{\ell}-1}P_{q',\ell'}(\varrho).\ea$$
We denote $G_{4}(\mu)=E(\mu)+E_{\ast}(\mu)+\varepsilon_{5}^{1/4}P_{_{q',\ell'}}(\mu)$ and $\lambda=\f{\mu}{\rho}=\f{\rho}{\varrho} $, then the above estimates imply that
$$\ba
G_{4}(\mu)
\leq&
\lambda^{8\alpha-n-4}G_{4}(\varrho)
+\lambda^{8\alpha -2n-6}G_{4}(\varrho)E_{q,\ell}(u,\varrho)\\
&+\lambda^{8\alpha-2n-6}E_{q,\ell}(u,\varrho)G_{4}(\varrho)+
\lambda^{8\alpha-n-5-\f{n}{\ell}}\varepsilon_{5}^{-1/4}G_{4}(\varrho)E_{q,p}(u;\varrho)\\
&+\lambda^{4\alpha-n-2}\varepsilon_{5}^{1/4}G_{4}(\varrho)+
\lambda^{4\alpha-\f{n}{\ell}-1}G_{4}(\varrho)\\
\leq&C \lambda^{\tau_{9}}G_{4}(\varrho)+\lambda^{-\tau_{10}}G_{4}(\varrho)
\varepsilon_{5}^{\tau_{11}},
\ea$$
where $\tau_{9}$, $\tau_{10}$, $\tau_{11}>0.$
Based on the latter relation, along the same line of  the
iteration argument used in the proof   Theorem \ref{the1.2}, one can complete the proof of  second part
of Theorem \ref{the1.4}.
\end{proof}

\appendix
\section{Proof  of useful inequalities}
\label{appendix}
To make our paper more self-contained and more readable,
we   outline the   proof of inequalities \eqref{keyinq1}-\eqref{keyinq8} stated in Section 2.
It is clear that it is enough to show the following inequalities
\begin{align}
\|u-\overline{u}_{\mu}\|_{L^{\f{2n}{n-2s}}(B(\mu/2))}\leq C\B(\int_{B^{\ast}(\mu)}y^{1-2s}|\nabla^{\ast} u^{\ast}|^{2}dxdy\Big)^{1/2},\label{akeyinq1}\\
\int_{B^{\ast}(\mu)}y^{1-2s}|u^{\ast}|^{2}\leq C\mu^{2-2s}
\int_{B(\mu)}|u|^{2}+C\mu^{2}\int_{B^{\ast}(\mu)}
y^{1-2s}|\nabla^{\ast} u^{\ast}|^{2}dxdy,\label{akeyinq3}
\\
\|u\|_{L^{\f{2n}{n-2s}}(B(\mu/2))}\leq C\B(\int_{B^{\ast}(\mu)}y^{1-2s}|\nabla^{\ast} u^{\ast}|^{2}dxdy\Big)^{1/2}+C\mu^{-s}
\B(\int_{B(\mu)}|u|^{2}\B)^{1/2},\label{akeyinq2}\\
\|u\|_{L^{\f{2n}{n-2s}}(B(2/3))}\leq C\B(\int_{B^{\ast}(1)}y^{1-2s}|\nabla^{\ast} u^{\ast}|^{2}dxdy\Big)^{1/2}+C
\B(\int_{B(1)}|u|^{2}\B)^{1/2}.\label{akeyinq6}
\end{align}

\begin{proof}
Consider the usual cut-off functions
$$\eta_{1}(x)=\left\{\ba
&1,\,~~~x\in B(\hbar\mu ),~0<\hbar<1,\\
&0,\,~~~x\in B^{c}(\mu),
\ea\right.
$$
and
$$\eta_{2}(y)=\left\{\ba
&1,\, ~~~0\leq y\leq \hbar\mu,\\
&0,\,~~~ y>\mu,
\ea\right.
$$
 satisfying
$$
0\leq \eta_{1},\,\eta_{2}\leq1,
~~~\text{and}~~~\mu |\partial_{x}\eta_{1}(x)|
+\mu|\partial_{y}\eta_{2}(y)|\leq C.
$$
It is easy to derive from \eqref{eqnorm} and \eqref{mini} that
\begin{align}
\|u\eta_{1}\|^{2}_{\dot{H}^{s}}&=
\int_{\mathbb{R}_{+}^{n+1}} y^{1-2s} | \nabla^{\ast} (u\eta_{1})^{\ast} |^{2}dxdy\nonumber\\
&\leq C\int_{\mathbb{R}_{+}^{n+1}} y^{1-2s} | \nabla^{\ast}   (u^{\ast}\eta_{2}\eta_{1})|^{2}dxdy\nonumber\\
&\leq C\mu^{-2}\int_{B^{\ast}(\mu)}y^{1-2s}|u^{\ast}|^{2}dxdy+
C\int_{B^{\ast}(\mu)}y^{1-2s}|\nabla^{\ast}u^{\ast}|^{2}dxdy.\label{A.1}
\end{align}
Thanks to the classical weighted Poincar\'e inequality, we infer that
\be\label{app2}
\int_{B^{\ast}(\mu)}y^{1-2s}|u^{\ast}- \overline{u^{\ast}}_{B^{\ast}(\mu)}|^{2}\leq C\mu^{2}\int_{B^{\ast}(\mu)}y^{1-2s}|\nabla^{\ast}u^{\ast}|^{2},
\ee
where
$$
\overline{u^{\ast}}_{B^{\ast}(\mu)}=\f{1}{|B^{\ast}(\mu)|} \int_{B^{\ast}(\mu)}y^{1-2s} u^{\ast}dxdy
$$
and the volume of $|B^{\ast}(\mu)|=\int_{B^{\ast}(\mu)}y^{1-2s} dxdy$ is proportional to $\mu^{n+2-2s}$.\\
The above inequalities imply
\be\B(\int_{B(\hbar\mu)}|u-\overline{u^{\ast}}_{B^{\ast}(\mu)}|
^{\f{2n}{n-2s}}\B)^{\f{n-2s}{n}}
\leq C \int_{B^{\ast}(\mu)}y^{1-2s}|\nabla^{\ast}u^{\ast}|^{2}.
\label{app1}\ee
It follows from  $u^{\ast}(x,y)= u(x)+\int^{y}_{0}\partial_{z}u^{\ast}dz$ that
\begin{align}
\bigg|\overline{u^{\ast}}_{B^{\ast}(\mu)}-\overline{u}_{\mu}\bigg|&=\bigg|
\f{1}{|B^{\ast}(\mu)|} \int_{B^{\ast}(\mu)}y^{1-2s}
\int^{y}_{0}\partial_{z}u^{\ast}dz\bigg| \nonumber\\
&\leq \f{1}{|B^{\ast}(\mu)|}\int_{B (\mu)}\int_{0}^{\mu}y^{1-2s}\B(\int^{y}_{0}z^{1-2s}|\partial_{z}u^{\ast}|^{2}dz\B)^{1/2}
\B(\int^{y}_{0}z^{-(1-2s)}dz\B)^{1/2}dxdy \nonumber\\
&\leq C\mu^{s-\f{n}{2}}\B(
\int_{B^{\ast}(\mu)} y^{1-2s}|\nabla^{\ast } u^{\ast}|^{2}dxdy\B)^{1/2},
\label{app3}
\end{align}
where the H\"older inequality was used twice.\\
 Combining
\eqref{app1} with the latter inequality, we deduce that
$$\ba
\B(\int_{B(\hbar\mu)}|u-\overline{u}_{\mu}|^{\f{2n}{n-2s}}\B)^{\f{n-2s}{2n}}
\leq& \B(\int_{B(\hbar\mu)}|u-\overline{u}_{\mu}|^{\f{2n}{n-2s}}\B)^{\f{n-2s}{2n}}\\
\leq &\B(\int_{B(\hbar\mu)}|u-\overline{u^{\ast}}_{B^{\ast}(\mu)}|^{\f{2n}{n-2s}}\B)^{\f{n-2s}{2n}}
\\&+ \B(\int_{B(\hbar\mu)}|\overline{u^{\ast}}_{B^{\ast}(\mu)}-\overline{u}_{\mu}|^{\f{2n}{n-2s}}\B)^{\f{n-2s}{2n}}
\\
\leq& C \B(\int_{B^{\ast}(\mu)}y^{1-2s}|\nabla^{\ast}u^{\ast}|^{2}\B)^{1/2},
\ea$$
which means \eqref{akeyinq1}.

By means of \eqref{app2} and \eqref{app3}, we have
$$\ba
\int_{B^{\ast}(\mu)}y^{1-2s}|u^{\ast}|^{2}\leq& C \int_{B^{\ast}(\mu)}y^{1-2s}|u^{\ast}- \overline{u^{\ast}}_{B^{\ast}(\mu)}|^{2}\\& +C
\int_{B^{\ast}(\mu)}y^{1-2s}|\overline{u^{\ast}}_{B^{\ast}(\mu)} -\overline{u}_{\mu} |^{2}
+C\int_{B^{\ast}(\mu)}y^{1-2s}|\overline{u}_{\mu}|^{2} \\
\leq& C \mu^{2}\int_{B^{\ast}(\mu)}y^{1-2s}|\nabla^{\ast}u^{\ast}|^{2}
+C\mu^{2-2s}\int_{B(\mu)}|u|^{2},
\ea$$
which together with \eqref{A.1}  yields
$$\B(\int_{B(\hbar\mu)}|u|
^{\f{2n}{n-2s}}\B)^{\f{n-2s}{n}}
\leq C \int_{B^{\ast}(\mu)}y^{1-2s}|\nabla^{\ast}u^{\ast}|^{2}+C\mu^{-2s}\int_{B(\mu)}|u|^{2},$$
and in turn implies \eqref{akeyinq3}, \eqref{akeyinq2} and \eqref{akeyinq6}.
\end{proof}

\section*{Acknowledgement}
 The third author is supported in part by the National Natural Science Foundation of China under grant No.11101405.


\begin{thebibliography}{00}


\bibitem{[Biskamp]}
D. Biskamp,  {\it Nonlinear Magnetohydrodynamics}. Cambridge University Press, Cambridge (1993).

\bibitem{[CKN]}
L. Caffarelli,  R. Kohn,  L. Nirenberg,  Partial regularity
of suitable weak solutions of Navier-Stokes equation, {\it   Comm. Pure. Appl. Math.}  \textbf{35} (1982) 771--831.

\bibitem{[CS]}
L. Caffarelli, L. Silvestre, An extension problem related to the fractional Laplacian. {\it Comm. Partial Differential Equations},  \textbf{32} (2007) 1245--1260.





\bibitem{[CWY]}
C. Cao, J. Wu and B. Yuan, The 2D incompressible magnetohydrodynamics equations with
only magnetic diffusion,  {\it SIAM J. Math. Anal.} \textbf{46} (2014) 588--602.

\bibitem{[CMZ]}
Q. Chen, C. Miao and Z. Zhang,
 On the regularity criterion of weak solution for the 3D
viscous Magneto-Hydrodynamics Equations, {\it Comm. Math. Phys.} \textbf{284} (2008)  919--930.

\bibitem{[CZ]}
 Z. Chen and X. Zhang,  Heat kernels and analyticity of non-symmetric jump diffusion semigroups.
arXiv:1306.5015v2.



\bibitem{[DL]}
G. Duvaut and J.  Lions, In\'equations en thermo\'elasticit\'e et magn\'etohydrodynami-\\que. {\it Arch. Ration. Mech. Anal. } \textbf{46} (1972) 241--279.










\bibitem{[GKT]}
S. Gustafson, K. Kang and  T. Tsai,  Interior regularity criteria for suitable weak solutions of the Navier-Stokes equations,  {\it  Commun. Math. Phys.}  \textbf{273} (2007)  161--176.

\bibitem{[Hasegawa]}
A. Hasegawa, Self-organization processes in continuous media, {\it Adv. Phys.}  \textbf{34} (1985) 1--42.






\bibitem{[HX1]}C. He and Z. Xin,  On the regularity of weak solutions to the magnetohydrodynamic equations,  {\it J. Differential
Equations.}  \textbf{213} (2005)  235--254.

\bibitem{[HX2]}\bysame,  Partial regularity of suitable weak
solutions to the incompressible magnetohydrodynamic equations,  {\it J. Funct. Anal.} \textbf{227} (2005)  113--152.

\bibitem{[JW1]}
 Q. Jiu and  Y. Wang,  Remarks on partial regularity for suitable weak solutions of the incompressible magnetohydrodynamic equations. {\it  J. Math. Anal. Appl.} \textbf{409} (2014) 1052--1065.

 \bibitem{[JW2]}
\bysame,  On possible time singular points and eventual regularity of weak solutions to the fractional Navier-Stokes equations.  {\it Dynamics of PDE. }\textbf{11} (2014)  321--343.



 \bibitem{[JZ]}
Q. Jiu and  J. Zhao,
 Global regularity of 2D generalized MHD equations with magnetic diffusion. To appear in
Z. Angew. Math. Phys. 2015.

\bibitem{[KP]}N.   Katz and N. Pavlovi\'c,
A cheap Caffarelli-Kohn-Nirenberg inequality for the Navier-Stokes equation with hyper-dissipation, {\it  Geom. Funct. Anal.}
  \textbf{12} (2002) 355--379.

\bibitem{[LS]}O.  Ladyzenskaja and G.  Seregin,  On
partial regularity of suitable weak solutions to the three-dimensional Navier-Stokes equations, {\it  J. Math.
Fluid Mech.} \textbf{1}   (1999)  356--387.


\bibitem{[Lin]}F. Lin,   A new proof of the Caffarelli-Kohn-Nirenberg
Theorem,   {\it  Comm. Pure Appl. Math. } \textbf{51} (1998)  241--257.






\bibitem{[PPS]}
H. Politano, A. Pouquet and P. L. Sulem, Current and vorticity dynamics in three-dimensional
magnetohydrodynamics turbulence, {\it Phys. Plasmas. } \textbf{2 } (1995) 2931--2939.

\bibitem{[RW]}
W. Ren and G. Wu,  Partial regularity for the 3D  Magneto-hydrodynamics system with hyper-dissipation,     to appear in  Acta Math. Sci. Ser. B Engl. Ed. 2015.





\bibitem{[Scheffer1]}
V. Scheffer,   Partial regularity of solutions to the Navier-Stokes
equations,    {\it  Pacific J. Math. } \textbf{66} (1976)  535--552

\bibitem{[Scheffer2]}\bysame,  Hausdorff measure and the Navier-Stokes equations,  {\it  Comm. Math. Phys. } \textbf{55} (1977)  97--112.

\bibitem{[Scheffer3]}
\bysame,  The Navier-Stokes equations in space dimension
four,     {\it  Comm. Math. Phys.}  \textbf{61} (1978)  41--68.
\bibitem{[ST]}
M. Sermange and  R. Teman,  Some mathematical questions related to the MHD equations. {\it Comm. Pure
Appl. Math.} \textbf{36} (1983) 635--664.


\bibitem{[TY1]}
 L. Tang and Y. Yu, Partial regularity of suitable weak solutions to the fractional Navier-Stokes equations. {\it  Comm. Math. Phys.} \textbf{334} (2015), 1455--1482.

\bibitem{[TY2]}
\bysame,  Erratum to: Partial regularity of suitable weak solutions to the fractional Navier-Stokes equations. {\it  Comm. Math. Phys.} \textbf{335} (2015) 1057--1063.

 \bibitem{[TY3]}
\bysame,  Partial H\"older regularity for steady fractional Navier-Stokes equation, (submitted).

\bibitem{[TX]}
G. Tian and Z. Xin, Gradient estimation on Navier-Stokes equations, {\it  Comm. Anal. Geom.}  \textbf{7}  (1999)  221--257.

\bibitem{[TYZ]}
C.   Tran, X. Yu and Z. Zhai, On global regularity of 2D generalized magnetohydrodynamics equations, {\it J. Differential Equations.}  \textbf{254} (2013) 4194--4216.


\bibitem{[Vasseur]}A. Vasseur,    A new proof of partial regularity of
solutions to Navier Stokes equations,  {\it  NoDEA Nonlinear Differential Equations Appl. }  \textbf{14} (2007) 753--785.







 \bibitem{[WW]}Y. Wang and G. Wu,
A unified proof on the partial regularity for  suitable weak solutions of non-stationary and  stationary  Navier-Stokes equations.  {\it  J. Differential Equations,}   \textbf{256} (2014) 1224--1249.


 \bibitem{[WZ]}
W. Wang and Z. Zhang, On the interior regularity criteria for suitable weak solutions of the magnetohydrodynamics equations. {\it SIAM J. Math. Anal.} \textbf{45} (2013) 2666--2677.




 \bibitem{[Wu1]}
J. Wu, Generalized MHD equations, {\it  J. Differential Equations,} \textbf{195} (2003)   284--312.

 \bibitem{[Wu2]}
\bysame, Regularity criteria for the generalized MHD equations, {\it Comm. Partial Differential Equations.}
\textbf{33} (2008)    285--306.

 \bibitem{[Wu3]}
\bysame,
 Global regularity for a class of generalized magnetohydrodynamic equations, {\it  J. Math. Fluid Mech. } \textbf{13} (2011)   295--305.







\end{thebibliography}
\end{document}